\documentclass[12pt,a4paper,reqno]{amsart}
\usepackage[utf8]{inputenc}
\usepackage[english]{babel}
\usepackage[OT1]{fontenc}
\usepackage[T1]{fontenc}
\usepackage{amsmath}
\usepackage{amsfonts}
\usepackage{amsthm}
\usepackage{amssymb}
\usepackage{graphicx}
\usepackage{lmodern}
\usepackage{ bbold }
\usepackage{caption}
\usepackage{comment}
\usepackage{leftindex}
\usepackage{hyperref}
\usepackage{titling}
\usepackage{authblk}
\usepackage{mathabx}
\usepackage{color}
\usepackage{esint}
\usepackage{calrsfs}
\usepackage{mathrsfs}
\usepackage{stmaryrd}
 \usepackage{amsaddr}
\usepackage{array}
\usepackage[misc]{ifsym}
 \usepackage[pagewise]{lineno}
\usepackage[toc,page]{appendix}
\usepackage[left=25mm,right=25mm,top=30mm,bottom=20mm]{geometry}

\usepackage{mathtools, stmaryrd}
\usepackage{xparse} \DeclarePairedDelimiterX{\Iintv}[1]{\llbracket}{\rrbracket}{\iintvargs{#1}}
\NewDocumentCommand{\iintvargs}{>{\SplitArgument{1}{,}}m}
{\iintvargsaux#1} %
\NewDocumentCommand{\iintvargsaux}{mm} {#1\mkern1.5mu,\mkern1.5mu#2}

\newcommand{\mres}{\mathbin{\vrule height 1.6ex depth 0pt width
0.13ex\vrule height 0.13ex depth 0pt width 1.3ex}}

\author{Mehdi Badsi$^1$ \and Bastien Grosse$^1$\textsuperscript{\Letter}}
\address{$^1$Nantes Université, Laboratoire de mathématiques Jean Leray, 2 rue de la Houssinière, 44322 Nantes Cedex 3}

\email{Mehdi.Badsi@univ-nantes.fr, bastien.grosse@univ-nantes.fr\textsuperscript{\Letter}}

\newtheorem{theoreme}{Theorem}
\newtheorem{lemme}{Lemma}
\newtheorem{proposition}{Proposition}

\newtheorem{definition}{Definition}

\title{The plasma-wall dynamic: well-posedness}

\keywords{Vlasov-Poisson system, boundary value problem, plasma sheath, floating potential, BV estimates, energy estimates}
\subjclass{MSC 35A01 , MSC 35A02}

\numberwithin{equation}{section}
\numberwithin{theoreme}{section}
\numberwithin{lemme}{section}

\numberwithin{proposition}{section}
\numberwithin{algo}{section}
\numberwithin{definition}{section}
\numberwithin{remark}{section}

\DeclareMathOperator{\Div}{div}

\begin{document}
\maketitle

\begin{abstract}
    We prove existence and uniqueness in BV for a Vlasov-Poisson boundary value problem which takes into account a nonlinear boundary condition at the boundary on the electric potential. It models the dynamical interaction between a plasma and an electrically isolated metallic wall. The nonlinear boundary condition is the so-called floating potential equation of plasma physics. 
\end{abstract}

\tableofcontents
\section{Introduction}
Plasmas interacting with material boundaries are ubiquitous in applications, be it in the design of laboratory plasma devices such as Tokamaks, or  in the design of measurement devices such as Langmuir probes \cite{Laframboise,Chen,manfredi_2008}. The understanding of the dynamical interaction between a plasma and a material boundary is therefore fundamental. When a plasma interacts with a metallic boundary, the material acts as a sink of particles \cite{Stangeby_2000}. The material charges itself negatively because of the difference in mobility between the heavy ions and the lighter electrons: electrons are prone to exit the plasma core at a higher rate. As this phenomenon alone would result in a depletion of the plasma, the difference of potential between the material and the plasma core is not arbitrary:  it adjusts itself according to the particle flux that crosses the material boundary. This is  called the floating potential equation (see \cite{Stangeby_2000} section 2.6). When the plasma reaches an equilibrium, the value of the potential at the wall is determined in such a way that  the flux of charges at the material boundary vanishes. This whole mechanism  is called the Debye sheath. Though it is a very active topic of research in plasma physics, the dynamical behavior of the plasma sheath has not been investigated that much in the mathematical community. Some fluids or kinetics models describing the plasma sheath have been studied \cite{GVHKR,Slemrod,SUZUKI2025134743}. They nevertheless do not take into account the floating potential equation which results in an additional nonlinearity in the study of the well-posedness of the problem. The construction of the stationary sheath  solutions \cite{badsi_krm_mybib} along with the study of its temporal stability in a simplified setting \cite{Badsi-25_mybib} were done by Badsi. In this work, we consider the dynamic of the plasma-wall interaction when the potential at the wall satisfies a nonlinear self-consistent equation. We thus model a one dimensional, noncollisional and unmagnetized plasma contained in a chamber of size $L > 0$ made of two species of opposite charges.  Ions  have mass $m{+} > 0$ and charge $q^{+} > 0$ while electrons have mass $m^{-} > 0$ and charge $q^{-} < 0$. Index $+$ is reserved for ions and $-$ is reserved for electrons, and we will use $\pm$ when an expression has to be understood for both indices $+$ and $-$.  Each species of particles is described through its distribution function $f^\pm$ in the phase space $\mathcal{O}:=(0,L)\times\mathbb{R}$. The latter  satisfies the  Vlasov equation: 

\begin{equation} 
\partial_t f^{\pm} + v \partial_x f^{\pm} -\gamma_{\pm} \partial_x \phi \partial_v f^{\pm} = 0 \: \quad \textnormal{ in } \Omega_{T}. \label{m_vlasov+}
\end{equation}

Here, $\Omega_{T} := (0,T) \times \mathcal{O}$ where $T > 0$ is a fixed horizon of time and $\gamma_{\pm} = \frac{q^{\pm}}{m_{\pm}}$ is the charge-to-mass ratio. The self-consistent electric potential $\phi$  obeys the Poisson equation

\begin{equation}
    -\varepsilon_{0} \partial_{xx}\phi = q_+\rho^+ +q_- \rho^-  \textnormal{ in } [0,T)\times (0,L),\label{m_poisson}
\end{equation}

\noindent where $\epsilon_{0}> 0$ is the permittivity,  and

\begin{equation}
    \rho^{\pm}(t,x) = \int_{\mathbb{R}} f^{\pm}(t,x,v) dv \label{m_charge_densities}
\end{equation}

\noindent are the macroscopic densities. To model the interaction between the  plasma   and a metallic wall, we assume that $x= 0$ (the plasma core)  is a source  that injects particles, namely for $t \in (0,T)$

\begin{equation}
    f^{\pm}(t,0,v) = g^{\pm}(t,v) \textnormal{ if } v > 0, \label{m_inject_bc}
\end{equation}

\noindent where $g^{\pm}$  are given nonnegative functions. At $x = L$ (the wall), particles are absorbed, namely for $t \in (0,T)$

 \begin{equation}
    f^{\pm}(t,L,v) = 0 \textnormal{ if } v < 0. \label{m_absorb_bc}
\end{equation} 

As for the electric potential $\phi$, the plasma core at $x =0$ is chosen as the reference of potential and is set to zero, while at the wall the value of the potential adjusts itself according to the  charge current crossing the domain $(0,L)$, that is for $t \in [0,T)$

\begin{equation}
    \phi(t,0) = 0 , \quad \phi(t,L) = \beta + \dfrac{1}{\epsilon_{0}} \int_0^t \int_0^L (q_+j^++q_-j^-)(\tau,x)dx d\tau, \label{m_potential_bc}
\end{equation}

\noindent where $\beta \in \mathbb{R}$ is the initial  value of the potential at the wall, and

\begin{equation}
    j^{\pm}(t,x) = \int_{\mathbb{R}} f^{\pm}(t,x,v) v dv \label{m_current_densities}
\end{equation}

\noindent are the current densities of each species. The system is supplemented with the initial datum

\begin{equation}
    f^{\pm}(0,x,v) = \tilde{f}^{\pm}(x,v), \quad (x,v) \in \mathcal{O} \label{m_initial_datum}
\end{equation}

\noindent where $\tilde{f}^{\pm}$  are given  initial distribution functions.  The set of equations \eqref{m_vlasov+}-\eqref{m_initial_datum} will be referred to as the Vlasov-Poisson-Ampère system (VPA).

 The main novelty is the non linear boundary condition \eqref{m_potential_bc} on the potential which states that the potential at the wall evolves freely in response to the current of incoming particles. Because of the boundary conditions \eqref{m_inject_bc}, \eqref{m_absorb_bc}, the solutions of the Vlasov equations are rarely classical \cite{guo_1994}. They are only of bounded variations. Our main result establish the well-posedness of (VPA) with BV regularity and bounded velocity moments for the distribution functions under assumptions on the data which are reminiscent of the ones made in previous works \cite{Guo,guo_1994,Filbet-Guo}, though we do not assume  compactly supported data in velocity.  The proof proceeds in three main steps. We first establish the BV regularity of solutions to the Vlasov equation supplemented with the boundary conditions \eqref{m_inject_bc} and \eqref{m_absorb_bc}. To this end, we employ a finite-volume upwind scheme and closely follow the approach developed in \cite{aguillon}. Since the velocity variable is unbounded, several modifications are required, including the choice of a suitable CFL condition and the derivation of uniform bounds on the velocity moments. The second step is based on a fixed-point argument for the electric field. We prove the existence and uniqueness of solutions to a standard Vlasov--Poisson system with a prescribed, time-dependent potential at $x=L$ (see Section \ref{sec:Vlasov-Poisson}). Although this part essentially follows the method of \cite{Guo}, we reproduce each estimate as they crucially depend on the wall potential. The third and final step relies on a fixed-point argument for the wall potential $\phi(t,L)$. We first prove the existence and uniqueness of a fixed point, and hence of a solution to the (VPA) system, on a sufficiently short time interval. We then derive a uniform bound on the wall potential, allowing us to extend the local solution to arbitrary times. This extension is made possible by an energy estimate. The key observation is that the floating-potential boundary condition \eqref{m_potential_bc} is precisely the condition that allows the total energy of the system to be controlled in terms of the initial energy and the energy carried by the incoming particles. Finally, we show that the total energy provides an upper bound for any potential difference and thus yields a bound on the wall potential.

The paper is organized as follows. We begin by introducing the notation, recalling the necessary results on functions of bounded variation, and stating our main theorem. In Section \ref{sec:linear-Vlasov}, we derive BV estimates for the linear Vlasov equation in a bounded domain. In Section \ref{sec:Vlasov-Poisson}, we investigate the Vlasov--Poisson system with Dirichlet boundary conditions imposed on the electrostatic potential. In particular, we establish the propagation of both moment and BV estimates for the distribution function. Finally, Section \ref{sec:Vlasov-Poisson-floating} is devoted to the proof of our main result.
\section{Notations, functions of bounded variation and main result}

  \label{sec:BV_space}

We will intensively use the notations for the following sets, with $T\in\mathbb{R}^{+\star}$: $\mathcal{O}=(0,L)\times\mathbb{R}$, $\Omega_T=(0,T)\times\mathcal{O}$ and $Q_T = [0,T]\times[0,L]$. In  the sequel, we will  often rely on a cutoff function $\chi_R$ built as following. First,  we shall select a smooth, non-decreasing  function  $\alpha : \mathbb{R}\mapsto [0,1]$  such that $\alpha(x)=0$ if $x\leq 0$ and $\alpha(x)=1$ if $x\geq 1$. For $R > 0$, we then define the cutoff function $\chi_R$ by 
 $$\chi_R(x) =
\left\lbrace \begin{array}{ccc}
    \alpha(x+R+1)  & \mbox{ if } x \ \leq 0,  \\
     \alpha(-x+R+1 ) & \mbox{ if } x \ \geq 0.
 \end{array}\right.
 $$

 The $d$-dimensional Hausdorff measure is denoted by $\mathcal{H}^d$. For a given positive measure $\mu$ on $\mathbb{R}^d$, the mean of a $\mu$-measurable function $f$ on the	  $\mu$-measurable set $A$ of finite measure  is defined by

 \begin{equation}
 \fint_A f d\mu = \frac{1}{\mu(A)} \int_A f d\mu,
\end{equation}  

\noindent whenever the last integral makes sense. With a slight abuse of notation, if $f:\Omega_T\mapsto \mathbb{R}$ is a real-valued function,  we denote by $f(t)$ its restriction to the hyperplane $\{t\}\times\mathcal{O}$:

$$
\begin{array}{cccc}
f(t):&(x,v)&\mapsto &f(t,x,v) \\
     & \mathcal{O}&\mapsto& \mathbb{R}
\end{array}
$$

 For an open subset $U\subset \mathbb{R}^{n}$, the space of bounded variation functions on $U$ is denoted $BV(U).$ Here is a first definition of this space.

\begin{definition}[\cite{evansgariepy}, section 5.1]
    Let $U$ an open subset of $\mathbb{R}^n$. Let $f\in L^1 (U)$. We say that $f$ is of bounded variations if the total variation $TV(f,U)$ of $f$ in $U$ is finite:

    $$TV(f,U) :=\sup\left\{  \int_{U} f \Div\varphi dx \  | \ \ \varphi\in (C_{0}^{\infty}(U))^n, \|\varphi\|_{L^{\infty}(U)}\leq 1 \right\} <\infty. $$

     The space of functions of bounded variations is noted $BV(U)$ and can be endowed with the norm $\| f \|_{BV(U)} = \|f \|_{L^1(U)} + TV(f,U)$.
     
     The space $BV_{loc}(U)$ contains the functions which belong to $BV(V)$ for every open set $V$ such that $V\subset U$.
\end{definition}

The set of bounded Radon measures on an open set $U\subset \mathbb{R}^n$ is noted $\mathcal{M}(U)$. An  equivalent definition for the space $BV(U)$ is the following:

\begin{definition}[\cite{ambrosio}, pages 117-120]
    Let $U\subset \mathbb{R}^n$ be an open set. A function $f$ lies in  $BV (U)$ if there exist $n$ bounded Radon measures $\mu_{1},\mu_2,...,\mu_n$ such that for all $0\leq i\leq n$ and $\varphi\in C_{0}^{\infty}(U)$, 

    $$
    \int_{U} f(x) \partial_{x_i}\varphi(x) dx = -\int_{U} \varphi(x) d\mu_i (x).
    $$
    
    We note $ \partial_{x_i}f :=\mu_i $, and the distributional gradient is  $Df = (\partial_{x_1} f,...,\partial_{x_n} f)$. Finally,  the total variation of $TV(f,U)$ equals the total variation of the measure $Df$, in other words $|Df|(U) = TV(f,U)$.
    
\end{definition}

An important property of the space $BV_{loc}$ is a compactness property.

\begin{theoreme}[\cite{ambrosio}, theorem 3.23]
    Every family of functions $(f_h)_{h>0}\in BV_{loc}(U)$ such that 

    $$
    \sup_{h>0} \left\{ \int_A f_h dx + |Df_h|(A) \ \ \lvert \ A \subset U, A \ open\right\} <\infty
    $$
    admits a subsequence $(f_{h_{n}})_{n\in\mathbb{N}}$ which converges in $L^1_{loc}(U)$ toward $f\in BV_{loc}(U)$.
    \label{theoreme/compacite}
\end{theoreme}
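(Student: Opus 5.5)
The statement is the classical local compactness theorem in $BV$; the hypothesis should be read as a uniform bound on $\int_A |f_h|\,dx + |Df_h|(A)$ for every open $A$ compactly contained in $U$, with the bound allowed to depend on $A$. The plan is to reduce to compactness in $W^{1,1}$ by mollification, use the Fréchet--Kolmogorov (Riesz) criterion on each compact piece, extract a diagonal subsequence over an exhaustion of $U$, and finally use lower semicontinuity of the total variation to place the limit in $BV_{loc}(U)$.

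\textbf{Step 1: reduction to a fixed compact piece.} Fix an exhaustion $V_1 \Subset V_2 \Subset \dots$ of $U$ by open sets with $\bigcup_k V_k = U$. It suffices to show that for each $k$ the family $(f_h)$ is precompact in $L^1(V_k)$. A Cantor diagonal extraction then yields a single subsequence $(f_{h_n})$ converging in $L^1(V_k)$ for every $k$, hence in $L^1_{loc}(U)$, to some $f$.

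\textbf{Step 2: uniform control of translations.} Choose $\delta_k>0$ with $V_k + B(0,\delta_k) \subset V_{k+1}$. For $|y|<\delta_k$ I will prove
\[
\int_{V_k} |f_h(x+y)-f_h(x)|\,dx \le |y|\,|Df_h|(V_{k+1}).
\]
First I establish this for smooth functions, using the fundamental theorem of calculus along segments and Fubini. Then I transfer it to $BV$ functions by mollification. For this I use the standard facts that $f_h*\rho_\varepsilon \to f_h$ in $L^1(V_k)$ and that $\int_{V_k}|\nabla(f_h*\rho_\varepsilon)| \le |Df_h|(V_{k+1})$ for $\varepsilon<\delta_k$. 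The second fact follows directly from the duality definition of total variation given above: test $f_h$ against the field $\varphi*\rho_\varepsilon$. Combined with the uniform bound on $\|f_h\|_{L^1(V_{k+1})}$, the translations are equicontinuous in $L^1(V_k)$. The Fréchet--Kolmogorov theorem then gives precompactness in $L^1(V_k)$, since $V_k$ is bounded and so no tightness at infinity is needed.

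\textbf{Step 3: the limit is in $BV_{loc}$.} For any open $A \Subset U$ and any $\varphi\in C_0^\infty(A)^n$ with $\|\varphi\|_\infty\le1$, the $L^1_{loc}$ convergence gives $\int_A f\Div\varphi = \lim_n \int_A f_{h_n}\Div\varphi \le \sup_h |Df_h|(A)$. Taking the supremum over $\varphi$ shows $TV(f,A)<\infty$, and likewise $\|f\|_{L^1(A)}$ is finite, so $f\in BV_{loc}(U)$.

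The main obstacle is Step 2: passing the translation estimate from smooth functions to general $BV$ functions, which requires the mollification inequality for the total variation. Everything else is routine. Alternatively, this step can simply be quoted from \cite{ambrosio} or \cite{evansgariepy}, as the paper does.
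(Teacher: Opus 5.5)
Your proof is correct and is the standard argument behind \cite{ambrosio}, Theorem 3.23: mollification and the translation estimate, Fr\'echet--Kolmogorov on each piece of an exhaustion, diagonal extraction, then lower semicontinuity of the total variation. The paper only quotes that theorem without proof, and your reading of the hypothesis (with $|f_h|$ and a bound for each open $A$ compactly contained in $U$) is the intended one; it is implied by the paper's condition for the nonnegative families to which the theorem is applied. One bookkeeping point in Step 2: applying the smooth estimate to $f_h*\rho_\varepsilon$ uses $\int_{V_k+B(0,|y|)}|\nabla(f_h*\rho_\varepsilon)|$ and the convergence $f_h*\rho_\varepsilon\to f_h$ in $L^1(V_k+B(0,|y|))$, not merely on $V_k$; the former is still bounded by $|Df_h|(V_{k+1})$ once $\varepsilon<\delta_k-|y|$, so the conclusion is unaffected.
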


By definition, BV functions satisfy an integration by parts formula when tested against smooth vector fields  vanishing at the boundary. In fact, this formula can be generalized for $C^1$ vector fields, and even define the trace of a BV function on the boundary.

\begin{theoreme}[\cite{evansgariepy}, section 5.3] \label{divergence_formula}
    Let $U\subset\mathbb{R}^n$ be an open, bounded set with $\partial U$ lipschitz. Let  $\overrightarrow{n}$ be the normal  on $\partial U$.  Then there exists a unique bounded linear operator $T : BV(U) \mapsto L^1(\partial U,d\mathcal{H}^{n-1})$ such that the following equality holds for all $\varphi\in (C^1(U))^n$:

    \begin{equation}
        \int_U f \Div\varphi dx = -\int_U \varphi \cdot d(Df) + \int_{\partial U} (\varphi \cdot \overrightarrow{n}) Tf d\mathcal{H}^{n-1} .    \label{equation/green}
    \end{equation}

    $Tf$ is called the trace of $f$ on $\partial U$. 
\end{theoreme}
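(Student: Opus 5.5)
The plan is to build the trace operator by approximation, starting from the smooth case. For $f\in C^1(\overline U)$ I set $Tf=f|_{\partial U}$. Then \eqref{equation/green} is the classical Gauss--Green formula applied to the vector field $f\varphi$ on a Lipschitz domain, since $\Div(f\varphi)=f\Div\varphi+\varphi\cdot\nabla f$ and $Df=\nabla f\,dx$ in that case. Uniqueness is immediate in general. If two operators $T_1,T_2$ both satisfy \eqref{equation/green}, subtracting gives $\int_{\partial U}(\varphi\cdot\overrightarrow{n})(T_1f-T_2f)\,d\mathcal H^{n-1}=0$ for all $\varphi\in C^1$. Taking $\varphi=\psi\,\widetilde{n}$, where $\widetilde n$ is a continuous extension of the normal chosen locally, and $\psi$ ranges over a dense family, shows $T_1f=T_2f$ $\mathcal H^{n-1}$-a.e. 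On a Lipschitz boundary the normal is only $L^\infty$, so I will instead use the local graph representation: there $\overrightarrow n$ has a component bounded below, and I test with $\varphi=\psi e_n$.

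The central step is a boundary estimate in the smooth case:
\begin{equation*}
\int_{\partial U}|f|\,d\mathcal H^{n-1}\le C\big(\|f\|_{L^1(U)}+|Df|(U)\big).
\end{equation*}
I will prove it by localizing with a partition of unity. Near a boundary point, $U$ is the region above the graph $x_n>\gamma(x')$ with $\gamma$ Lipschitz. For small $\varepsilon$ I write
\[
f(x',\gamma(x'))=f(x',\gamma(x')+s)-\int_0^s\partial_n f(x',\gamma(x')+r)\,dr,
\]
average in $s\in(0,\varepsilon)$, and integrate in $x'$. Since $d\mathcal H^{n-1}\le\sqrt{1+\mathrm{Lip}(\gamma)^2}\,dx'$, this gives the bound. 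A sharper version of the same computation gives
\[
\int_{\partial U}|f|\le C\varepsilon^{-1}\|f\|_{L^1(U_\varepsilon)}+C|Df|(U_\varepsilon),
\]
where $U_\varepsilon$ is an $\varepsilon$-collar of the boundary.

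Next comes the passage to general $f\in BV(U)$. By the Anzellotti--Giaquinta approximation, there are $f_k\in C^\infty(U)\cap BV(U)$ with $f_k\to f$ in $L^1$ and $|Df_k|(U)\to|Df|(U)$ (strict convergence). The main obstacle is here. The $f_k$ are smooth only in the interior, and strict convergence does not give $|D(f_k-f_j)|(U)\to0$, so the global estimate alone does not show that $(Tf_k)$ is Cauchy. I would handle this with the collar version of the estimate. Strict convergence implies that $|Df_k|\rightharpoonup|Df|$ weakly as measures and that no mass concentrates. Hence
\[
\limsup_k|Df_k|(U_\varepsilon)\le|Df|(\overline{U_\varepsilon}\cap U)\to0\quad\text{as }\varepsilon\to0.
\]
Fix $\varepsilon$. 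Then $\|f_k-f_j\|_{L^1(U_\varepsilon)}\to0$, while the $|D(f_k-f_j)|(U_\varepsilon)$ term is small uniformly in $k,j$. Applying the collar estimate to $f_k-f_j$ restricted near the boundary (which requires defining the trace of $f_k$ via one-sided limits along the graph direction, legitimate since $f_k$ is smooth inside and $W^{1,1}$ near the boundary) shows that $(Tf_k)$ is Cauchy in $L^1(\partial U)$.

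I define $Tf$ as the limit. The formula \eqref{equation/green} passes to the limit because $\int f_k\Div\varphi\to\int f\Div\varphi$ and $\int\varphi\cdot\nabla f_k\to\int\varphi\cdot dDf$. The second convergence holds since $\varphi$ is continuous up to the boundary and bounded, and strict convergence yields weak convergence of the vector measures tested against $C_b$ functions. Well-definedness, meaning independence of the approximating sequence, follows from uniqueness. Linearity is clear, and boundedness follows from passing the global estimate to the limit:
\[
\|Tf\|_{L^1(\partial U)}\le C\|f\|_{BV(U)}.
\]
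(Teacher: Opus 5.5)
Your argument is correct and is essentially the standard proof of Evans--Gariepy, Section 5.3, which the paper cites rather than reproves. Both use localization to a Lipschitz graph, the vertical collar estimate $\int_{\partial U}|f|\le C\varepsilon^{-1}\|f\|_{L^1(U_\varepsilon)}+C|Df|(U_\varepsilon)$, and Anzellotti--Giaquinta strict approximation. Both then use upper semicontinuity of $|Df_k|$ on the relatively closed collar to make $(Tf_k)$ Cauchy, and pass to the limit in the Green formula.

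The only organizational difference is the base case. Evans--Gariepy start directly from $f\in BV(U)\cap C^\infty(U)$, defining $Tf$ as the $L^1$-limit of $f(x',\gamma(x')+\varepsilon)$. They obtain the Green formula by applying the classical one on inward-shifted domains and letting $\varepsilon\to0$. That is precisely the step you only flag parenthetically, and it is the one you would need to write out to close your argument.
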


The trace $Tf$ admits an expression in terms of local means of $f$ with respect to the Lebesgue measure, as shown by the following result:

\begin{theoreme}[\cite{evansgariepy}, theorem 2 p.181]

If $U$ is open, bounded with Lipschitz boundary, then for almost all $u\in \partial U$, 

$$
 Tf(u) = \lim\limits_{R\to 0^+} \fint_{U\cap B(u,R)} f(y) dy.
$$
\label{theoreme/trace}

\end{theoreme}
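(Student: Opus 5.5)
The plan is to derive the trace formula from the Green formula \eqref{equation/green} of Theorem \ref{divergence_formula}, combined with a blow-up argument at boundary points. The argument has four steps.

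\emph{Step 1: localize.} Since $\partial U$ is Lipschitz, we cover it by finitely many cylinders. In each cylinder, after a rotation, $U$ is the region above the graph $x_n > \gamma(x')$ of a Lipschitz function $\gamma$. The statement is local, so it suffices to prove it for $\mathcal{H}^{n-1}$-a.e.\ $u$ in one such patch.

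\emph{Step 2: an integral identity for cylinders.} Apply \eqref{equation/green} with the vector field $\varphi = \eta\, e_n$, where $\eta$ is a cutoff supported near the boundary. Letting the cutoff concentrate in the $x_n$-direction gives
\[
\Big|\int_{B(u,R)\cap\partial U} Tf\, d\mathcal{H}^{n-1} - \int_{B(u,R)\cap\partial U} f(x',\gamma(x')+\varepsilon)\, d\mathcal{H}^{n-1}\Big| \lesssim |Df|(C_\varepsilon(u,R)),
\]
where $C_\varepsilon(u,R)$ is the thin region of width $\varepsilon$ above the graph, restricted to $B(u,R)$. This identity is the standard one used to construct the trace. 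Averaging it over $\varepsilon\in(0,R)$, and using that $\gamma$ is Lipschitz, yields
\[
\fint_{U\cap B(u,R)} |f - Tf(u)|\,dy \;\lesssim\; \fint_{\partial U\cap B(u,CR)} |Tf - Tf(u)|\,d\mathcal{H}^{n-1} \;+\; \frac{|Df|(U\cap B(u,CR))}{R^{n-1}}.
\]
One point needs care here. To get $|f-Tf(u)|$ rather than $f-Tf(u)$ on the left, we apply the identity to $f - Tf(u)$ and then use the coarea formula for BV functions. The main obstacle is making this step rigorous. The cleanest route is to first prove the bound for smooth approximations $f_k \to f$ in the strict topology (obtained by Anzellotti--Giaquinta approximation, with traces converging in $L^1$). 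Then pass to the limit, using continuity of $T$ and lower semicontinuity of $|Df|$.

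\emph{Step 3: both terms vanish almost everywhere.} For the first term, $Tf\in L^1(\partial U,\mathcal{H}^{n-1})$ by Theorem \ref{divergence_formula}, and $\partial U$ is locally bi-Lipschitz to $\mathbb{R}^{n-1}$. The Lebesgue differentiation theorem therefore makes the first term tend to $0$ for $\mathcal{H}^{n-1}$-a.e.\ $u$. For the second term, the measure $|Df|$ is finite. A standard density theorem (Evans--Gariepy, Section 2.4) states that
\[
\limsup_{R\to0} \frac{|Df|(B(u,R)\cap U)}{R^{n-1}} = 0
\]
for $\mathcal{H}^{n-1}$-a.e.\ $u\in\partial U$. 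This holds because $|Df|(\partial U)$ is not charged, since $\partial U$ lies outside the open set $U$ where $|Df|$ lives. The density theorem gives the vanishing off a set of $\mathcal{H}^{n-1}$-measure zero.

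\emph{Step 4: conclude.} Combining Steps 2 and 3, the average $\fint_{U\cap B(u,R)} |f-Tf(u)|$ tends to $0$ for a.e.\ $u$, which in particular gives $\fint_{U\cap B(u,R)} f\,dy \to Tf(u)$. We also note that $\mathcal{L}^n(U\cap B(u,R))\gtrsim R^n$, by the Lipschitz cone condition. This ensures that the normalizations in the averages are consistent.
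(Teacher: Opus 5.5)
Your proposal is correct and is essentially the standard argument of \cite{evansgariepy} (Section 5.3, Theorem 2). The paper itself gives no proof and only cites that result. The argument chooses $u$ that is both an $\mathcal{H}^{n-1}$-Lebesgue point of $Tf$ on $\partial U$ and a point where $|Df|(B(u,R)\cap U)/R^{n-1}\to 0$, by the density theorem applied to $|Df|\mres U$, which does not charge $\partial U$. It then combines these with the thin-layer estimate $\int_{C_\varepsilon}|f-Tf(y',\gamma(y'))|\,dy\lesssim \varepsilon\,|Df|(C_\varepsilon)$ from the construction of the trace.

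Two small cleanups are needed. First, the absolute value inside the integral comes directly from the pointwise fundamental theorem of calculus in $y_n$ for the smooth strict approximants, so no coarea formula is needed. Second, passing to the limit uses $\limsup_k|Df_k|(F)\le|Df|(F)$ on relatively closed $F\subset U$. This follows from lower semicontinuity on $U\setminus F$ together with convergence of the total variations, not from lower semicontinuity alone.
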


Theorems \ref{divergence_formula} and \ref{theoreme/trace} are stated for bounded domains but they also admit an extension to our setting on the unbounded domain $\Omega_T$ and for special test fields. The proof of the following result is the appendix.
\begin{theoreme} \label{theoreme:trace}
Fix $p\in\mathbb{N}$ and consider a nonnegative function $f\in L^\infty(\Omega_T) $ such that $|v|^kf \in BV(\Omega_T)$ for all $k\in\Iintv{0,p}$. Then $T(|v|^k f) = |v|^k Tf\in L^1(\partial\Omega_T)$, with the trace $Tf$ defined as in Theorem \ref{theoreme/trace}.  Moreover, we have that for all $(\phi_1,\phi_2)\in C^\infty(Q_T)$,

\begin{multline}
\int_{\Omega_T} f v^k  \Div(\phi_1,\phi_2,0) dtdxdv = \int_{\partial\Omega_T} Tf v^k(\phi_1,\phi_2,0)\cdot \overrightarrow{n} d\mathcal{H}^2\\-\int_{\Omega_T} v^k(\phi_1,\phi_2,0)\cdot d(Df).
\end{multline}

\label{theoreme/regularite_trace}
\end{theoreme}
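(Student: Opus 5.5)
The plan is to reduce to the bounded-domain results, Theorems \ref{divergence_formula} and \ref{theoreme/trace}, by truncating in velocity with the cutoff $\chi_R$. The limit $R\to\infty$ will then be handled with the integrability $|v|^k f\in BV(\Omega_T)$ for $k\le p$.

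First I treat the trace identity. For $R>0$ set $U_R=(0,T)\times(0,L)\times(-R-2,R+2)$, which is bounded with Lipschitz boundary. On $U_R$, $f$ and $|v|^kf$ are in $BV$. The function $|v|^k$ is Lipschitz and bounded on $\overline{U_R}$, and it is continuous up to the boundary. The BV chain rule for a product with a Lipschitz function gives $D(|v|^kf)=|v|^kDf + f\,D(|v|^k)$ on $U_R$, and by Theorem \ref{theoreme/trace} the trace of $|v|^kf$ at $u\in\partial U_R$ is
\[
T(|v|^kf)(u)=\lim_{r\to0}\fint_{U_R\cap B(u,r)}|v|^kf\,dy .
\]
I then write $|v|^k f = |v_u|^k f + (|v|^k-|v_u|^k)f$, where $v_u$ is the velocity coordinate of $u$. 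The second term has mean bounded by $\|f\|_{L^\infty}\cdot\sup_{B(u,r)}\bigl||v|^k-|v_u|^k\bigr|\to0$. Hence $T(|v|^kf)(u)=|v_u|^kTf(u)$ for a.e. $u$ on the part of $\partial\Omega_T$ lying in $\partial U_R$. Since traces are local (a point of $\partial\Omega_T$ with $|v|<R$ has a neighbourhood in which $\Omega_T$ and $U_R$ coincide), letting $R\to\infty$ gives $T(|v|^kf)=|v|^kTf$ a.e. on $\partial\Omega_T$. Membership in $L^1(\partial\Omega_T)$ should come from a uniform trace bound: the trace inequality on slabs $(0,T)\times(0,L)\times(n,n+1)$ has constants independent of $n$ by translation invariance, so summing gives $\|T(|v|^kf)\|_{L^1(\partial\Omega_T)}\le C\|\,|v|^kf\|_{BV(\Omega_T)}$.

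For the Green formula I apply \eqref{equation/green} on $U_R$ to the field $\varphi_R=\chi_R(v)\,v^k(\phi_1,\phi_2,0)$. It is $C^1$, and it vanishes near $v=\pm(R+2)$, so only the faces $t\in\{0,T\}$ and $x\in\{0,L\}$ contribute. Because the third component is zero, $\Div\varphi_R=\chi_R v^k(\partial_t\phi_1+\partial_x\phi_2)$, with no derivative falling on $\chi_R$. The identity becomes
\[
\int f\chi_R v^k\,\Div(\phi_1,\phi_2,0)=\int_{\partial\Omega_T}Tf\,\chi_Rv^k(\phi_1,\phi_2,0)\cdot\vec n\,d\mathcal H^2-\int\chi_Rv^k(\phi_1,\phi_2,0)\cdot d(Df).
\]
I then let $R\to\infty$ by dominated convergence, term by term. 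The left side is dominated by $|v|^kf\,\|\nabla\phi\|_\infty\in L^1$. The boundary term is dominated by $|v|^k|Tf|\,\|\phi\|_\infty\in L^1(\partial\Omega_T)$, using the first part.

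The main obstacle is the measure term: I need $|v|^k$ to be integrable against $|Df|$ on $\Omega_T$. I will obtain this from the product rule $|v|^k\,Df = D(|v|^kf) - f\,D(|v|^k)$, taken in the measure sense on each $U_R$. The measure $D(|v|^kf)$ is finite because $|v|^kf\in BV$. The term $f\,D(|v|^k)=k\,|v|^{k-2}v f\,dy$ (for $k\ge1$) is controlled by $|v|^{k-1}f\in L^1$, which holds by hypothesis with $k-1\in\Iintv{0,p}$. This bounds $\int|v|^k\,d|Df|$ uniformly in $R$, so dominated convergence applies to the last term as well and the formula follows.
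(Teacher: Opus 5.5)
Your proof is correct, and its skeleton is the paper's. You get $T(|v|^kf)=|v|^kTf$ from local means and the bound $\|f\|_{L^\infty}\sup_{B(u,r)}\bigl||v|^k-|v_u|^k\bigr|\to 0$. You then apply the Green formula on a bounded box to the field $\chi_R(v)\,v^k(\phi_1,\phi_2,0)$ and let $R\to\infty$ by dominated convergence. The paper instead uses smooth approximations $\psi_n$ of $\psi(v)$, which you can skip since $v^k$ is smooth.

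Two steps are done differently.

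\textbf{Integrability of the trace.} For $|v|^kTf\in L^1(\partial\Omega_T)$ the paper does not invoke the abstract boundedness of the trace operator. It inserts a linear field, e.g. $\phi_1=0$, $\phi_2(x)=x$, into the truncated Green formula. This isolates the face $x=L$ and bounds $\int_0^T\int_{\mathbb{R}}|v|^k\chi_R\,Tf(t,L,v)\,dv\,dt$ uniformly in $R$ by $\|v^kf\|_{L^1(\Omega_T)}$ and $|\partial_x(|v|^kf)|(\Omega_T)$. Monotone convergence then concludes, and the other faces are handled the same way. Your decomposition into unit slabs $(0,T)\times(0,L)\times(n,n+1)$ with translation-invariant trace constants is equally valid and shorter. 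The paper's version is more explicit and uses only the derivative normal to each face.

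\textbf{Measure term.} The paper notes that the field has no $v$-component, so $v^k\partial_tf=\partial_t(v^kf)$ and $v^k\partial_xf=\partial_x(v^kf)$ are bounded measures. Nothing beyond $|v|^kf\in BV(\Omega_T)$ is needed. Your Leibniz-rule argument proves the stronger fact $\int_{\Omega_T}|v|^k\,d|Df|<\infty$, including the $\partial_vf$ component. This costs the extra input $|v|^{k-1}f\in L^1(\Omega_T)$, which the hypotheses provide. It is more than this statement requires, but it is the bound one would need for test fields with a nonzero $v$-component.
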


The derivative $Du$ of a  function $u\in BV(\Omega_T)$ can be decomposed as the sum of the diffuse part $\tilde{D}u$ and of the jump part $D_{jump} u$. More precisely, one can define a jump set $J_u$, a normal vector field $\overrightarrow{n}_u$ which orients $J_u$, and values $u_+$ and $u_-$ which are limits of means taken on half-balls on both sides of $J_u$. The jump part takes the form 

$$
D_{jump} u = (u_+ - u_-) \overrightarrow{n}_u \otimes \mathcal{H}^2\mres J_u.
$$

The diffuse part $\tilde{D}u$ is the sum of an absolutely continuous part with respect to the Lebesgue measure, and of a Cantor part. We will not need such degree of detail and refer to \cite{evansgariepy,ambrosio} for details.  We say that $u$ has an approximate limit at  $x\in U$,  if there exists a real number  $\tilde{u}(x)$  such that

	$$
	\lim\limits_{r\to 0^+ }\fint_{B(x,r)} |u(y)- \overline{u}(x)  | dy .
	$$ 
    
    If it exists, it is unique. The approximate limit exists almost everywhere in $U$ for functions of bounded variations. The following chain rule for BV functions expresses the derivative of $f\circ u$, when $f$ is Lipschitz continuous 
\begin{theoreme}[\cite{ambrosio}, theorem 3.99] \label{m_chain_rule}
    Let $u\in BV(U)$ and $f:\mathbb{R}\mapsto\mathbb{R}$ be a Lipschitz continuous function satisfying $f(0)=0$ if $|U|=\infty$. Then $v= f\circ u$ belongs to $BV(U)$ and the derivative of $v$ can be decomposed as the sum  $Dv = \tilde{D}v + D_{jump} v$, where

    $$
    \tilde{D}v = f'(\overline{u})\tilde{D}u,
    $$

    $$
    D_{jump} v = \dfrac{f(u_+)-f(u_-)}{u_+-u_-} D_{jump}u.
    $$
    
	The derivative of $f$ has to be understood in the classical sense, and exists almost everywhere for Lipschitz continuous functions.    
    \label{theoreme/chainrule}
\end{theoreme}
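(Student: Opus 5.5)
Since this is a classical result quoted from \cite{ambrosio}, I would follow the standard route: first prove that $v=f\circ u$ lies in $BV(U)$ with a crude bound, then identify $Dv$ on three disjoint pieces of $U$. These are the jump set $J_u$, the set $S_u\setminus J_u$, which is $\mathcal{H}^{n-1}$-negligible, and the set of approximate continuity points, where the diffuse part lives.

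\emph{Membership in $BV$.} Take $u_h\in C^\infty(U)$ with $u_h\to u$ in $L^1$ and $\int_U|\nabla u_h|\to|Du|(U)$ (Anzellotti--Giaquinta strict approximation). Then $f\circ u_h\to f\circ u$ in $L^1$, using $|f(s)|\le \mathrm{Lip}(f)|s|$ when $|U|=\infty$; this is where $f(0)=0$ enters. For $f\in C^1$ we have $|\nabla(f\circ u_h)|\le \mathrm{Lip}(f)|\nabla u_h|$. For a general Lipschitz $f$, I would first mollify $f$. Lower semicontinuity of the total variation then gives $v\in BV(U)$ and $|Dv|\le \mathrm{Lip}(f)\,|Du|$ as measures. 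In particular, $|Dv|$ vanishes wherever $|Du|$ does, so both $\tilde Dv\ll|\tilde Du|$ and $D_{jump}v\ll|D_{jump}u|$ hold.

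\emph{Jump part.} At $\mathcal{H}^{n-1}$-a.e.\ $x\in J_u$, the half-ball means of $u$ converge in the $L^1$ sense to $u_\pm$. Since $f$ is Lipschitz, the half-ball means of $f\circ u$ converge to $f(u_\pm)$. Hence, up to orientation, $x\in J_v$ with traces $f(u_\pm)$ whenever $f(u_+)\ne f(u_-)$, and $x$ is a continuity point of $v$ otherwise. Combining this with $J_v\subset S_u$ (up to a null set) and the structure formula $D_{jump}v=(v_+-v_-)\overrightarrow{n}_v\,\mathcal{H}^{n-1}\mres J_v$ gives
\[
D_{jump}v=\frac{f(u_+)-f(u_-)}{u_+-u_-}\,D_{jump}u .
\]

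\emph{Diffuse part.} For $f\in C^1$ with bounded derivative, I would pass to the limit in $\nabla(f\circ u_h)=f'(u_h)\nabla u_h$. The cleaner route is the coarea formula: $Du=\int_{\mathbb{R}}D\mathbf 1_{\{u>t\}}\,dt$, with $\tilde Du$ carried by the reduced boundaries at levels $t=\overline u(x)$. Writing $f(s)-f(0)=\int_0^s f'$ expresses $v$ as a superposition of level sets of $u$, and one obtains $\tilde Dv=f'(\overline u)\tilde Du$. For merely Lipschitz $f$, let $N$ be the Lebesgue-null set where $f'$ fails to exist. The coarea formula gives $|\tilde Du|(\overline u^{-1}(N))=0$, so $f'(\overline u)$ is defined $|\tilde Du|$-a.e. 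Approximating $f$ by $f_\varepsilon=f*\rho_\varepsilon$, we have $f_\varepsilon'\to f'$ a.e.\ on $\mathbb{R}$, hence $|\tilde Du|$-a.e.\ after composition with $\overline u$, again by the coarea formula. Dominated convergence, with bound $\mathrm{Lip}(f)$, then passes to the limit in the $C^1$ formula.

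The main obstacle is this last step. Showing that $\tilde Du$ does not charge preimages under $\overline u$ of Lebesgue-null sets requires the full strength of the coarea decomposition, including its handling of the Cantor part. I would isolate it as a separate lemma: for Borel $N\subset\mathbb{R}$ with $|N|=0$, $|\tilde Du|(\overline u^{-1}(N))=0$. I would prove it by writing $|Du|(B)=\int_{\mathbb{R}}\mathcal{H}^{n-1}(B\cap\partial^*\{u>t\})\,dt$ and observing that, off $S_u$, $x\in\partial^*\{u>t\}$ forces $t=\overline u(x)$.
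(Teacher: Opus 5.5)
The paper does not prove this statement; it quotes it from \cite{ambrosio}. Your outline is essentially the standard argument behind that citation, and it is correct: $|Dv|\le \mathrm{Lip}(f)\,|Du|$ by strict approximation, the jump part identified pointwise via $S_v\subset S_u$ and $\mathcal{H}^{n-1}(S_u\setminus J_u)=0$, and the diffuse part obtained from the $C^1$ case by mollifying $f$, resting on the lemma $|\tilde Du|(\overline u^{-1}(N))=0$ for Lebesgue-null $N$, which your coarea argument proves. Drop the suggestion of passing to the limit directly in $f'(u_h)\nabla u_h$, which weak-$*$ convergence of $\nabla u_h\,dx$ does not justify (across $J_u$ the limit produces difference quotients of $f$, not $f'$); note also that $Dv=\int_{\mathbb{R}} f'(t)\,D\mathbf{1}_{\{u>t\}}\,dt$ needs $f'$ only almost everywhere, so with your lemma it gives $\tilde Dv=f'(\overline u)\,\tilde Du$ for Lipschitz $f$ directly, making the mollification step optional.
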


Finally, we will have to disintegrate the partial derivative $\partial_v f$ of functions $f$ in $BV(\Omega_T)$. By using the construction and the remark in (\cite{ambrosio}, page 204), we get the following result.

\begin{theoreme}

Let $f\in BV(\Omega_T)$. Then $\partial_v f = dt \otimes (\partial_v f(t)) $ and $\partial_x f = dt \otimes (\partial_x f(t)) $, where $(\partial_v f(t)), (\partial_x f(t))$ are the partial derivative of the restriction 

$$
f(t):(x,v)\mapsto f(t,x,v)
$$

of $f$ to the hyperplane $\{t\}\times\mathcal{O}$. Their total variations are given by $|\partial_x f| = dt \otimes |(\partial_x f(t))| $ and $|\partial_v f| = dt \otimes |(\partial_v f(t))| $, which implies the following equality:

$$
|\partial_x f|(\Omega_T) +|\partial_v f|(\Omega_T)  = \int_0^T TV(f(t),\mathcal{O}) dt.
$$
\label{theoreme/desintegration}

\end{theoreme}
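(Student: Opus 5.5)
The plan is to derive the statement from the slicing (disintegration) theory for BV functions, specifically the construction in \cite{ambrosio} (Section 3.11, page 204). The key idea is to view $\Omega_T=(0,T)\times\mathcal{O}$ as a product and to slice $f$ along the time direction. For a.e.\ $t\in(0,T)$ the restriction $f(t)$ lies in $BV(\mathcal{O})$, and the partial derivatives of $f$ in the directions $e_x$ and $e_v$, which lie in the hyperplane $\{t\}\times\mathcal{O}$, disintegrate as
\[
\partial_x f = dt\otimes \partial_x f(t), \qquad \partial_v f = dt\otimes \partial_v f(t).
\]

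\textbf{Step 1: the disintegration identity.} Fix a test function $\varphi\in C_0^\infty(\Omega_T)$. By Fubini,
\[
\int_{\Omega_T} f\,\partial_v\varphi \,dt\,dx\,dv=\int_0^T\Big(\int_{\mathcal{O}} f(t)\,\partial_v\varphi(t)\,dx\,dv\Big)dt .
\]
The inner integral equals $-\int_{\mathcal{O}}\varphi(t)\,d(\partial_v f(t))$ whenever $f(t)\in BV(\mathcal{O})$. This membership is the content of the slicing theorem: for a.e.\ $t$ we have $f(t)\in BV(\mathcal{O})$ and $\int_0^T |D_{x,v}f(t)|(\mathcal{O})\,dt\le |Df|(\Omega_T)<\infty$. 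To see it, take a countable dense family of test fields $\varphi_k$ and apply Fubini to each; this shows that $t\mapsto \int f(t)\,\partial_v\varphi_k(t)$ is controlled in $L^1(dt)$, which gives finiteness of the slice variation for a.e.\ $t$. Once the slices are known to be BV, the identity $\partial_v f = dt\otimes\partial_v f(t)$ holds as measures by uniqueness of distributional derivatives. The argument for $\partial_x$ is identical.

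\textbf{Step 2: the total-variation identity.} I expect this step to be the main obstacle, because it needs equality $|\partial_v f| = dt\otimes|\partial_v f(t)|$ and not just an inequality.
\begin{itemize}
\item The inequality $\le$ is immediate from the disintegration in Step 1.
\item For the inequality $\ge$, I would use the density result for slices in \cite{ambrosio} (Theorem 3.107 and the remark on page 204). It rests on this fact: for a measure $\mu=dt\otimes\mu_t$, its total variation is $dt\otimes|\mu_t|$, provided $t\mapsto\mu_t$ is weakly measurable. One proves it by writing the polar decomposition $\mu=\sigma|\mu|$ and disintegrating $|\mu|$ with respect to the projection onto $t$. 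The disintegration theorem gives $|\mu|=\nu\otimes\lambda_t$ with $\nu\ll dt$, and comparing with $\mu=dt\otimes\mu_t$ forces $|\mu_t|=\lambda_t\,\tfrac{d\nu}{dt}$.
\end{itemize}

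\textbf{Step 3: the formula.} Evaluating $|\partial_x f|=dt\otimes|\partial_x f(t)|$ and $|\partial_v f|=dt\otimes|\partial_v f(t)|$ on $\Omega_T$ and summing gives
\[
|\partial_x f|(\Omega_T)+|\partial_v f|(\Omega_T)=\int_0^T \big(|\partial_x f(t)|(\mathcal{O})+|\partial_v f(t)|(\mathcal{O})\big)\,dt .
\]
It remains to identify the integrand with $TV(f(t),\mathcal{O})$. This requires the total variation on $\mathcal{O}$ to be measured anisotropically, as $|\partial_x f(t)|+|\partial_v f(t)|$ (the $\ell^1$ norm of the gradient), which is equivalent to the Euclidean $|Df(t)|$ up to a factor $\sqrt2$. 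I would state explicitly that this is the convention in force, since with the Euclidean convention the equality would only hold up to that constant.
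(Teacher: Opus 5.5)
Your proposal is correct and follows the same route as the paper, which simply invokes the slicing construction of \cite{ambrosio} (p.~204). Your polar-decomposition argument for $|dt\otimes\mu_t| = dt\otimes|\mu_t|$ correctly gives the total-variation identities. Your caveat that the final equality needs the anisotropic convention $TV(f(t),\mathcal{O}) = |\partial_x f(t)|(\mathcal{O}) + |\partial_v f(t)|(\mathcal{O})$ is also well taken: this is the convention implicit in the paper's discrete $TV = TV_x + TV_v$, and with the Euclidean variation the equality fails, e.g.\ for a jump across a line $x+v=c$.

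The only loose point is your heuristic in Step~1. Separate $L^1(dt)$ bounds for each $\varphi_k$ do not make $\sup_k$ finite a.e. What the slicing argument actually uses is a single majorant, obtained by testing with $\eta(t)\psi_k(x,v)$: namely $\bigl|\int_{\mathcal{O}} f(t)\,\partial_v\psi_k\,dx\,dv\bigr|\le g(t)$ for a.e.\ $t$ and all $k$ with $\|\psi_k\|_\infty\le 1$. Here $g$ is the density with respect to $dt$ of the push-forward of $|\partial_v f|$ under $(t,x,v)\mapsto t$.
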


We are now ready to state our main result which is the following.
\begin{theoreme}[Existence and uniqueness of weak solution for (VPA)] \label{MAIN_RESULT}   Assume, for an integer  $p \geq 4$, that 
\begin{itemize}
        \item $|v|^k\tilde{f}^\pm \in  BV(\mathcal{O}), k=0,...p$,
        \item $|v|^k g^\pm\in BV((0,T)\times\mathbb{R}^+), k=0,...p+1$,
        \item $|v|^k \tilde{f}^\pm\in L^\infty(\mathcal{O}),|v|^k g^\pm\in L^\infty((0,T)\times\mathbb{R}^{+\star})$, $k=0,...p$,
        \item $\underset{ t \in (0,T)}{\sup} \underset{ \Delta t > 0}  \sup \dfrac{1}{\Delta t}|\partial_v(|v|^kg^{\pm})|((t,t+\Delta t)\times\mathbb{R}^{+\star})< + \infty,$  $k=0,...,p$,
        \item $\underset{ t \in (0,T)}{\sup} \underset{ \Delta t > 0}  \sup \dfrac{1}{\Delta t}|\partial_t(|v|^kg^{\pm})|((t,t+\Delta t)\times\mathbb{R}^{+\star})< + \infty,$ $k=0,...,p$,
        \item $\underset{ t \in (0,T)} \sup  \int_{\mathbb{R}^{+}} |v|^k g^\pm(t,v)dv <\infty$, $k=0,...,p$.
\end{itemize}

Then, the Vlasov-Poisson-Ampère system  \eqref{m_vlasov+}-\eqref{m_initial_datum}  has a unique weak solution \\ $(f^{+},f^{-}, \phi) \in C^{0}\Big([0,T]; L^{1}(\mathcal{O}) \Big)^2 \times W^{2,\infty}(Q_{T})$ such that for all   $ k \in \Iintv {0,p-1}$, \\ $|v|^{k} f^{\pm} \in BV(\Omega_{T})$ and that for all $k \in \Iintv {0,p}$ and almost all $t$,  $|v|^{k} f^{\pm}(t) \in BV(\mathcal{O})$. 
\end{theoreme}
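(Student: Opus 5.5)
The proof follows the three-step strategy announced in the introduction.

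\emph{Step 1 (linear Vlasov with a given field).} Fix a field $E=-\partial_x\phi\in W^{1,\infty}(Q_T)$ and consider the linear Vlasov equation \eqref{m_vlasov+} with the boundary data \eqref{m_inject_bc}, \eqref{m_absorb_bc} and the initial datum \eqref{m_initial_datum}. I would discretize it with an upwind finite-volume scheme in the spirit of \cite{aguillon}, truncating velocities with the cutoff $\chi_R$. The CFL condition $\Delta t\,(R+1)/\Delta x+\Delta t\,\gamma_\pm\|E\|_\infty/\Delta v\le 1$ is chosen so that the scheme is monotone. For $|v|^k f_h$ I would derive uniform bounds in $L^\infty$ and $L^1$ and uniform discrete total variation bounds, for $k\le p$. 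The moment weight does not commute with the transport $v\partial_x$ only up to the force term: $\partial_v(|v|^k)E$ costs one power of $v$ times $\|E\|_\infty$, which is why the hypotheses require $k\le p$ (and $k\le p+1$ for the boundary term). Time increments of the boundary flux are controlled by the assumptions on $\partial_t(|v|^kg)$ and $\partial_v(|v|^kg)$ over small intervals. A discrete Gronwall then gives $TV(|v|^kf_h(t))\le C(T,\|E\|_{W^{1,\infty}})$. Theorem \ref{theoreme/compacite} gives a limit in $BV(\Omega_T)$, and Theorem \ref{theoreme/regularite_trace} lets us pass to the limit in the weak formulation including the traces. Uniqueness follows by an $L^1$ contraction argument: multiply the difference of two solutions by $\mathrm{sign}$, use Theorem \ref{theoreme/chainrule}, and note that the outgoing boundary terms have a good sign. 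The BV regularity in $(t,x,v)$ loses one moment, which accounts for $k\le p-1$ in the conclusion.

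\emph{Step 2 (Vlasov–Poisson with prescribed wall potential).} Given a Lipschitz function $t\mapsto\psi(t)$, impose $\phi(t,0)=0$ and $\phi(t,L)=\psi(t)$. The field is then $E=-\partial_x\phi$ with $\phi$ solving \eqref{m_poisson}; it is bounded by $C(\|\rho\|_{L^\infty_tL^1_x}+|\psi|)$ and its $x$-derivative is the charge density. Following \cite{Guo}, $\rho\in L^\infty$ follows from the $L^\infty$ bound on $|v|^2f$, and the moment estimates close with constants depending on $\sup|\psi|$. A Picard iteration $E\mapsto f\mapsto E$ is a contraction in $L^\infty_tL^1$ for short time, using the $L^1$ stability of Step 1 with the bound $|\partial_vf|$ in BV and the moment bounds. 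It is then iterated up to $T$, since all constants depend only on $\sup_{[0,T]}|\psi|$ and the data.

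\emph{Step 3 (floating potential).} Define the map $\Psi(\psi)(t)=\beta+\varepsilon_0^{-1}\int_0^t\!\int_0^L(q_+j^++q_-j^-)\,dx\,d\tau$, where $j^\pm$ is computed from the solution of Step 2. The currents $j^\pm$ are bounded using the moments, and $\Psi(\psi)$ is Lipschitz in $t$. The difference $\Psi(\psi_1)-\Psi(\psi_2)$ is $O(t)\sup|\psi_1-\psi_2|$ by the stability estimate of Step 2. Hence there is a unique fixed point on a short interval, whose length depends on a bound for $|\psi|$.

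\emph{Main obstacle: global extension.} To extend the solution to $[0,T]$ I need an a priori bound on $\phi(t,L)$, which I would get from an energy identity. Multiply Vlasov by $m_\pm|v|^2/2$ and integrate, using the traces via Theorem \ref{theoreme:trace}. The field contributes $\int E\,(q_+j^++q_-j^-)$. With $\partial_t\partial_x^2\phi$ given by charge conservation and the boundary value $\phi(t,L)$, this equals $-\frac{d}{dt}\frac{\varepsilon_0}{2}\int|\partial_x\phi|^2$ plus a boundary term $\phi(t,L)\times(\text{total current})$. The floating condition \eqref{m_potential_bc} is exactly what turns this boundary term into $\frac{d}{dt}\frac{\varepsilon_0}{2}\phi(t,L)^2$ (up to normalization), so it can be absorbed into the energy. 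The outgoing fluxes are nonnegative losses and the incoming flux is bounded by the hypotheses on $g$. Finally $|\phi(t,L)|\le\sqrt L\,\|\partial_x\phi(t)\|_{L^2}$ is controlled by the energy. The delicate point I expect is making this computation rigorous at BV regularity, namely handling the traces and the time derivative of the boundary potential. I would do it on the regularized or discrete solutions, or by using Theorem \ref{theoreme:trace} with $k=2$, which needs $p\ge4$.
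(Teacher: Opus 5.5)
\textbf{Verdict.} Your overall architecture is the paper's, and using Banach contractions instead of its Schauder argument on $B_{M,T}$ plus Gr\"onwall uniqueness is an acceptable variant. That architecture is: upwind-scheme BV bounds for the linear problem, a fixed point in the field for a prescribed wall value, a fixed point $\Psi$ for $\phi(\cdot,L)$, and global extension via the energy and $|\phi(t,L)|\le\sqrt{L}\,\|\partial_x\phi(t)\|_{L^2}$.

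\textbf{The energy identity is wrong as written.} This is exactly the step where the floating condition enters. Set $J=q_+j^++q_-j^-$ and $u(t)=\phi(t,L)$. For an arbitrary Dirichlet value, Lemma \ref{lemme/maxwellampere} gives $\epsilon_0\partial_t(-\partial_x\phi)=-J+\frac1L\int_0^LJ\,dx-\frac{\epsilon_0}{L}u'$. Since $\int_0^L\partial_x\phi\,dx=u$, this yields
\[
\int_0^L(-\partial_x\phi)\,J\,dx=-\frac{d}{dt}\frac{\epsilon_0}{2}\int_0^L|\partial_x\phi|^2dx-\frac{u(t)}{L}\Big(\int_0^LJ\,dx-\epsilon_0u'(t)\Big).
\]
So the boundary contribution is $-u(t)$ times the conduction-plus-displacement current $J-\epsilon_0\partial_t\partial_x\phi$, which is constant in $x$. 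It is not $u(t)$ times $\int_0^LJ\,dx$ alone: you dropped the term $\frac{\epsilon_0}{L}u\,u'$ coming from the time dependence of the Dirichlet value.

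Condition \eqref{m_potential_bc} says exactly that the bracket vanishes, i.e.\ the uncorrected Amp\`ere law $\epsilon_0\partial_t\partial_x\phi=J$ holds. Hence the boundary term is zero, and there is no $\frac{d}{dt}\phi(t,L)^2$ term to absorb. An energy augmented by a multiple of $\phi(t,L)^2$, as you propose, would not satisfy the flux identity: it differs from the true one by a multiple of $\frac{d}{dt}\phi(t,L)^2$, which has no sign. The correct computation gives the paper's identity
\[
\frac{d}{dt}\mathcal{E}=\sum_\pm\frac{m_\pm}{2}\Big(\int_{\mathbb{R}}v^3Tf^\pm(t,0,v)\,dv-\int_{\mathbb{R}}v^3Tf^\pm(t,L,v)\,dv\Big),
\]
with $\mathcal{E}$ containing only kinetic and field energy. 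From there your bound on $\phi(t,L)$ goes through.

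\textbf{Regularity needed for the energy step.} The flux terms involve $v^3Tf^\pm$, and the integration by parts pairs $v^3$ with $\partial_xf^\pm$. You therefore need $|v|^kf^\pm\in BV(\Omega_T)$ up to $k=3$, which is the hypothesis of Proposition \ref{proposition/energieestimation}. Since space-time BV holds only for $k\le p-1$, this is what forces $p\ge4$, not the case $k=2$.

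\textbf{Constants in the restart argument.} Your constants depend on $\|\psi\|_{W^{1,\infty}}$, not only on $\sup|\psi|$. This is because $\partial_tE$ contains $-\psi'/L$ and the linear theory needs $E\in W^{1,\infty}(Q_T)$. With a contraction, they also depend on the BV norms of $f^\pm(T_k)$, through the Lipschitz constant that must be made small. These quantities do stay bounded on $[0,T]$ once $\phi(\cdot,L)$ is controlled, but you have to say so. The paper's Schauder route needs smallness of $T_1$ only for the invariant set, whose constants involve just the mass, the first moment and $|\phi(T_k,L)|$, all controlled by mass and energy.
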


\section{ The  Vlasov equation with boundary conditions} \label{sec:linear-Vlasov}
\subsection{Weak solutions and main result of the section}

Let $T_r <T_{r+1}$ be two nonnegative numbers such that $[T_r,T_{r+1}]\subset[0,T]$. We set for ease in the reading $\Omega_r = (T_r,T_{r+1})\times \mathcal{O}$ and $Q_r= [T_r,T_{r+1}]\times [0,L]$. In this section, we study the linear Vlasov equation in bounded domain with a given electric field $E\in W^{1,\infty}\big( Q_{r} \big)$. Since the sign of the charge plays no role in the analysis, the problem amounts to study the regularity of the solution $f$ to the  Vlasov equation with Dirichlet  boundary conditions: 
\begin{equation} 
\left\lbrace
\begin{array}{ll}
\partial_t f + v \partial_x f +\gamma E \partial_v f = 0 &  \textnormal{ in } \Omega_{r}, \\ 
f(t,0,v) = g(t,v) &\text{ if } (t,v) \in (T_{r},T_{r+1}) \times \mathbb{R}^{+\star}, \\
f(t,L,v) = 0 &\text{ if } (t,v) \in (T_{r},T_{r+1}) \times \mathbb{R}^{-\star}, \\ 
f(T_r,x,v) = \tilde{f}(x,v) & \text{ if }  (x,v)\in \mathcal{O},  \\
\end{array}\right.
\label{systeme/vlasovlineairebord}
\end{equation}
Here,  $\gamma \neq 0$ is a given physical parameter, $g : (T_{r},T_{r}) \times \mathbb{R}^{+\star} \longrightarrow \mathbb{R}^{+}$ and $\tilde{f} : \mathcal{O} \longrightarrow \mathbb{R}^{+}$ are given non-negative functions. Due  to the boundary conditions, the solutions of the Vlasov problem \eqref{systeme/vlasovlineairebord} are rarely smooth \cite{guo_1994}. Without extra-compatibility conditions on the datum, it is known that some discontinuity at the boundary can occur and be propagated along a characteristic curve of the equation. 
We shall therefore consider weak solutions for the Vlasov equation.  The following theorem which is due to Boyer (\cite{boyertrace,aguillon}) provides existence and uniqueness of the weak solution when data are compactly supported.

\begin{theoreme}[Existence and uniqueness of weak solutions \cite{aguillon}]  \label{m_def_weaksol_vlasov}
Assume that $E\in W^{1,\infty}(Q_r)$,  $\tilde{f}\in L^\infty(\mathcal{O})$, $g\in L^\infty((T_r,T_{r+1})\times\mathbb{R}^{+\star})$, and that $\tilde{f},g$ are compactly supported. There exists a unique function $f\in L^\infty(\Omega_r)\cap C^0([T_r,T_{r+1}], L^1(\mathcal{O}))$, and a function $\overline{T}f\in L^\infty((T_r,T_{r+1})\times \partial \mathcal{O})$  which satisfies 

$$
f(T_r,.,.) = \tilde{f} \mbox{ , } \overline{T}f = g \mbox{ in } (T_r,T_{r+1})\times\mathbb{R}^{+\star} \mbox{ and } \overline{T}f = 0 \mbox{ in } (T_r,T_{r+1})\times\mathbb{R}^{-\star},
$$

\begin{multline} \label{m_weak_form}
\int_{\Omega_r} (\partial_t \varphi + v \partial_x \varphi + \gamma E \partial_v \varphi) f dtdxdv + \int_{T_r}^{T_{r+1}} \int_{\mathbb{R}} v \overline{T}f(t,v) \varphi(t,0,v) dv dt \\- \int_{T_r}^{T_{r+1}} \int_{\mathbb{R}} v \overline{T}f(t,v) \varphi(t,L,v) dv dt  + \int_{\mathcal{O}}\tilde{f}(x,v) \varphi(T_{r},x,v) dvdx \\-   \int_{\mathcal{O}}f(T_{r+1},x,v) \varphi(T_{r+1},x,v) dvdx  = 0,
\end{multline}

for all $\varphi \in W^{1,1}(\overline{\Omega_r})$ with bounded support.

\end{theoreme}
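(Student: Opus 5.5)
The natural route is the method of characteristics combined with a duality (adjoint) argument for uniqueness. Since $E\in W^{1,\infty}(Q_r)$, the field $(v,\gamma E(t,x))$ is Lipschitz in $(x,v)$ uniformly in $t$, and divergence free in $(x,v)$. We extend $E$ to a Lipschitz field on $[T_r,T_{r+1}]\times\mathbb{R}$ and consider the characteristic flow
\[
\dot X = V, \qquad \dot V = \gamma E(s,X).
\]
By Cauchy--Lipschitz this flow is globally defined, since $|\dot V|\le |\gamma|\,\|E\|_\infty$. It is measure preserving in $(x,v)$ by Liouville's theorem.

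\textbf{Existence.} For each $(t,x,v)\in\Omega_r$ we trace the backward characteristic until it first leaves $\overline{\mathcal O}$ or reaches $s=T_r$. This defines an exit time $s^-(t,x,v)$.
\begin{itemize}
\item If the curve reaches $T_r$ inside $\mathcal O$, we set $f=\tilde f(X,V)$.
\item If it exits through $x=0$, necessarily with $V>0$, we set $f=g(s^-,V)$.
\item If it exits through $x=L$, necessarily with $V<0$, we set $f=0$.
\end{itemize}
Grazing points, where $V=0$ on the boundary, form a null set by the measure preservation and the change of variables $dv\,dt$ on the boundary, so they can be ignored. The function $\overline Tf$ is defined the same way on the outgoing part of the boundary. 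The bound $f\in L^\infty$ is immediate. Compact support of the data together with the bounded acceleration implies that $f$ vanishes for $|v|$ large on $[T_r,T_{r+1}]$. The weak formulation \eqref{m_weak_form} then follows by the change of variables along characteristics. We split $\Omega_r$ into the three regions above and use the Jacobian identities $dx\,dv = |v|\,ds\,dv$ at the boundary to produce the boundary terms. For time continuity in $L^1$, we approximate $\tilde f$ and $g$ by continuous compactly supported functions, for which $f$ is continuous except near grazing sets of small measure, and use the $L^1$ contraction of the transport.

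\textbf{Uniqueness.} This is the main difficulty, because of the irregularity of traces at the boundary. By linearity, take $\tilde f=0$ and $g=0$, so that $\overline Tf$ vanishes on incoming boundaries. The goal is to show $f=0$. I would test \eqref{m_weak_form}, written on $(T_r,t)$, with solutions $\varphi$ of the backward adjoint problem
\[
\partial_t\varphi+v\partial_x\varphi+\gamma E\partial_v\varphi=0,
\]
with final data $\psi\in C_c^\infty(\mathcal O)$ at time $t$ and zero data on the outgoing boundary. This $\varphi$ is constructed by characteristics as above. It is bounded and compactly supported, but only BV, not $W^{1,1}$. Following Boyer, I would regularize:
\begin{itemize}
\item mollify $\psi$ and cut $\varphi$ off near the grazing set with a smooth cutoff so that it becomes Lipschitz;
\item the error terms are integrals of $f$ against $(\partial_t+v\partial_x+\gamma E\partial_v)$ of the cutoff, and these vanish in the limit because $f$ is bounded and the grazing set has measure zero.
\end{itemize}
The boundary terms vanish since $\overline Tf=0$ where $v$ is incoming and $\varphi=0$ on the outgoing part. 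This yields $\int f(t)\psi=0$ for all such $\psi$, hence $f(t)=0$. Uniqueness of $\overline Tf$ then follows by testing with $\varphi$ localized near the boundary.

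The delicate step is controlling the cutoff near grazing characteristics uniformly. I would handle it by using the divergence-free structure, so that cutoff errors reduce to measures of thin tubes around the singular set.
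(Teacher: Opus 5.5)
The paper does not prove this statement. It imports it from Boyer's theory: traces of $L^\infty$ weak solutions, renormalization for uniqueness, and existence via the convergent upwind scheme. That theory covers much rougher fields than yours. Your characteristics-plus-duality route is a legitimate and more elementary alternative when $E$ is Lipschitz. It would also justify the explicit representation of $f$ along characteristics, which the paper later uses without proof. The existence part is fine in outline. The uniqueness part, however, has a gap exactly at the step you flag as delicate.

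\textbf{The gap.} You claim the cutoff errors $\int f\varphi\,(\partial_t+v\partial_x+\gamma E\partial_v)\theta_\epsilon$ vanish ``because $f$ is bounded and the grazing set has measure zero''. That reasoning does not work for a generic cutoff.
\begin{itemize}
\item The discontinuity set $S$ of the adjoint solution is not the grazing set on $\partial\mathcal O$. It is the union of the characteristics through the grazing lines $\{x\in\{0,L\},\ v=0\}$, a two-dimensional surface in $\Omega_r$.
\item A cutoff vanishing on an $\epsilon$-neighbourhood of $S$ has gradient of size $\epsilon^{-1}$ on a tube of volume of order $\epsilon$. The naive estimate of the error is therefore only $O(1)$.
\item The divergence-free structure of $(v,\gamma E)$ does not help here.
\end{itemize}

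\textbf{The missing idea.} $S$ is invariant under the flow, and you can exploit this in either of two equivalent ways.
\begin{itemize}
\item Take the cutoff constant along characteristics: $\theta_\epsilon=\eta_\epsilon\circ\Phi_{s\to t}$, where $\Phi_{s\to t}$ is the extended flow from time $s$ to time $t$ and $\eta_\epsilon$ vanishes near the time-$t$ image $\Gamma_t$ of $S$. Its transport derivative is then identically zero.
\item Or take final data $\psi\in C_c^\infty(\mathcal O\setminus\Gamma_t)$ directly. Off $S$, the indicator that the forward characteristic stays in $\overline{\mathcal O}$ up to time $t$ is locally constant in $\Omega_r$. Hence the adjoint solution is already Lipschitz and is an admissible $W^{1,1}$ test function. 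Since $\Gamma_t$ is a union of two Lipschitz curves, it is Lebesgue-null, and you get $f(t)=0$ a.e.
\end{itemize}
A distance cutoff $\eta(d_S/\epsilon)$ also works, but only because invariance gives $|(\partial_t+v\partial_x+\gamma E\partial_v)d_S|\le C\,d_S$. This makes the transport derivative $O(1)$ on a set of measure $O(\epsilon)$.

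\textbf{A smaller point.} The weak formulation is posed on the whole interval $(T_r,T_{r+1})$. Restricting it to $(T_r,t)$ requires a time cutoff together with the continuity $f\in C^0([T_r,T_{r+1}];L^1(\mathcal O))$.
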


The goal of this section is to prove BV regularity for the weak solution. This will be done using a converging upwind finite volume scheme. First, BV estimates will be established directly on the scheme for compactly supported data. As the scheme converges, we will obtain the estimates on the weak solution. Then, we will extend the result for data with unbounded supports. This is not trivial since the ratio $\frac{h}{\Delta t }$ depends linearly on the size of the supports of the data and thus needs a  suitable control. We also impose higher regularity on the data to control moments in velocity.  With the notations introduced above, we shall establish the following.

\begin{proposition}[Regularity for the  Vlasov equation] Assume, for an integer $p\geq 1$, that

    \begin{itemize}
        \item $|v|^{k} \tilde{f}\in BV(\mathcal{O}) \cap L^\infty(\mathcal{O})$ for $k=0,...p$,
        \item $|v|^{k} g\in BV((0,T)\times\mathbb{R}^{+\star}) \cap L^\infty((0,T)\times\mathbb{R}^{+\star})$ for $k=0,...p$, 
        \item $|v|^{p+1} g\in BV((0,T)\times\mathbb{R}^{+\star})$,
        \item $\underset{ t \in (0,T)} \sup \underset{ \Delta t > 0} \sup \dfrac{1}{\Delta t}|\partial_v(|v|^kg)|((t,t+\Delta t)\times\mathbb{R}^{+\star})<\infty$ for $k=0,...,p$,
        \item $\underset{ t \in (0,T)} \sup \underset{ \Delta t > 0} \sup \dfrac{1}{\Delta t}|\partial_t(|v|^kg)|((t,t+\Delta t)\times\mathbb{R}^{+\star})<\infty$ for $k=0,...,p$,
        \item $\underset{ t \in (0,T)} \sup  \int_{\mathbb{R}^{+}} |v|^k g^\pm(t,v)dv <\infty$ for  $k=0,...,p$.
    \end{itemize}
    
    Then the  Vlasov boundary value problem \eqref{systeme/vlasovlineairebord} admits a unique weak solution $f \in C^0\Big([0,T]; L^{1}(\mathcal{O}) \Big)$ such that $|v|^k f\in BV(\Omega_T)$ for $k \in \Iintv{0,p-1}$, and that for almost all $t$ and all $k \in \Iintv{0,p}$,    $|v|^k f(t)\in BV(\mathcal{O})$. \label{proposition:regularite_vlasov}
\end{proposition}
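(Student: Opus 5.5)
The plan follows the strategy announced in the introduction and adapts the upwind finite-volume approach of \cite{aguillon}. First, assume that $\tilde f$ and $g$ are compactly supported, say in $\{|v|\le R\}$. Since $E$ is bounded by $\|E\|_{L^\infty(Q_r)}$, the characteristics remain in $\{|v|\le R+|\gamma|\|E\|_\infty (T_{r+1}-T_r)\}=:\{|v|\le V_R\}$, so the weak solution of Theorem \ref{m_def_weaksol_vlasov} is supported there. On a Cartesian mesh of $(T_r,T_{r+1})\times(0,L)\times(-V_R,V_R)$ with steps $\Delta t,\Delta x,\Delta v$, I discretize the equation by an explicit upwind scheme: the fluxes in $x$ are upwinded according to the sign of $v_j$ and the fluxes in $v$ according to the sign of $\gamma E$. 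The boundary conditions are imposed through ghost cells, with $g$ averaged on the incoming part at $x=0$ and $0$ on the incoming part at $x=L$. Under the CFL condition $\Delta t\,(V_R/\Delta x+|\gamma|\|E\|_\infty/\Delta v)\le 1$ the scheme is monotone, preserves positivity and is $L^\infty$-stable.

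The core of the argument is a set of discrete BV estimates, uniform in the mesh. I would estimate separately the discrete variations in $x$, in $v$ and in $t$. Differencing the scheme in $v$ produces a commutator term $v$-difference $\times\ \partial_x$, and differencing in $x$ produces the term $\partial_x E\,\partial_v f$, which is controlled by $\|E\|_{W^{1,\infty}}$. This yields a Gronwall inequality for $TV_x+TV_v$ at each time level. The boundary contributes terms of the form $\Delta t\sum|v_j|\,|g^n_j-f^n_{1,j}|$ as well as the variation of $g$ in $t$ and in $v$. These are bounded using the hypotheses $\sup_t\frac1{\Delta t}|\partial_v g|((t,t+\Delta t)\times\mathbb{R}^{+\star})<\infty$ and the analogous hypothesis for $\partial_t g$, together with $|v|g\in BV$. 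The time variation then follows from the scheme itself, as $\sum|f^{n+1}-f^n|\lesssim\Delta t(\ldots)$. I repeat the whole computation with $f$ replaced by $|v|^kf$, $k\le p$. Multiplying by $|v|^k$ creates a source $k\gamma E|v|^{k-1}\mathrm{sgn}(v) f$, which is handled by induction on $k$, and the boundary term at step $k$ requires $|v|^{k+1}g$. This is why the estimate only reaches $k\le p-1$ in $BV(\Omega_r)$ but reaches $k\le p$ at fixed time: the time derivative costs one extra power of $v$ through $v\partial_x f$.

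For passage to the limit, the uniform $L^1$ and BV bounds together with Theorem \ref{theoreme/compacite} give convergence of the piecewise-constant reconstructions in $L^1_{loc}$. Consistency of the scheme shows that the limit satisfies \eqref{m_weak_form}, so by uniqueness it is the weak solution. Lower semicontinuity of the total variation then transfers the bounds to $|v|^kf$. Fixed-time bounds hold for a.e.\ $t$ by Theorem \ref{theoreme/desintegration} and by lower semicontinuity applied on time slices, and the $C^0([T_r,T_{r+1}];L^1)$ regularity comes from Theorem \ref{m_def_weaksol_vlasov}.

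The main obstacle is removing the compact-support assumption. Here the CFL ratio $\Delta t/\Delta x\sim 1/V_R$ degenerates as $R\to\infty$, so every constant must be independent of $R$. I would therefore make sure that the Gronwall constants depend only on $\|E\|_{W^{1,\infty}}$, $T$ and weighted norms of the data, never on $V_R$. The natural way to arrange this is to always measure the transport term through moments $|v|^{k+1}$ of the data rather than through $V_R$. I would then truncate, setting $\tilde f_R=\chi_R(v)\tilde f$ and $g_R=\chi_R(v)g$, whose weighted BV norms are bounded uniformly in $R$ since $\chi_R'$ is bounded. The solutions $f_R$ are monotone in $R$ and, by linearity and $L^1$-contraction, converge in $C^0(L^1)$. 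Lower semicontinuity then gives the BV bounds for the limit, and uniqueness is obtained by the same $L^1$-contraction argument applied to the difference of two solutions.
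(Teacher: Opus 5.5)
Your proposal is correct. For compactly supported data it follows the paper's argument: an upwind scheme, weighted discrete $BV$ bounds in $x$ and $v$ closed by Gr\"onwall and an induction on the moment order, $|v|^{p+1}g$ entering through the inflow boundary, and one moment lost in $BV(\Omega_T)$ because of $v\partial_x f$. The only differences there are minor. The paper keeps the full mesh $v_j=jh$ and absorbs the numerical spreading of the support into the constant $\mu_0$ of \eqref{equation/CFL}, which is why it works on sub-intervals with $T_{r+1}-T_r<1$. It also quotes Boyer's convergence result instead of arguing by compactness plus consistency.

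The genuinely different step is the removal of the compact support. The paper uses only the $R$-uniform $BV$ bounds. For each fixed $t$ it extracts an $L^1_{loc}$-convergent subsequence of $f_R(t)$ (Theorem \ref{theoreme/compacite}) and transfers the bounds by lower semicontinuity. It then recovers the weak formulation by localizing traces, and proves time continuity separately via $\|f(t_2)-f(t_1)\|_{L^1(\mathcal{O})}\le|\partial_t f|((t_1,t_2)\times\mathcal{O})$.

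You instead use that $\chi_R$ is pointwise nondecreasing in $R$. So for $R'\ge R$ the function $f_{R'}-f_R\ge 0$ solves the problem with data $(\chi_{R'}-\chi_R)\tilde f$, $(\chi_{R'}-\chi_R)g$, and the balance of Proposition \ref{proposition/streamtube} gives
\[
\sup_{t}\|f_{R'}(t)-f_R(t)\|_{L^1(\mathcal{O})}\le\|(\chi_{R'}-\chi_R)\tilde f\|_{L^1(\mathcal{O})}+\int_0^T\!\!\int_{\mathbb{R}^+}v\,(\chi_{R'}-\chi_R)\,g\,dv\,dt\longrightarrow 0 .
\]
This buys several things:
\begin{itemize}
\item convergence of the whole family in $C^0([0,T];L^1(\mathcal{O}))$, with no $t$-dependent extraction;
\item time continuity and the passage to the limit in the weak formulation directly, since the outgoing fluxes are controlled by the same balance identity;
\item uniqueness for data with unbounded support, by applying the same argument to the difference of two solutions; the paper's second step does not address this.
\end{itemize}
With your route the $BV$ bounds are needed only for lower semicontinuity. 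The paper's route needs nothing beyond those uniform bounds.

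One bookkeeping point to get right when you write the estimates concerns your boundary term $\Delta t\sum_j|v_j||g^n_j-f^n_{1,j}|$. This is what appears if the jump at the inflow interface is left out of $TV_x$. With the proper normalisation it reads $\frac{\Delta t}{\Delta x}\sum_j\Delta v\,|v_j||g^n_j-f^n_{1,j}|$, which is of order $\Delta t/\Delta x$ per step and does not sum to a bounded quantity. That jump must be counted inside $TV_x$, as the paper does through the extension $f^n_{-1,j}=f^n_{0,j}$ and the corrective terms $e^n_{0,j}$. Its evolution then involves only $d_tg$ and $d_vg$, which is exactly where the hypotheses on $\partial_t(|v|^kg)$ and $\partial_v(|v|^kg)$ are used.
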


\subsection{Properties of weak solutions with BV regularity}

The next proposition shows that the continuity equation is true under mild regularity hypothesis. It also establishes the  regularity of $\rho$ and $j$. These results will be helpful when we address the Vlasov-Poisson system as we need estimates on the electric field.

\begin{proposition}[Continuity equation and of regularity of the two first moments]
Let $f$ be a weak solution in the sense of definition \ref{m_def_weaksol_vlasov}.  
    \begin{enumerate}
        \item If $f, |v|f \in L^{1}(\Omega_r)$, then $\rho$ and $j$ satisfy the continuity equation in the space of distribution $\mathcal{D}'(Q_{r})$:
        \begin{align} \label{m_continuity_equation}
          \partial_{t}\rho = - \partial_{x}j . 
        \end{align}        
        \item If moreover $f\in BV(\Omega_r)$, then $\rho\in  BV(Q_{r})$, and the continuity equation \eqref{m_continuity_equation} is true in $\mathcal{M}(\Omega_r)$.
        \item If $ |v|f \in BV(\Omega_r)$, then $j\in  BV(Q_{r})$.
    \end{enumerate}

\label{proposition/regulariterhoj}
\end{proposition}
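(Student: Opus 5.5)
Item (1) comes from testing the weak formulation \eqref{m_weak_form} with functions that do not depend on $v$, after a cutoff. Fix $\psi\in C_c^\infty((T_r,T_{r+1})\times(0,L))$ and take $\varphi_R(t,x,v)=\psi(t,x)\chi_R(v)$. This test function lies in $W^{1,1}$ and has bounded support. The boundary terms at $x=0,L$ vanish because $\psi$ vanishes there, and the terms at $t=T_r,T_{r+1}$ vanish because $\psi$ is compactly supported in time. What remains is
\[
\int_{\Omega_r}\big(\partial_t\psi\,\chi_R+v\,\partial_x\psi\,\chi_R+\gamma E\,\psi\,\chi_R'(v)\big)f\,dtdxdv=0 .
\]
I then let $R\to\infty$. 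The first two terms converge to $\int_{Q_r}(\rho\,\partial_t\psi+j\,\partial_x\psi)\,dtdx$ by dominated convergence, using $f,|v|f\in L^1(\Omega_r)$ and $|\chi_R|\le 1$. For the third term, $\chi_R'$ is bounded by $\|\alpha'\|_\infty$ and supported in $R\le |v|\le R+1$. Since $E\in L^\infty$, this term is bounded by $C\|\psi\|_\infty\int_{R\le|v|\le R+1}|f|$, which tends to $0$ because $f\in L^1$. This gives $\partial_t\rho+\partial_x j=0$ in $\mathcal D'(Q_r)$. Fubini ensures $\rho,j\in L^1(Q_r)$.

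For items (2) and (3), I prove the BV regularity of the velocity moments by duality with the total variation. Take $\Phi=(\Phi_1,\Phi_2)\in C_c^\infty(Q_r^\circ)^2$ with $\|\Phi\|_\infty\le1$. Then
\[
\int_{Q_r}\rho\,\Div_{t,x}\Phi=\lim_{R\to\infty}\int_{\Omega_r}f\,\Div_{t,x,v}(\Phi_1\chi_R,\Phi_2\chi_R,0)\,dtdxdv ,
\]
where the limit is justified by dominated convergence since $f\in L^1$. The vector field $(\Phi_1\chi_R,\Phi_2\chi_R,0)$ is smooth, compactly supported in $\Omega_r$, and has sup norm at most $1$. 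By the definition of $TV$, the right-hand integral is therefore bounded by $|Df|(\Omega_r)$. Hence $TV(\rho,Q_r)\le |Df|(\Omega_r)<\infty$, so $\rho\in BV(Q_r)$. The same computation with $vf$ in place of $f$ gives $TV(j,Q_r)\le|D(vf)|(\Omega_r)$. Here $vf\in BV$ follows from $|v|f\in BV$: write $vf=\mathrm{sgn}(v)|v|f$, which is a product with a function depending only on $v$. The product rule is harmless because $|v|f$ vanishes on $\{v=0\}$ in the trace sense, or alternatively I use $\Phi\,\mathrm{sgn}(v)\chi_R$ regularized near $v=0$. The only point needing care is the approximation near $v=0$. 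Since $|v|f$ is small there, the error is $\int_{|v|<\epsilon}|v||f|\,|\nabla\Phi|$, which goes to $0$. So $j\in BV(Q_r)$ with the hypothesis of item (3).

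Finally, in item (2) the identity $\partial_t\rho=-\partial_x j$ holds as an equality of measures. Here $\partial_t\rho$ is a finite Radon measure because $\rho\in BV$. The distributional identity from item (1) then shows that $\partial_x j$ is the same finite measure. Two finite Radon measures that agree as distributions on $C_c^\infty$ coincide. The main point to watch is in item (1): the cutoff term $\gamma E\psi\chi_R' f$ must vanish. That needs only $f\in L^1$ and $E$ bounded, which is exactly where the integrability assumptions enter.
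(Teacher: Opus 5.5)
Your proof is correct and follows the paper's route. Item (1) uses the cutoff test function $\chi_R(v)\varphi(t,x)$ with dominated convergence, and items (2)--(3) test $f$ and the velocity-weighted function against fields $(\phi_1,\phi_2,0)$ with no $v$-component, which is what the paper's appeal to Theorem \ref{theoreme/regularite_trace} with compactly supported $(\phi_1,\phi_2)$ amounts to; your regularization of $\mathrm{sgn}(v)$ near $v=0$ (giving $TV(j,Q_r)\le |D(|v|f)|(\Omega_r)$ directly) and your justification of the measure identity in (2) only fill in details the paper leaves implicit.
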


\begin{proof}

Let $\varphi \in C_{0}^{\infty}(Q_{r})$. We take the test function $\psi(t,x,v) = \chi_{R}(v)\varphi(t,x)$ in the weak formulation \eqref{m_weak_form} to get

$$
\int_{T_r}^{T_{r+1}}\int_{0}^{L}\int_{\mathbb{R}}f (\chi_{R}\partial_t \varphi + v\chi_{R} \partial_x \varphi + \gamma E \varphi  \partial_v \chi_{R}) dvdxdt = 0.
$$

By the dominated convergence theorem, as $\partial_v \chi_{R}\to 0$ when $R\to \infty$, and that $\chi_R f \to f$ point-wise almost everywhere,  we obtain that  

$$
\int_{T_r}^{T_{r+1}}\int_{0}^{L}\int_{\mathbb{R}}f (\partial_t \varphi + v \partial_x \varphi )dvdxdt = \int_{T_r}^{T_{r+1}}\int_{0}^{L}\rho \partial_t \varphi + j \partial_x \varphi dxdt =  0.
$$

This shows the continuity equation. We then establish the regularity of $\rho$ and $j$ by using Theorem \ref{theoreme/regularite_trace} with $ (\phi_1,\phi_2) \in C_{0}^{\infty}(Q_{r})^2$.

\end{proof}

In the sequel, we will have to estimate the $L^1$ norm of non-negative functions in $BV$ knowing that they are transported by the flow of the vector field $(s,x,v) \longmapsto (1,v,E(s,x))$. To do so,  we apply the Green theorem inside the domain.

\begin{proposition}[$L^1$ estimate]
    Fix $t>0$ and  $E\in W^{1,\infty}(Q_t)$. Let $p$ be a nonnegative function such that  $p, |v|p\in BV(\Omega_t)$, and $Tp=0$ on $(0,t)\times\{L\}\times\mathbb{R^-}$. Then,  we have the following equality:
    
    \begin{equation}
        \|p(t)\|_{L^1(\mathcal{O})} \leq  \|p(0)\|_{L^1(\mathcal{O})}  + \int_{0}^t\int_{\mathbb{R}^+} v Tp(s,x,v)dsdv +  \int_{\Omega_t} \begin{pmatrix}
        1\\
        v\\
         E
    \end{pmatrix}\cdot d(Dp).
    \end{equation}

    \label{proposition/streamtube}
\end{proposition}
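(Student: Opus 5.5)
Apply the Green formula of Theorem \ref{theoreme/regularite_trace} on $\Omega_t$ to the field $(1,v,E)$, tested against the cutoff $\chi_R(v)$. Let $R\to\infty$ and identify the boundary contributions.

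\textbf{Setup.} Since $p\ge 0$ and $p,|v|p\in BV(\Omega_t)$, Theorem \ref{theoreme/regularite_trace} holds with $p$ in place of $f$ and $k\in\{0,1\}$. The field $(1,v,E)$ is not of the form $(\phi_1,\phi_2,0)$, so I insert the cutoff and work with $\chi_R(v)(1,v,E(s,x))$. This field is compactly supported in $v$ and Lipschitz, so the extension of Theorem \ref{divergence_formula} to the region $(0,t)\times(0,L)\times(-R-1,R+1)$ applies directly. One could instead regularize $E$ by mollification and pass to the limit using $|Dp|(\Omega_t)<\infty$.

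\textbf{Green formula.} With $\overrightarrow{n}$ the outward normal, the formula reads
\[
\int_{\Omega_t} p\,\Div\big(\chi_R(1,v,E)\big) = \int_{\partial\Omega_t} Tp\,\chi_R\,(1,v,E)\cdot\overrightarrow{n}\,d\mathcal{H}^2 - \int_{\Omega_t}\chi_R(1,v,E)\cdot d(Dp).
\]
The divergence is $\Div(\chi_R(1,v,E)) = E\,\chi_R'(v)$, because $\partial_x v=0$ and $E$ does not depend on $v$. The boundary of $\Omega_t$ has four pieces.
\begin{itemize}
\item On $s=t$ the normal is $(1,0,0)$, giving $\int_{\mathcal O}\chi_R\,Tp(t)$.
\item On $s=0$ the normal is $(-1,0,0)$, giving $-\int_{\mathcal O}\chi_R\,Tp(0)$.
\item On $x=0$ the normal is $(0,-1,0)$, giving $-\int_0^t\int_{\mathbb R} v\,\chi_R\,Tp(s,0,v)$.
\item On $x=L$ the normal is $(0,1,0)$, giving $\int_0^t\int_{\mathbb R} v\,\chi_R\,Tp(s,L,v)$.
\end{itemize}
I identify $Tp$ on $\{s=t\}$ with $p(t)$, and on $\{s=0\}$ with $p(0)$, which is the meaning of the norms in the statement.

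\textbf{Rearrangement and signs.} Solving for the $s=t$ term gives
\[
\int\chi_R\,p(t) = \int\chi_R\,p(0) + \int_0^t\!\int_{\mathbb R} v\chi_R\,Tp(s,0,v) - \int_0^t\!\int_{\mathbb R} v\chi_R\,Tp(s,L,v) + \int_{\Omega_t} p\,E\chi_R' + \int_{\Omega_t}\chi_R(1,v,E)\cdot d(Dp).
\]
I then drop the terms with a favourable sign.
\begin{itemize}
\item At $x=L$ the hypothesis gives $Tp=0$ for $v<0$, so the $x=L$ term is $-\int v\chi_R\,Tp$ over $v>0$, which is $\le 0$ since $Tp\ge0$.
\item At $x=0$ the part with $v<0$ is $\le 0$, leaving $\int_0^t\int_{\mathbb R^+} v\,Tp(s,0,v)$. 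This is the flux term in the statement, read at $x=0$.
\end{itemize}

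\textbf{Limit $R\to\infty$.} The term $\int p\,E\chi_R'$ is bounded by $\|E\|_\infty\|p\|_{L^1(\Omega_t\cap\{|v|>R\})}$, which tends to $0$. The measure term converges by dominated convergence, since $|(1,v,E)|\,d|Dp|$ is finite: the part $v\,\partial_x p$ is controlled by $|D(|v|p)|$ plus $p$ in $L^1$. The boundary terms converge by monotone convergence, because $|v|\,Tp\in L^1$ by Theorem \ref{theoreme/regularite_trace}. Finally $\int\chi_R\,p(t)\uparrow\|p(t)\|_{L^1}$.

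\textbf{Main obstacle.} The delicate step is justifying that the time-slice traces equal $p(t)$ and $p(0)$ for the given $t$, not merely almost every $t$. I would first establish the inequality for a.e. $t$. For the remaining $t$ I would use the time continuity in $L^1$ that is available in the applications, or simply define $p(t)$ as the trace. The other point needing care is that $v\,\partial_x p$ has finite total variation, and I would control it through $|v|p\in BV$.
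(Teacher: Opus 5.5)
Your proposal is correct and follows essentially the same route as the paper: Green's formula on $\Omega_t$ for the cut-off field $\chi_R(v)(1,v,E)$ (whose divergence is $E\chi_R'$), the four face contributions, the sign argument using $Tp\ge 0$ and $Tp=0$ on the incoming part of $\{x=L\}$, and the limit $R\to\infty$. The paper regularizes $E$ by $C^1$ fields converging uniformly, which is your mollification alternative.

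The differences are minor. You drop the nonpositive boundary terms before letting $R\to\infty$, so monotone convergence suffices for the remaining ones. You also make explicit two points the paper leaves implicit: the finiteness of $|v|\,|\partial_x p|$, which follows from $|v|p\in BV$, and the identification of the time-face traces with $p(t)$ and $p(0)$.
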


\begin{proof}
    Let $R>0$. Since $E \in \mathscr{C}^{0}\Big( \overline{Q_{t}}\Big)$ we may find a sequence $(E_n)_{n \in \mathbb{N}} \subset \mathscr{C}^{1}(Q_{t})$ which converges toward $E$ in $L^\infty(Q_t)$.  Applying the divergence formula \ref{divergence_formula} to the field \\ $\varphi : (s,x,v) \mapsto \chi_R(v)(1,v, E_n(s,x))$ and the scalar function $p$, we see that

\begin{equation*} 
    \int_{\Omega_t} p E_n(s,x)\chi_R'(v) dsdxdv = \int_{\partial\Omega_t} T(p) \chi_R(v)\begin{pmatrix}
        1\\
        v\\
         E_n(s,x)
    \end{pmatrix}\cdot\overrightarrow{n}d\mathcal{H}^2
    - \int_{\Omega_t} \chi_R(v)\begin{pmatrix}
        1\\
        v\\
         E_n(s,x)
    \end{pmatrix}\cdot d(Dp).
\end{equation*}

Using dominated convergence when $n\to\infty$, we get that 

\begin{eqnarray} \label{equation/3335}
    \int_{\Omega_t} p E(s,x)\chi_R'(v) dsdxdv &=& \int_{\partial\Omega_t} T(p) \chi_R(v)\begin{pmatrix}
        1\\
        v\\
         E(s,x)
    \end{pmatrix}\cdot\overrightarrow{n}d\sigma \nonumber
    \\ &-& \int_{\Omega_t} \chi_R(v)\begin{pmatrix}
        1\\
        v\\
         E(s,x)
    \end{pmatrix}\cdot d(Dp).
\end{eqnarray}

Then, we readily use the dominated convergence  as $R\to\infty$. Indeed, we know that $|v|T(p)\in L^1(\partial\Omega_t)$ (see Theorem \ref{theoreme/regularite_trace}). We thus obtain that

\begin{equation} \label{equation/3336}
   0 = \int_{\partial\Omega_t} T(p) \begin{pmatrix}
        1\\
        v\\
         E
    \end{pmatrix}\cdot\overrightarrow{n}d\sigma 
     - \int_{\Omega_t} \begin{pmatrix}
        1\\
        v\\
         E
    \end{pmatrix}\cdot d(Dp).
\end{equation}

Finally,  the boundary term is the sum of the boundary terms on each face of $\Omega_t$. Rearranging the terms and considering that $p,Tp\geq 0$ leads to the conclusion.

\end{proof}

\subsection{The upwind scheme}
Using the upwind scheme, we shall build a sequence of approximate solutions $(f_k)_{k\in\mathbb{N}}$ which converges toward the weak solution \\ $f \in C^{0}\Big([T_{r},T_{r+1}];L^{1}(\mathcal{O})\Big)$. Establishing BV estimates on this sequence and using the lower semi-continuity of the total variation in the $L^1_{loc}$ topology, we will deduce estimates of the weak solution in $BV(\mathcal{O})$. Let us introduce the scheme. We fix two integers $I,N\in\mathbb{N}^\star$ and set  the space step $h:=\dfrac{L}{I+1}$ and the time step $\Delta t: =\dfrac{T_{r+1}-T_r}{N}$. We define  the  following space points, velocity points and the times:
 
\begin{align}
    &x_i = \frac{h}{2}+ih, \ \  x_{i+1/2} = (i+1)h, \ \  x_{i-1/2}=ih, \  i=0,...,I\\
    &v_j = jh, \ \  v_{j+1/2} = jh+\frac{h}{2}, \ \  v_{j-1/2}=jh-\frac{h}{2}, \   j\in\mathbb{Z},\\
    &t_{n}= T_r +n\Delta t, \ n=0,...,N.
\end{align}
These points define the control volumes
\begin{equation}
     {K}_{n,i,j}= [t_n, t_{n+1})\times (x_{i-1/2},x_{i+1/2}) \times (v_{j-1/2},v_{j+1/2}), \ 0\leq n \leq N-1, \  0\leq i\leq I, \ \ j\in\mathbb{Z}.
\end{equation}
Their different faces (or cells) are denoted 
 \begin{align}
      \leftindex^n  {K}^j = [t_n, t_{n+1})\times (v_{j-1/2},v_{j+1/2}), \\
       K_i^j =  (x_{i-1/2},x_{i+1/2}) \times (v_{j-1/2},v_{j+1/2}),\\
      \leftindex^n  {K}_i = [t_n, t_{n+1})\times (x_{i-1/2},x_{i+1/2}).
 \end{align}
The data and the electric field are approximated by their mean value on each cell
\begin{align}
    \tilde{f}_{i,j} = \fint_{K_i^j} \tilde{f} dxdv, \ i=0,...I, j\in\mathbb{Z},\\
    g_{j}^{n} = \fint_{ \leftindex^n  {K}^j} g dtdv, \ n=0,...N-1, j\in\mathbb{N}^\star,\\
    E_i^n = \fint_{\leftindex^n  {K}_i}E dtdx, \  i=0,...I,  n=1,...,N-1.
\end{align}
For the sake of conciseness, we introduce the following set of indices:

\begin{align}
    \mathcal{N}(\mathcal{O}) := \Big \lbrace (i,j) \in \Iintv{0,I} \times \mathbb{Z} \: : \: \overline{K_{i}^{j}} \subset \mathcal{O} \Big \rbrace = \Iintv{1,I-1} \times \mathbb{Z},\\
    \mbox{(Incoming part)\ \  } \ \mathcal{N}^{-}(\partial{\mathcal{O}}):= \{0\}\times \mathbb{Z}^{+,\star} \cup \{I\}\times \mathbb{Z}^{-,\star},\\
    \mbox{(Outgoing part)\ \  } \ \mathcal{N}^{+}(\partial{\mathcal{O}}) := \{0\}\times \mathbb{Z}^- \cup \{I\}\times \mathbb{Z}^+.
\end{align}
The first set corresponds to interior cells while the two others sets correspond to cells whose frontier lies at the boundary. Note that $\mathcal{N}(\mathcal{O}) \cup \mathcal{N}^{+}(\partial{\mathcal{O}}) \cup \mathcal{N}^{-}(\partial{\mathcal{O}}) = \Iintv{0,I} \times \mathbb{Z}.$ A sketch of the mesh is given in Figure \ref{fig:enter-label}.
\begin{figure}[h!]
    \centering
    \includegraphics[width=0.4\linewidth]{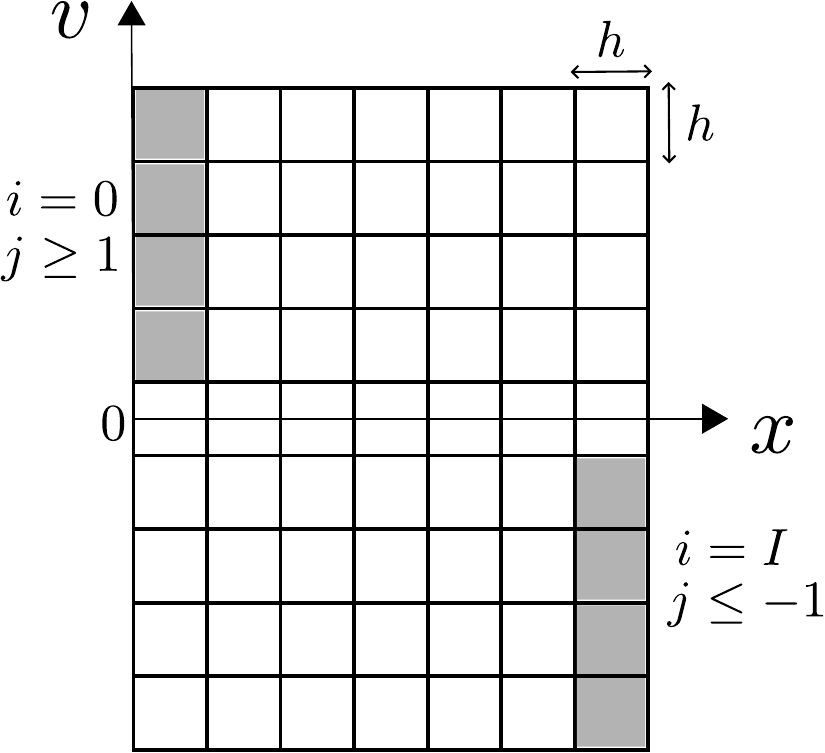}
    \caption{Partition of the phase plane in square cells of side $h$. Grey cells correspond to injection and absorption condition at $x=0$ and $x=1$ and are associated with the set of indices $\mathcal{N}^{-}(\partial \mathcal{O})$.}
    \label{fig:enter-label}
\end{figure}

We now define the sequence of approximate solutions we consider. For each values of $h$ and $\Delta t > 0$, we consider the Lipschitz continuous in time and piecewise constant in the phase-space function denoted $f_{h,\Delta t}$ defined for any $(t,x,v) \in \Omega_{r}$ by 

\begin{equation} \label{m_step_f_function}
f_{h,\Delta t} (t,x,v) = \sum_{i=0}^{I} \sum_{j\in\mathbb{Z}}  \Big(\frac{f_{i,j}^{n+1}(t-t_{n}) }{\Delta t} + \frac{f_{i,j}^{n}(t_{n+1}-t) }{\Delta t} \Big) \mathbb{1}_{K_{i,j}}(x,v), \quad t \in [t_{n},t_{n+1}),
\end{equation}
where the values $f^{n}_{i,j} \in \mathbb{R}$ are defined using the upwind scheme as follows. In $\mathcal{N}(\mathcal{O}) \cup  \mathcal{N}^{+}(\partial \mathcal{O})$ we define for $(n,i,j) \in \Iintv{0,N-1} \times \big( \mathcal{N}(\mathcal{O}) \cup  \mathcal{N}^{+}(\partial \mathcal{O}) \big),$

\begin{equation} \label{m_upwind_interior_cells}
\dfrac{f_{i,j}^{n+1}-f_{i,j}^{n}}{\Delta t}    +(v_j)^+ \dfrac{f_{i,j}^n-f_{i-1,j}^n}{h}   +(v_j)^- \dfrac{f_{i+1,j}^n-f_{i,j}^n}{h}    +\gamma (E_i^n)^+ \dfrac{f_{i,j}^n-f_{i,j-1}^n}{h}  +\gamma (E_i^n)^- \dfrac{f_{i,j+1}^n-f_{i,j}^n}{h} = 0,
\end{equation}
where $(\cdot)^{+} = \max\lbrace 0,\cdot\rbrace$ and $(\cdot)^{-} = \min\lbrace 0,\cdot\rbrace$ denote the positive and negative part functions.
At the incoming part of the boundary, we prescribed on the values for $(n,i,j) \in \Iintv{0,N-1} \times \mathcal{N}^{-}(\partial \mathcal{O}):$
\begin{align} 
& f^{n}_{0,j} =g_j^n, j \geq 1, \qquad f^{n}_{I,j} = 0 , j \leq -1 \label{m_upwind_bc}.
\end{align}
For the initial data, we define for $(i,j) \in \mathcal{N}(\mathcal{O}) \cup \mathcal{N}^{+}(\partial \mathcal{O})$,
\begin{equation} \label{m_upwind_initial_data}
    f_{i,j}^{0} = \tilde{f}_{i,j}.
\end{equation}
Note that in \eqref{m_upwind_bc} the incoming boundary conditions also appear at the initial step $n=0$. This choice appears more convenient for the estimates.
It is a known fact that the upwind scheme is positive and stable if $h$ and $\Delta t$ obey a CFL type condition. Hence, we set 
\begin{equation}
    \mu := \dfrac{\Delta t}{h}.
\end{equation}
The scheme equivalently writes for $(n,i,j) \in \Iintv{0,N-1} \times \big( \mathcal{N}(\mathcal{O}) \cup  \mathcal{N}^{+}(\partial \mathcal{O}) \big),$

\begin{align} \label{m_convex_comb_f}
   &f_{i,j}^{n+1}  = (1-\mu (v_j)^+ + \mu (v_j)^- - \mu \gamma (E_i^n)^+ + \mu \gamma (E_i^n)^-   ) f_{i,j}^n -   \mu (v_j)^- f_{i+1,j}^n  +   \mu (v_j)^+ f_{i-1,j}^n  \\
        &+ \mu \gamma (E_i^n)^+ f_{i,j-1}^n -  \mu \gamma (E_i^n)^- f_{i,j+1}^n. \nonumber
\end{align}
Observe that $f^{n+1}_{i,j}$ is a convex combination of the values  $f^n_{i\pm 1,j},f^n_{i,j\pm 1}$ if the data have compact support in velocity and if $\mu$ is small enough. This implies that the scheme preserves the positivity of the data and their $L^{\infty}$ bounds as we prove here after.
\begin{lemme}[Maximum principle]
    Assume that $T_{r+1}-T_r<1$ and that  $\tilde{f} \in L^{\infty}(\mathcal{O})$ and $g \in L^{\infty}\big( (T_{r},T_{r+1}) \times \mathbb{R}^{+} \big)$ vanish as soon as $|v| > A$ for some $A \geq 0.$ Then, provided the following the CFL condition 
    \begin{equation}
        \mu \leq \mu_0 := \dfrac{1-(T_{r+1}-T_{r})}{A+3/2+|\gamma| \| E \|_{L^{\infty}(Q_r)}}
    \label{equation/CFL}    
    \end{equation}
    holds, the scheme satisfies the maximum principle:
    \begin{eqnarray} \label{m_discrete_maximum_principle}
        0 \leq  f^{n}_{i,j} \leq \max \big\lbrace \underset{j \geq 1} \sup \:  g^{n}_{j} ; \underset{(i,j) \in \mathcal{N}(\mathcal{O}) \cup \mathcal{N}^{+}(\partial{ \mathcal{O}})} \sup \tilde{f}_{i,j}  \big \rbrace
    \end{eqnarray}  
    for all  $(n,i,j) \in \Iintv{0, N} \times \Iintv{0,I} \times \mathbb{Z}.$
\end{lemme}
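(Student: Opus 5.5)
The plan is an induction on $n$, using the convex-combination form \eqref{m_convex_comb_f} of the scheme. Write $M$ for the right-hand side of \eqref{m_discrete_maximum_principle}. The claim is that for every $n$, every interior or outgoing value $f^n_{i,j}$ lies in $[0,M]$. The incoming boundary values are $f^n_{0,j}=g^n_j$ and $f^n_{I,j}=0$; they are nonnegative means of the nonnegative function $g$ and are bounded by $\sup_j g^n_j\le M$, so they satisfy the bound for free. At $n=0$ the interior values are the cell means $\tilde f_{i,j}\ge 0$ of the nonnegative datum $\tilde f$, which handles the base case.

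\textbf{Finite velocity range.} Before the induction step I need to know that only finitely many velocity indices matter. Since the data vanish for $|v|>A$, I will show by induction that $f^n_{i,j}=0$ for all $|v_j|$ beyond some threshold. The difficulty is that the $E$-terms in \eqref{m_convex_comb_f} shift the support by one velocity cell per time step. So the support grows by at most $h$ per step, and after $N$ steps it reaches at most $A+Nh+h/2$.

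This is the step I expect to be the main obstacle, and it explains the unusual CFL constant. Since $Nh=(T_{r+1}-T_r)/\mu$, the support bound depends on $\mu$ itself. Consequently, on the nonzero cells one has $\mu|v_j|\le \mu(A+1/2)+(T_{r+1}-T_r)$ (modulo one extra $h$). Combined with \eqref{equation/CFL}, this should give
\[
\mu|v_j|+\mu|\gamma||E^n_i|\le \mu\bigl(A+3/2+|\gamma|\|E\|_{L^\infty(Q_r)}\bigr)+(T_{r+1}-T_r)\le 1.
\]
The hypothesis $T_{r+1}-T_r<1$ ensures $\mu_0>0$. On cells outside the support, all the relevant neighbours vanish, so $f^{n+1}_{i,j}=0$ there regardless of the sign of the diagonal coefficient. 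More precisely, a cell with $|v_j|$ larger than the current support bound plus $h$ has all its neighbours zero. This lets me restrict attention to cells where the bound above holds. I must also use $|E^n_i|\le\|E\|_{L^\infty}$, which holds because $E^n_i$ is a mean of $E$.

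\textbf{Induction step.} On the relevant cells, the coefficients in \eqref{m_convex_comb_f} are the following.
\begin{itemize}
\item The off-diagonal coefficients $\mu(v_j)^+$, $-\mu(v_j)^-$, $\mu\gamma(E^n_i)^\pm$ are nonnegative. For this I have to interpret $\gamma(E^n_i)^+$ and $\gamma(E^n_i)^-$ correctly when $\gamma<0$: the upwinding should be on $\gamma E$, and I will read the scheme that way.
\item The diagonal coefficient is $1-\mu|v_j|-\mu|\gamma E^n_i|\ge 0$, by the bound above.
\item The coefficients sum to $1$.
\end{itemize}
Hence $f^{n+1}_{i,j}$ is a convex combination of values at step $n$. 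Those values are either interior or outgoing values, which lie in $[0,M]$ by the induction hypothesis, or boundary values $g^n_j$ and $0$, which lie in $[0,M]$ as noted. Therefore $0\le f^{n+1}_{i,j}\le M$, which closes the induction.
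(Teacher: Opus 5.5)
Your proposal is correct and follows the paper's proof. Both arguments run an induction on $n$, use the fact that the velocity support grows by at most one cell per step so that $\mu|v_j|\le \mu(A+3h/2)+(T_{r+1}-T_r)$ on the relevant cells (with $h\le 1$, which you use implicitly just as the paper does), and then read off the convex-combination form of \eqref{m_convex_comb_f}. You are a bit more careful than the paper in two places: you take the velocity upwinding on $\gamma E$ so that the off-diagonal coefficients stay nonnegative when $\gamma<0$, and you obtain the upper bound directly from the convex combination (checking that cells beyond the support stay zero) rather than reducing to positivity of $m-f$ as the paper does.
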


\begin{proof}  Since the scheme is linear, we only prove the positivity as the $L^{\infty}$ the upper bound can be deduced easily by replacing $f^{n}_{i,j}$ by $ m - f_{i,j}^{n}$ with $m$ being the upper bound in \eqref{m_discrete_maximum_principle}. The proof is by induction. Observe that the boundary data are nonnegative so $f^{n}_{i,j}$ is nonnegative for $(i,j) \in \mathcal{N}^{-}(\partial \mathcal{O})$ thanks to \eqref{m_upwind_bc}. As for the other values.  The mean value $f^{0}_{i,j}$ vanishes if  $v_{j-1/2}> A$ and $j\geq 0$ or if $v_{j+1/2}< -A$ and $j\leq 0$, in other words $f^{0}_{i,j}=0$ if 

    $$
    j> \dfrac{A}{h}+\dfrac{1}{2}, \mbox{ or } j< -\dfrac{A}{h}-\dfrac{1}{2}.
    $$
    We set $j_{max} = \left\lfloor \dfrac{A}{h}+\dfrac{1}{2}\right\rfloor +1$. After one iteration of \eqref{m_convex_comb_f}, the support in velocity of the solution is extended by at most one cell, hence by a trivial induction, the support of  the sequence $f^n_{i,j}$ after $n$ iterations is contained in the set of indices $j$ such that $|j|\leq j_{max} + n$. By bounding $j_{max}$, we discover that the support of $f_{i,j}^n$ is confined to the $j$'s such that $\vert j \vert \leq \frac{A}{h}+\frac{3}{2}+n$.  Since $nh = \frac{t_n-T_{r}}{\mu}$, inside the support of $f_{i,j}^n$, we have $|v_j|\leq  A+\frac{3}{2}h+\frac{T_{r+1}-T_r}{\mu}$. To get a convex combination in \eqref{m_convex_comb_f}, we thus look for $\mu \geq 0$ such that

    $$
    1-\mu (v_j)^+ + \mu (v_j)^- - \mu \gamma (E_i^n)^+ + \mu \gamma (E_i^n)^- \geq 0. 
    $$
It suffices that $1\geq \mu(|v_j| + |\gamma|| E_{i}^n |)$ for all  $n\leq N$. By the bound on $|v_j|$ and the fact that the electric field is bounded, it suffices that
    
    $$
    1\geq \mu ( A+\frac{3}{2}h+\frac{T_{r+1}-T_r}{\mu} + |\gamma| \|E\|_{L^{\infty}})
    $$
    By rearranging terms, we find that the previous inequality is equivalent to
    $$
    \mu\leq  \dfrac{1-(T_{r+1}-T_{r})}{A+3/2h+|\gamma| \| E \|_{L^{\infty}(Q_r)}}.
    $$

    Since $0 \leq h \leq 1$, a sufficient condition for the positivity is the following:

    $$
    \mu \leq \mu_0 := \dfrac{1-(T_{r+1}-T_{r})}{A+3/2+|\gamma| \| E \|_{L^{\infty}(Q_r)}}\ \textnormal{ and } \ T_{r+1}-T_r\leq 1.
    $$
\end{proof}

The scheme written under the form \eqref{m_upwind_bc}-\eqref{m_upwind_initial_data} is not convenient to deduce $BV$ estimates when we perform discrete integration by parts. We therefore rewrite it a last time, by extending the solution $f_{h,\Delta t}$ to fictive cells.  The extension is given below 

\begin{equation}
    x_{-1}=-\frac{h}{2} \ ; \  \  f_{-1,j}^n = f_{0,j}^n, j \geq 0, \ n \in \Iintv{0,N}.
\end{equation}
\begin{equation}
    x_{I+1}=1+\frac{h}{2} \ ; \  f_{I+1,j}^n =f_{I,j}^n, \ j \leq 0, \ n \in \Iintv{0,N}.
\end{equation}
We now define a correction term  $e_{i,j}^n$  chosen so that \eqref{m_upwind_interior_cells} is written in $\Iintv{0,I} \times \mathbb{Z}$. We thus define for all $(n,i,j) \in \Iintv{0, N-1} \times \Iintv{0,I}\times \mathbb{Z} $,
\begin{eqnarray}
\label{m_upwind_corrected}
    f_{i,j}^{n+1}  & = & (1-\mu (v_j)^+ + \mu (v_j)^- - \mu \gamma (E_i^n)^+ + \mu \gamma (E_i^n)^-   ) f_{i,j}^n -   \mu (v_j)^- f_{i+1,j}^n  +   \mu (v_j)^+ f_{i-1,j}^n \nonumber \\
    &+&  \mu \gamma (E_i^n)^+ f_{i,j-1}^n -  \mu \gamma (E_i^n)^- f_{i,j+1}^n + e_{i,j}^n. 
\end{eqnarray}
with 
\begin{equation}
  e_{i,j}^n= 0, \quad  (n,i,j) \in \Iintv{0,N-1} \times  \mathcal{N}(\mathcal{O}) \cup \mathcal{N}^{+}(\partial \mathcal{O}) .\label{m_corrective_term_inside}
\end{equation} 
By writing the equality in the complementary part of $\mathcal{N}(\mathcal{O}) \cup \mathcal{N}^{+}(\partial \mathcal{O})$ and taking into account the boundary conditions and the extensions, we obtain for $(n,i,j) \in \Iintv{0,N-1} \times \mathcal{N}^{-}(\partial \mathcal{O})$

\begin{equation}
    e_{0,j}^n =  g_j^{n+1}-(1 - \mu \gamma (E_0^n)^+ + \mu \gamma (E_0^n)^-   ) g_{j}^n  - \mu \gamma (E_0^n)^+ f_{0,j-1}^n +  \mu \gamma (E_0^n)^- g_{j+1}^n, \quad j\geq 1, \label{m_corrective_terms_bc_1} 
\end{equation}

\begin{equation}
    e_{I,j}^{n} = 0,\quad j \leq -2, \label{m_corrective_terms_bc_2}\\
\end{equation} 

\begin{equation}
     e_{I,-1}^n = \mu\gamma (E_I^n)^- f_{I,0}^n. \label{m_corrective_terms_bc_3}
\end{equation}
So, the scheme \eqref{m_upwind_interior_cells}-\eqref{m_upwind_initial_data} is equivalent to \eqref{m_upwind_initial_data}, \eqref{m_upwind_corrected}-\eqref{m_corrective_terms_bc_3}.

\subsection{Discrete BV-estimates}
We shall prove several $BV$ estimates that are uniform in $h$ and $\Delta t$. To do so, we introduce
$$
v_h(v) = \sum_{j\in\mathbb{Z}} v_j \mathbb{1}_{(v_{j-1/2},v_{j+1/2})}(v).
$$
With a slight abuse in the notations, we define finite differences  $d_t,d_x,d_v$ for $f_{h,\Delta t}$ as follows:

\begin{equation}
    d_t f_{i,j}^n := f_{i,j}^{n+1}-f_{i,j}^n; \  d_x f_{i,j}^n := f_{i+1,j}^{n}-f_{i,j}^n; \  d_v f_{i,j}^n := f_{i,j+1}^{n}-f_{i,j}^n;
\end{equation}
Since $f_{h,\Delta t}$ is piecewise constant in space, it has an explicit total variation and $L^1$ norm in $\mathcal{O}$ for $t \in [t^{n},t^{n+1}):$
\begin{equation}
       \forall k \in \mathbb{N}, \quad  \Big\| |v_h|^kf_{h,\Delta t}(t) \Big \|_{L^{1}(\mathcal{O})}= \sum_{i=0}^{I} \sum_{j\in\mathbb{Z}} h^{2}|v_j|^k\Big( f_{i,j}^{n+1} \frac{t-t_{n}}{\Delta t} + f_{i,j}^{n} \frac{t_{n+1}-t}{\Delta t} \Big).
\end{equation}
The total variation in $\mathcal{O}$ is given for $t \in [t_{n},t_{n+1})$ by
$$TV(f_{h,\Delta t}(t),\mathcal{O}) = TV_{x}(f_{h,\Delta t}(t),\mathcal{O}) +TV_{v}(f_{h,\Delta t}(t),\mathcal{O})$$
where 

\begin{equation}
   TV_{x}(f_{h,\Delta t}(t),\mathcal{O}) :=  \sum_{i=0}^{I-1} \sum_{j\in\mathbb{Z}} h \Big( |d_x f_{i,j}^{n+1}| \frac{t-t_{n}}{\Delta t} +|d_x f_{i,j}^{n}| \frac{t_{n+1}-t}{\Delta t} \Big) ,
\end{equation}

\begin{equation}
    TV_{v}(f_{h,\Delta t}(t),\mathcal{O}) :=  \sum_{i=0}^{I} \sum_{j\in\mathbb{Z}} h  \Big( |d_v f_{i,j}^{n+1}| \frac{t-t_{n}}{\Delta t} +|d_v f_{i,j}^{n}| \frac{t_{n+1}-t}{\Delta t} \Big).
\end{equation}
As we may consider data that are not compactly supported in velocity, we need also to control the moment in velocity of $f_{h,\Delta t}$ in $BV(\mathcal{O}).$ We thus extend the preceding notation, and define for all $p\in\mathbb{N}$ and $t \in [t_{n},t_{n+1}):$

\begin{equation}
    TV_{x}^p(f_{h,\Delta t}(t),\mathcal{O}) :=    \sum_{i=0}^{I-1} \sum_{j\in\mathbb{Z}} h \Big( |v_j^pd_x f_{i,j}^{n+1}|\frac{t-t_{n}}{\Delta t} + |v_j^pd_x f_{i,j}^{n}|\frac{t_{n+1}-t}{\Delta t} \Big),
\end{equation}
\begin{equation}
    TV_{v}^p(f_{h,\Delta t}(t),\mathcal{O}) :=    \sum_{i=0}^{I} \sum_{j\in\mathbb{Z}} h  \Big( |v_j^pd_v f_{i,j}^{n+1}|\frac{t-t_{n}}{\Delta t} + |v_j^pd_v f_{i,j}^{n}|\frac{t_{n+1}-t}{\Delta t} \Big).
\end{equation}

The following lemma gives elementary bounds on the discretization of the data and are found by using Green formula on adjacent cells.

\begin{lemme}[Consistency estimates on the data] \label{m_consistency_data_estimates}
For all $p\in\mathbb{N}^{\star}$, there exists positive numbers  $a_0,...,a_{p-1}$ such that: 
    \begin{equation}
     \forall (i,j) \in \Iintv{0,I} \times \mathbb{Z}, \quad   |v_j|^p \tilde{f}_{i,j} \leq    h \sum_{k=0}^{p-1} a_k \fint_{K_{i,j}}v^k  \tilde{f}dxdv + \fint_{K_{i,j}}v^p  \tilde{f} dxdv, 
   \end{equation}

    \begin{equation}
\forall (n,j) \in \Iintv{0,N} \times \mathbb{Z}^{+,\star}, \quad       |v_j|^p g^n_{j} \leq   h \sum_{k=0}^{p-1} a_k \fint_{K_{n,j}}v^k  gdtdv + \fint_{K_{n,j}}v^p  gdtdv,
   \end{equation}

    \begin{multline}
     \forall (i,j) \in \Iintv{0,I} \times \mathbb{Z}, \quad  |v_j^p d_v\tilde{f}_{i,j}| \leq  h \sum_{k=0}^{p-1} a_k \left( \fint_{K_{i,j+1}}|v|^k  \tilde{f}dxdv +  \fint_{K_{i,j}}|v|^k \tilde{f}dxdv\right) \\
       + \dfrac{1}{h} |\partial_v(v^p\tilde{f})|(]x_{i-1/2},x_{i+1/2}[\times]v_{j-1/2},v_{j+3/2}[),
   \end{multline}

    \begin{equation}
\forall (i,j) \in \Iintv{0,I-1} \times \mathbb{Z}, \quad |v_j^p d_x\tilde{f}_{i,j}| \leq    \sum_{k=0}^{p} a_k h^{p-k-1}  |\partial_x(v^k  \tilde{f})|(]x_{i-1/2},x_{i+3/2}[\times ]v_{j-1/2},v_{j+1/2}[),
   \end{equation}

    \begin{multline} 
     \forall (n,j) \in \Iintv{0,N} \times \mathbb{Z}^{+,\star}, \quad  |v_j^p d_vg_{j}^n| \leq  h \sum_{k=0}^{p-1} a_k \left( \fint_{K_{n,j+1}}|v|^k  g dtdv +  \fint_{K_{n,j}}|v|^k g dtdv\right)     \\ + \dfrac{1}{\Delta t} (|\partial_v(v^pg)|(]t_n,t_{n+1}[\times]v_{j-1/2},v_{j+3/2}[),
       \label{m_estimate_vp_dvg}
   \end{multline}

    \begin{equation}
     \forall (n,j) \in \Iintv{0,N-1} \times \mathbb{Z}^{+,\star}, \quad |v_j^p d_tg_{j}^n| \leq  \sum_{k=0}^{p} a_k h^{p-k-1}  | \partial_t( |v|^k g)|(]t_{n},t_{n+2}[\times ]v_{j-1/2},v_{j+1/2}[).
   \end{equation}

   \begin{multline}
 \forall j\in\mathbb{Z}^+, \  |v_j|^p |\tilde{f}_{1,j}-g_j^0| \leq \sum_{k=0}^p a_k h^{p-k-1} (|\partial_x(|v|^k\tilde{f})|(K_{0,j}\cup K_{1,j}) + |\partial_t(|v|^kg)|(\leftindex^0 K_{j})  \\+ \|v^k (T\tilde{f}(0,.)-Tg(0,.))\|_{L^1(v_{j-1/2},v_{j+1/2})}) 
  + \| v^k T\tilde{f}(0,.)\|_{L^1(v_{j-1/2},v_{j+1/2})})
   \end{multline}

\label{lemme/estimationschema}
\end{lemme}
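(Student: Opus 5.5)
The plan is to prove each inequality by comparing the cell value $v_j$ with the variable $v$ inside the cell, and by converting cell means or differences of cell means into total variations through the Green formula (Theorem \ref{divergence_formula}) on one cell or on a pair of adjacent cells.

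\textbf{Moment bounds.} On $K_{i,j}$ we have $|v-v_j|\le h/2$, so $|v_j|\le |v|+h/2$. Expanding binomially,
\[
|v_j|^p\le \sum_{k=0}^{p}\binom{p}{k}(h/2)^{p-k}|v|^k
= |v|^p + h\sum_{k=0}^{p-1}\binom{p}{k}2^{k-p}h^{p-k-1}|v|^k .
\]
Since $h\le 1$, the factor $h^{p-k-1}\le 1$ for $k\le p-1$. We multiply by $\tilde f\ge 0$ and average over $K_{i,j}$, using $|v_j|^p\tilde f_{i,j}=\fint |v_j|^p\tilde f$. This gives the first estimate with $a_k=\binom pk 2^{k-p}$. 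The same computation on $\leftindex^n K^j$ gives the estimate for $g$. The constants $a_k$ are then enlarged so that they also serve the later estimates.

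\textbf{$v$-differences.} We write
\[
v_j^p d_v\tilde f_{i,j}=\Big(v_j^p\tilde f_{i,j+1}-\fint_{K_{i,j+1}}v^p\tilde f\Big)-\Big(v_j^p\tilde f_{i,j}-\fint_{K_{i,j}}v^p\tilde f\Big)+\Big(\fint_{K_{i,j+1}}v^p\tilde f-\fint_{K_{i,j}}v^p\tilde f\Big).
\]
The first two brackets are bounded exactly as above, with $|v-v_j|\le 3h/2$ on $K_{i,j+1}$.

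For the last bracket we set $u=v^p\tilde f$ and use the translation estimate for BV functions:
\[
\Big|\fint_{K_{i,j+1}}u-\fint_{K_{i,j}}u\Big|
=\frac1{h^2}\Big|\int_{K_{i,j}}\big(u(x,v+h)-u(x,v)\big)\Big|
\le \frac{1}{h^2}\,h\,|\partial_v u|\big(]x_{i-1/2},x_{i+1/2}[\times]v_{j-1/2},v_{j+3/2}[\big).
\]
This is obtained by approximating $u$ by smooth functions (strict convergence) or via Theorem \ref{divergence_formula}, and it yields the factor $1/h$.

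For $g$ the cell is $[t_n,t_{n+1})\times(v_{j-1/2},v_{j+1/2})$ of area $\Delta t\, h$, which gives the factor $1/\Delta t$ in \eqref{m_estimate_vp_dvg}.

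\textbf{$x$- and $t$-differences.} Here $v_j^p$ is common to both cells, so we write $v_j^p=(v_j-v+v)^p$ and expand inside the integral:
\[
v_j^p d_x\tilde f_{i,j}=\sum_{k}\binom pk\fint (v_j-v)^{p-k}\big(v^k\tilde f(x+h,v)-v^k\tilde f(x,v)\big).
\]
The translation estimate in $x$ gives $h^{-1}|\partial_x(v^k\tilde f)|$ over the union of the two cells, and $|v_j-v|^{p-k}\le h^{p-k}$. Hence the $k$-th term is bounded by $a_k h^{p-k-1}|\partial_x(v^k\tilde f)|$. The time differences $d_tg$ are treated identically. For $p$ odd, the expression $v^k$ is replaced by $|v|^k$ together with the sign, since $v_j$ has the sign of $v$ on the cell.

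\textbf{Corner term (main obstacle).} The last estimate compares $\tilde f_{1,j}$, a mean over $K_{1,j}\subset\mathcal O$, with $g_j^0$, a mean in $(t,v)$ on the face $x=0$. I would split this difference through the traces:
\[
\tilde f_{1,j}-\fint T\tilde f(0,\cdot),\qquad
\fint T\tilde f(0,\cdot)-\fint Tg(0,\cdot),\qquad
\fint Tg(0,\cdot)-g_j^0.
\]
The first piece is controlled, via the one-dimensional Green formula in $x$ on $K_{0,j}\cup K_{1,j}$, by $|\partial_x(v^k\tilde f)|$. The last piece is controlled in the same way in $t$ on $\leftindex^0K_j$ by $|\partial_t(v^kg)|$. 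The middle piece is the explicit $L^1$ trace mismatch appearing in the statement.

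The moment weight $v_j^p$ is handled as before by expanding around $v$, and the lower-order remainders produce the extra term $\|v^kT\tilde f(0,\cdot)\|_{L^1}$. The delicate point is justifying these one-dimensional trace identities from Theorem \ref{theoreme/trace}, with the correct $h$-normalizations. That is where I expect most of the care to be needed.
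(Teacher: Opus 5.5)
Your proposal is correct and follows essentially the paper's route. You expand $v_j^p$ around $v$ inside each cell, use Green's formula on one or two adjacent cells to turn differences of cell means into partial total variations (your translation estimate is equivalent to the paper's tent test functions such as $\frac{|v-v_{j+1/2}|-h}{h}$), and split the corner term through $T\tilde f(0,\cdot)$ and $Tg(0,\cdot)$ in the same three pieces as the paper. One small correction: the weighted expansion does not generate the extra $\|v^kT\tilde f(0,\cdot)\|_{L^1}$ term, because Green's formula in $x$ on $K_{0,j}\cup K_{1,j}$ and on $K_{0,j}$ already bounds the first corner piece by $\sum_k a_k h^{p-k-1}|\partial_x(|v|^k\tilde f)|(K_{0,j}\cup K_{1,j})$ alone, so that term is merely a harmless surplus in the stated upper bound.
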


\begin{proof}
We have
   \begin{eqnarray*}
       v_j^p \tilde{f}_{i,j} &=&  \fint_{K_{i,j}}(v_j-v + v)^p \tilde{f}dxdv 
        =  \sum_{k=0}^{p} \binom{p}{k} \fint_{K_{i,j}}v^k (v_j-v)^{p-k} \tilde{f}dxdv.
   \end{eqnarray*}
So,
    \begin{eqnarray*}
       |v_j|^p \tilde{f}_{i,j} &\leq &   \sum_{k=0}^{p} \binom{p}{k} h^{p-k} \fint_{K_{i,j}}v^k  \tilde{f}dxdv \leq h \sum_{k=0}^{p-1} \binom{p}{k} h^{p-1-k} \fint_{K_{i,j}}v^k  \tilde{f}dxdv + \fint_{K_{i,j}}v^p  \tilde{f}dxdv.
   \end{eqnarray*}
The same computation holds for $v_j^pg_j^n$. Using the same idea we have
    \begin{eqnarray*}
       v_j^p d_v\tilde{f}_{i,j} &=&  \sum_{k=0}^{p-1} \binom{p}{k} \fint_{K_{i,j+1}}v^k (v_j-v)^{p-k} \tilde{f}dxdv -  \fint_{K_{i,j}}v^k (v_j-v)^{p-k} \tilde{f}dxdv \\
       &+ &\fint_{K_{i,j+1}} v^p \tilde{f} dxdv - \fint_{ K_{i,j}} v^p \tilde{f} dxdv.
   \end{eqnarray*}

We then observe that

\begin{eqnarray*}
    \fint_{K_{i,j+1}} v^p \tilde{f} dxdv - \fint_{ K_{i,j}} v^p \tilde{f} dxdv & = & \dfrac{1}{h} \int_{x_{i-1/2}}^{x_{i+1/2}} \int_{v_{j-1/2}}^{v_{j+3/2}} v^p\tilde{f} \partial_v\left(\dfrac{|v-v_{j+1/2}|-h}{h}\right) dxdv.
\end{eqnarray*}

Using the Green formula we obtain,
\begin{eqnarray*}
    \fint_{K_{i,j+1}} v^p \tilde{f} dxdv - \fint_{ K_{i,j}} v^p \tilde{f} dxdv & = & -\dfrac{1}{h} \int_{x_{i-1/2}}^{x_{i+1/2}} \int_{v_{j-1/2}}^{v_{j+3/2}}  \left(\dfrac{|v-v_{j+1/2}|-h}{h}\right) d(\partial_v(v^p\tilde{f})).
\end{eqnarray*}

Taking the absolute value we get,

\begin{eqnarray*}
    \left|\fint_{K_{i,j+1}} v^p \tilde{f} dxdv - \fint_{ K_{i,j}} v^p \tilde{f} dxdv \right|& \leq & \dfrac{1}{h} |\partial_v(v^p\tilde{f})|(]x_{i-1/2},x_{i+1/2}[\times]v_{j-1/2},v_{j+3/2}[).
\end{eqnarray*}

and then we conclude
    \begin{eqnarray*}
       |v_j^p d_v\tilde{f}_{i,j}| &\leq & h \sum_{k=0}^{p-1} \binom{p}{k} \left(\left(\frac{3h}{2}\right)^{p-k-1} + \left(\frac{h}{2}\right)^{p-k-1} \right)  \fint_{K_{i,j+1}}|v|^k  \tilde{f}dxdv +  \fint_{K_{i,j}}|v|^k \tilde{f}dxdv \\
       &+ &\dfrac{1}{h} |\partial_v(v^p\tilde{f})|(]x_{i-1/2},x_{i+1/2}[\times]v_{j-1/2},v_{j+3/2}[).
   \end{eqnarray*}
The proof is similar for $v_j^pd_v g_j^n$.
We reproduce the same computation for the finite difference in space $d_x$. We have

    \begin{eqnarray*}
       v_j^p d_x\tilde{f}_{i,j} &=&  \sum_{k=0}^{p} \binom{p}{k} \fint_{K_{i+1,j}}v^k (v_j-v)^{p-k} \tilde{f}dxdv -  \fint_{K_{i,j}}v^k (v_j-v)^{p-k} \tilde{f}dxdv \\
       & = &  \dfrac{1}{h}\sum_{k=0}^{p} \binom{p}{k} \int_{x_{i-1/2}}^{x_{i+3/2}}\int_{v_{j-1/2}}^{v_{j+1/2}}v^k (v_j-v)^{p-k} \tilde{f}  \partial_x\left(\dfrac{|x-x_{i+1/2}|-h}{h}\right)  dxdv \\
        & = &  -\dfrac{1}{h}\sum_{k=0}^{p} \binom{p}{k} \int_{x_{i-1/2}}^{x_{i+3/2}}\int_{v_{j-1/2}}^{v_{j+1/2}} (v_j-v)^{p-k} \left(\dfrac{|x-x_{i+1/2}|-h}{h}\right)  d(v^k \partial_x \tilde{f}).
   \end{eqnarray*}
Eventually taking the absolute value, we get the expected result. The computation is the same for the term $v_j^pd_t g_j^n$.  Finally, we shall prove the last point and control the difference of the data in a corner. We first expand their difference as 

$$
v_j^p (\tilde{f}_{1,j}-g_j^0) = v_j^p (\tilde{f}_{1,j}-\frac{1}{h}\int_{v_{j-1/2}}^{v_{j+1/2}}T\tilde{f}dv) + v_j^p \frac{1}{h}\int_{v_{j-1/2}}^{v_{j+1/2}}T\tilde{f}-Tg dv + v_j^p (\frac{1}{h}\int_{v_{j-1/2}}^{v_{j+1/2}}Tgdv-g_j^0).
$$
We bound the  first term, as the other one are tackled similarly. As $v_j^p = \sum_{k=0}^p a_k v^k (v_j-v)^{p-k}$, we expand the first term  as

\begin{multline}
 v_j^p (\tilde{f}_{1,j}-\frac{1}{h}\int_{v_{j-1/2}}^{v_{j+1/2}}T\tilde{f}dv) = \sum_{k=0}^p a_k   (\int_{K_{1,j} \cup K_{0,j}} \tilde{f} v^k (v_j-v)^{p-k} dxdv-\frac{1}{h}\int_{v_{j-1/2}}^{v_{j+1/2}} v^k (v_j-v)^{p-k}T\tilde{f}dv) 
 \\  +  \sum_{k=0}^p a_k   (-\int_{K_{0,j}} \tilde{f} v^k (v_j-v)^{p-k} dxdv+\frac{1}{h}\int_{v_{j-1/2}}^{v_{j+1/2}} v^k (v_j-v)^{p-k}T\tilde{f}dv) 
 \\  +   \sum_{k=0}^p a_k  -\frac{1}{h}\int_{v_{j-1/2}}^{v_{j+1/2}} v^k (v_j-v)^{p-k}T\tilde{f}dv. 
\end{multline}

Now, we apply Green's formula for BV functions to  $v^k (v_j-v)^{p-k}\tilde{f}$ and with the field $(x-2h,0)$:

\begin{multline}   \int_{K_{1,j}\cup K_{0,j}} \tilde{f} v^k (v_j-v)^{p-k} dxdv =  -   \int_{K_{1,j}\cup K_{0,j}} \tilde{f} v^k (v_j-v)^{p-k} (x-2h) d(\partial_x \tilde{f}) \\+ 2h\int_{v_{j-1/2}}^{v_{j+1/2}} v^k (v_j-v)^{p-k}T\tilde{f}dv.
\end{multline}

We thus get the desired bound 

$$
\left| \fint_{K_{1,j}\cup K_{0,j}} \tilde{f} v^k (v_j-v)^{p-k} dxdv - \fint_{v_{j-1/2}}^{v_{j+1/2}} v^k (v_j-v)^{p-k}T\tilde{f}dv \right| \leq h^{p-k-1} |\partial_x(|v|^k \tilde{f})|(K_{1,j}\cup K_{0,j}).
$$

In the same fashion, we have that 

$$
\left| \fint_{ K_{0,j}} \tilde{f} v^k (v_j-v)^{p-k} dxdv - \fint_{v_{j-1/2}}^{v_{j+1/2}} v^k (v_j-v)^{p-k}T\tilde{f}dv \right| \leq h^{p-k-1} |\partial_x(|v|^k \tilde{f})|( K_{0,j}).
$$

Finally, it is easy to see that 

$$
\left|   -\frac{1}{h}\int_{v_{j-1/2}}^{v_{j+1/2}} v^k (v_j-v)^{p-k}T\tilde{f}dv \right| \leq h^{p-k-1} \| v^k T\tilde{f}\|_{L^1(v_{j-1/2},v_{j+1/2})}.
$$

The conclusion follows.

\end{proof}

Before proving the main BV estimates on the approximation $f_{h,\Delta t}$, we need first to estimate the correction terms in terms of the data.

\begin{lemme}[BV-estimates of the corrective terms] \label{m_BV_estimate_corrective_terms}
Let $p \in \mathbb{N}$ and $0 \leq T_{r} < T_{r+1}\leq T$ with $T_{r+1} - T_{r} < 1$. Assume on the data that:
    \begin{itemize}
        \item[a)] for all $k \in \Iintv{0,p}$, $|v|^{k} \tilde{f}\in BV(\mathcal{O}) \cap L^\infty(\mathcal{O})$, $|v|^{k} g\in BV((T_{r},T_{r+1})\times\mathbb{R}^+) \cap L^\infty((T_{r},T_{r+1})\times\mathbb{R}^+)$
        \item[b)] $\underset{ t \in (T_{r},T_{r+1})} \sup \underset{ \Delta t > 0} \sup \dfrac{1}{\Delta t}|\partial_v(|v|^kg)|((t,t+\Delta t)\times\mathbb{R}^{+\star})<\infty$ for $k=0,...,p$;
        \item[c)] $\underset{ t \in (T_{r},T_{r+1})} \sup \underset{ \Delta t > 0} \sup \dfrac{1}{\Delta t}|\partial_t(|v|^kg)|((t,t+\Delta t)\times\mathbb{R}^{+\star})<\infty$ for $k=0,...,p$;
        \item[d)] $\underset{ t \in (T_{r},T_{r+1})} \sup  \int_{\mathbb{R}^{+}} |v|^k g(t,v)dv <\infty$ for $k=0,...,p$.
        \item[e)]  $\tilde{f}$ and $g$ vanish as soon as $|v| > A$ for some $A > 0.$
    \end{itemize}

Then, for all integers $m \leq N$

$$
\max_{0\leq n \leq m}\sum_{i=0}^{I}\sum_{j\in\mathbb{Z}} h |v_j^p d_ve_{i,j}^n|
+
\max_{0\leq n \leq m}\sum_{i=0}^{I-1}\sum_{j\in\mathbb{Z}} h |v_j^p d_xe_{i,j}^n | +  \max_{0\leq n \leq m}\sum_{i=0}^{I}\sum_{j\in\mathbb{Z}} h^{2} |v_j^p e_{i,j}^n| \underset{ \Delta t \rightarrow 0}=O(\Delta t).
$$
    \label{lemme/termeerreur}
\end{lemme}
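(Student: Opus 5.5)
The corrective terms $e^n_{i,j}$ vanish except on the incoming boundary cells, so every sum in the statement reduces to a sum over $j\ge 1$ with $i=0$, plus the single term $e^n_{I,-1}$. The plan is to bound each of these one-dimensional sums by $O(\Delta t)$, uniformly in $n\le m$, using only the explicit formulas \eqref{m_corrective_terms_bc_1}--\eqref{m_corrective_terms_bc_3}, the maximum principle, the CFL relation $\mu=\Delta t/h\le\mu_0$, and Lemma \ref{m_consistency_data_estimates}.

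\textbf{Step 1: rewrite the corrective terms.} For $j\ge1$, rewrite \eqref{m_corrective_terms_bc_1} as
$$e^n_{0,j}= d_t g^n_j+\mu\gamma (E^n_0)^+\big(g^n_j-f^n_{0,j-1}\big)+\mu\gamma(E^n_0)^-\big(g^n_{j+1}-g^n_j\big).$$
For $j\ge 2$ we have $f^n_{0,j-1}=g^n_{j-1}$, so the middle bracket equals $d_v g^n_{j-1}$. Only $j=1$ involves the interior value $f^n_{0,0}$, which is bounded by the maximum principle. The last term $e^n_{I,-1}=\mu\gamma(E^n_I)^- f^n_{I,0}$ is also bounded by the maximum principle.

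\textbf{Step 2: weighted $L^1$ bound.} Multiply by $h^2|v_j|^p$ and sum over $j$. This gives
$$\sum_j h^2|v_j^p|\,|e^n_{0,j}| \le h^2\sum_j|v_j^p d_tg^n_j| + h\,\Delta t\,|\gamma|\,\|E\|_\infty \sum_j |v_j^p d_vg^n_j| + O(h\Delta t).$$
Lemma \ref{m_consistency_data_estimates} controls $|v_j^pd_tg^n_j|$ by $\sum_k a_kh^{p-k-1}|\partial_t(|v|^kg)|((t_n,t_{n+2})\times\cdot)$. Summing over $j$, hypothesis c) bounds this by $C\,\Delta t$ times $h^{-1}$, so after multiplication by $h^2$ it is $O(h\Delta t)$. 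The $d_v$ term is handled the same way using \eqref{m_estimate_vp_dvg}, hypothesis b) and hypothesis d), since the averages $\fint|v|^kg$ summed with the weight $h$ are bounded by $\sup_t\int|v|^kg$.

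\textbf{Step 3: finite differences in $v$ and $x$.} For $d_x e^n_{i,j}$, only $i=0$ (with $d_xe=e_{1,j}-e_{0,j}=-e_{0,j}$) and $i=I-1$ contribute, so the sum is $\sum_j h|v_j^p e^n_{0,j}|$ plus one bounded term. This is one power of $h$ weaker than Step 2. For it I would use $\mu=\Delta t/h\le\mu_0$ and the $1/\Delta t$-normalised hypotheses b) and c) directly:
$$h\sum_j|v_j^pd_tg^n_j| \lesssim \sum_k h^{p-k}\,|\partial_t(|v|^kg)|((t_n,t_{n+2})\times\mathbb{R}^+) = O(\Delta t),$$
and similarly $h\mu\sum_j|v_j^pd_vg^n_j| = O(\Delta t)$, by the same mechanism as in Step 2.

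For $d_v e^n_{0,j}$, differences in $j$ of $d_tg$ and of $d_vg$ appear. By the triangle inequality these reduce to the sums already treated, with a shift $j\mapsto j+1$; the weight changes from $|v_j|^p$ to $|v_{j+1}|^p$, and this change is absorbed by the binomial expansion as in Lemma \ref{m_consistency_data_estimates}. The cell $j=1$, which couples with $f^n_{0,0}$ and $f^n_{0,1}$, contributes $O(\mu h)=O(\Delta t)$ by the maximum principle. The weight $|v_1|^p$ is harmless here because $|v_j|\le 2h$ near $j=0$. The term $e_{I,-1}$ is treated the same way and also contributes $O(\mu h)=O(\Delta t)$.

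\textbf{Main obstacle.} The delicate point is the $d_v$ sum of $d_tg^n_j$, i.e. second-order mixed differences of $g$ weighted by $v^p$. It must be $O(\Delta t)$ and not merely $O(1)$. I would first try to bound $|d_v d_tg|\le|d_tg^n_{j+1}|+|d_tg^n_j|$ and use hypothesis c) without the factor $h$, relying on the normalisation by $1/\Delta t$ in c) to gain $\Delta t$. If that loses a power of $h$, I would instead write $d_v d_t g = d_t d_v g$ and use hypothesis b) at the two times $t_n,t_{n+1}$. The remaining bookkeeping is routine: the supports in $(t_n,t_{n+2})$ overlap only finitely often, and the suprema over $n\le m$ are uniform because hypotheses b)--d) are uniform in $t$.
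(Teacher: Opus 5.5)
Your proposal is correct and follows essentially the same route as the paper: you reduce every sum to the boundary cells $i=0$, $j\ge 1$ and $(I,-1)$, rewrite $e^n_{0,j}$ as $d_tg^n_j$ plus $\mu\gamma E$ times differences of $g$, and conclude with the triangle inequality, Lemma \ref{m_consistency_data_estimates}, hypotheses b)--d) and the maximum principle. Your ``main obstacle'' is not one: the paper simply uses the crude bound $|d_v d_t g^n_j|\le |d_tg^n_{j+1}|+|d_tg^n_j|$ together with the estimate $h\sum_j|v_j^p d_tg^n_j|=O(\Delta t)$ that you already derived in Step 3, so no fallback is needed.
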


\begin{proof} Let $ p \in \mathbb{N}$ and fix an integer $m \leq N.$
We detail the estimate of the first term. For $0 \leq n \leq m$, we  use the definition of $e_{i,j}^{n}$ \eqref{m_corrective_term_inside}-\eqref{m_corrective_terms_bc_3} to obtain

\begin{eqnarray*}
\sum_{i=0}^{I} \sum_{j \in \mathbb{Z}} h \big | v_{j}^{p} d_{v} e^{n}_{i,j} \big| &=& \sum_{i=0}^{I} \sum_{j \in \mathbb{Z}} h | v_{j}^{p} (e^{n}_{i,j+1}-e^{n}_{i,j})| \\
& = & \sum_{j=0}^{+\infty} h \big| v_{j}^{p}(e_{0,j+1}^{n}-e_{0,j}^{n}) \big| + \sum_{j=-1}^{-\infty} h \big| v_{j}^{p}( e_{I,j+1}^{n}-e_{I,j}^{n}) \big|,
\end{eqnarray*}

\noindent since the corrective term vanishes in $\mathcal{N}(\mathcal{O}) \cup \mathcal{N}^{+}(\partial \mathcal{O})$. Let us estimate the first term. By definition, we have 

\begin{equation*}
   R_{1} := \sum_{j=1}^{+\infty} h \Big |v_{j}^{p} (e_{0,j+1}^{n} - e_{0,j}^{n}) \Big| + h |v_{0}^{p} e_{0,1}^{n} |.
\end{equation*}

For $j \geq 1$, we have

\begin{equation*}
    e_{0,j+1}^{n}-e_{0,j}^{n} = d_{t}g_{j+1}^{n}-d_t g_j^n -(-\mu \gamma (E_0^n)^+ + \mu \gamma (E_0^n)^-) d_{v} g_{j}^{n} - \mu \gamma (E_{0}^{n})^{+} d_{v} f_{0,j-1}^{n} + \mu\gamma (E_{0}^{n})^{-} d_{v} g_{j+1}^{n},
\end{equation*}

\noindent and note that

 $$
d_{v}f_{0,j-1}^{n} = \begin{cases}
d_{v} g^{n}_{j-1} & \textnormal{ if } j \geq 2\\
g_{1}^{n} -f_{0,0}^{n} & \textnormal{ if } j = 1.
\end{cases}
$$

The term $R_1$  therefore satisfies

\begin{eqnarray*}
    R_{1} &\leq & h \Big( \sum_{j=1}^{+\infty} \Big | v_{j}^{p} d_{t}g_{j}^{n} \Big | + \Big| v_{j}^{p} d_{t} g_{j+1}^{n} \Big | \Big) + h \mu |\gamma| \| E \|_{L^{\infty}(Q_{r})}  \Big( \sum_{j=1}^{+\infty} \Big |v_{j}^{p} d_{v}g_{j}^{n} \Big | + \Big |v_{j}^{p} d_{v}g_{j+1}^{n} \Big |\Big)\\
   &+& h \mu | \gamma| \| E \|_{L^{\infty}(Q_{r})} \sum_{j=2}^{+\infty} \Big | v_{j}^{p} d_{v} g_{j-1}^{n}\Big| +  h \mu | \gamma| \| E \|_{L^{\infty}(Q_{r})} \big( |g_{1}^{n}| + |f_{0,0}^{n}| \big)\\
   &+& h|v_0|^p ( g_1^{n+1} + g_1^n (1+\mu h + \mu \gamma \|E\|_{L^\infty}(Q_r))  +  \mu \gamma \|E\|_{L^\infty}(f_{0,0}^n+g_2^n) + \mu h g_1^n ).
\end{eqnarray*}

It remains to estimate the second term

\begin{equation*}
    R_{2} := \sum_{j=-1}^{-\infty} h \big| v_{j}^{p}( e_{I,j+1}^{n}-e_{I,j}^{n}) \big|.
\end{equation*}

The definition of $e_{i,j}^n$  implies that

\begin{equation*}
    R_{2} = h\big | v_{-1}^{p} e^{n}_{I,-1} \big| + h\big| v_{-2}^{p} e^{n}_{I,-1} \big| \leq (1+2^{p}) h^{p+1} \mu |\gamma| \| E \|_{L^{\infty}(Q_{r})} | f_{I,0}^{n} |.
\end{equation*}

Summing the estimates on $R_{1}$ and $R_{2}$ we eventually obtain

\begin{eqnarray*}
\sum_{i=0}^{I}\sum_{j\in\mathbb{Z}} h |v_j^p d_ve_{i,j}^n| & \leq & h \Big( \sum_{j=1}^{+\infty} \Big | v_{j}^{p} d_{t}g_{j}^{n} \Big | + \Big| v_{j}^{p} d_{t} g_{j+1}^{n} \Big | \Big) \\ 
   &+& h \mu |\gamma| \| E \|_{L^{\infty}(Q_{r})}  \Big( \sum_{j=1}^{+\infty} \Big |v_{j}^{p} d_{v}g_{j}^{n} \Big | + \Big |v_{j}^{p} d_{v}g_{j+1}^{n} \Big |\Big)\\
   &+& h \mu | \gamma| \| E \|_{L^{\infty}(Q_{r})} \sum_{j=2}^{+\infty} \Big | v_{j}^{p} d_{v} g_{j-1}^{n}\Big| \\
   &+& h|v_0|^p ( g_1^{n+1} + g_1^n (1+\mu h + \mu \gamma \|E\|_{L^\infty(Q_r)})) \\
   & +& h|v_0|^p (\mu \gamma \|E\|_{L^\infty}(f_{0,0}^n+g_2^n) + \mu h g_1^n )\\
   &+& h \mu | \gamma| \| E \|_{L^{\infty}(Q_{r})} \big( |g_{1}^{n}|+ |f_{0,0}^{n}| \big) +  (1+2^{p}) h^{p+1} \mu |\gamma| \| E \|_{L^{\infty}(Q_{r})} | f_{I,0}^{n} |.
\end{eqnarray*}

Since $h\mu = \Delta t$ (recall that $\mu=\frac{\Delta t}{h}$), using the discrete maximum principle \eqref{m_discrete_maximum_principle} and the estimates of Lemma \ref{lemme/estimationschema}, all the terms except the two first lines  are readily $O(\Delta t)$ terms. For the two first lines, we remark that it is  $O(\Delta t)$ term thanks to the estimate \eqref{m_estimate_vp_dvg} and the assumptions b), c) and d).
Proceeding similarly, we treat the second term. We have

\begin{equation*}
    \sum_{i=0}^{I-1} \sum_{j \in \mathbb{Z}} h | v_{j}^{p} d_{x} e^{n}_{i,j} | = \sum_{j \in \mathbb{Z}} h(| v_{j}^{p} e^{n}_{I,j} | +  | v_{j}^{p} e^{n}_{0,j} |) = h| v_{-1}^{p} e^{n}_{I,-1} | + \sum_{j=1}^{+\infty} h | v_{j}^{p} e^{n}_{0,j}|.
\end{equation*}

We rearrange the terms in the expression of $e_{0,j}^{n}.$ One has

\begin{equation*}
    e^{n}_{0,j} =  g_j^{n+1}-(1 - \mu \gamma (E_0^n)^+ + \mu \gamma (E_0^n)^-   ) g_{j}^n  - \mu \gamma (E_0^n)^+ f_{0,j-1}^n +  \mu \gamma (E_0^n)^- g_{j+1}^n,
\end{equation*}

and we recall that 

$$
f_{0,j-1}^{n} = \begin{cases}
    f^{n}_{0,0} & \textnormal{ if } j =1,\\
    g^{n}_{j-1} & \textnormal{ if } j \geq 2.
\end{cases}
$$

It yields for $j \geq 2$

$$
e_{0,j}^{n} = d_{t}g^{n}_{j} + \mu \gamma (E_{0}^{n})^{+} d_{v}g_{j-1}^{n} + \mu \gamma (E_{0}^{n})^{-} d_{v}g_{j}^{n},
$$

\noindent and  for $j =1 $

$$
e_{0,1}^{n} = d_{t}g^{n}_{1} + \mu \gamma (E_{0}^{n})^{+} (g_1^n - f_{0,0}^n) + \mu \gamma (E_{0}^{n})^{-} d_{v}g_{1}^{n}.
$$

Using the expression of $e_{I,-1}^{n}$ given by \eqref{m_corrective_terms_bc_3} we eventually obtain
 
\begin{eqnarray*}
    \sum_{i=0}^{I-1}\sum_{j\in\mathbb{Z}} h  |v_j^p d_xe_{i,j}^n| & \leq & \sum_{j=1}^{+\infty} h |v_j|^p (|d_tg_j^n| + \mu |\gamma| \| E\|_{L^{\infty}(Q_r)} |d_vg_{j}^n| ) \\
    &+&\sum_{j=2}^{+\infty} h |v_{j}|^{p} \mu |\gamma| \| E\|_{L^{\infty}(Q_r)} |d_vg_{j-1}^n|   +  h^{p+1}\mu  |\gamma| \| E\|_{L^{\infty}(Q_r)} |f_{0,0}^n |\\
    &+& h^{p+1}\mu  |\gamma| \| E\|_{L^{\infty}(Q_r)} |f_{I,0}^n | + h^{p+1} \mu |\gamma|  \| E\|_{L^{\infty}(Q_r)} | g_{1}^{n} |.
\end{eqnarray*}

This time, assumption c) allows us to conclude that this sum is in $O(\Delta t)$.

Proceeding similarly, we also obtain

\begin{eqnarray*}
    \sum_{i=0}^{I}\sum_{j\in\mathbb{Z}} h^{2} |v_j^p e_{i,j}^n|& \leq & h^2 |v_1^p e_{0,1}^n| + h^2 |v_{-1}^p e_{I,-1}^n| + \sum_{j=2}^{+\infty}h^2|v_j^p e_{0,j}^n|.
\end{eqnarray*}

This terms already have been shown to be in $O(\Delta t)$.
\end{proof}

The next lemma provides us with the BV estimates we need to establish compactness of our sequence of approximate solution $f_{h,\Delta t}$

\begin{lemme}[BV-estimates on the approximate solution] \label{m_BV_estimate_main_terms} Let $ p \in \mathbb{N}$ and consider the assumptions of Lemma \ref{m_BV_estimate_corrective_terms}. Assume moreover that 
$$
\quad|v|^{p+1} g\in BV((T_r,T_{r+1})\times \mathbb{R}^{+\star}).
$$
Provided the CFL  condition (\ref{equation/CFL}) holds, for each $k = 0,...,p$, the map 
$$
t \in [T_{r},T_{r+1}] \longmapsto TV\Big(|v_h|^kf_{h,\Delta t}(t),\mathcal{O}\Big)+ \||v_h|^k f_{h,\Delta t}(t)\|_{L^1(\mathcal{O})}
$$
is bounded uniformly in $0 < h \leq 1$ and $0 < \Delta t \leq \mu_{0}h$ on $[T_{r},T_{r+1}].$

\label{lemme/variationtotale}
\end{lemme}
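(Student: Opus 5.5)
Since $f_{h,\Delta t}(t)$ is, for $t\in[t_n,t_{n+1})$, a convex interpolation of the grid values at $t_n$ and $t_{n+1}$, it suffices to bound uniformly in $n\le N$ the discrete quantities
\[
M_k^n:=\sum_{i,j}h^2|v_j|^kf_{i,j}^n,\qquad X_k^n:=\sum_{i=0}^{I-1}\sum_j h|v_j|^k|d_xf_{i,j}^n|,\qquad V_k^n:=\sum_{i=0}^{I}\sum_j h|v_j|^k|d_vf_{i,j}^n|,
\]
for $k=0,\dots,p$. We would prove a discrete Gronwall inequality of the form $S^{n+1}\le (1+C\Delta t)S^n+C\Delta t$, where $S^n:=\sum_{k\le p}(M_k^n+X_k^n+V_k^n)$. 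The constants must depend only on $p$, $\gamma$, $\|E\|_{W^{1,\infty}(Q_r)}$ and the norms of the data in assumptions a)--d), and not on $A$ from e). Iterating the inequality then gives $S^n\le e^{C(T_{r+1}-T_r)}(S^0+C)$. The initial value $S^0$ is controlled by Lemma \ref{lemme/estimationschema}, which bounds $|v_j|^k\tilde f_{i,j}$, $|v_j^kd_x\tilde f_{i,j}|$ and $|v_j^kd_v\tilde f_{i,j}|$ in terms of moments and total variations of $|v|^l\tilde f$ on adjacent cells; summing over cells gives bounds by $\||v|^l\tilde f\|_{BV(\mathcal O)}$, $l\le k$. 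We use throughout the positivity from the maximum principle and the corrected form \eqref{m_upwind_corrected}, valid on all of $\Iintv{0,I}\times\mathbb Z$ with the fictive cells.

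\emph{Moments.} We multiply \eqref{m_upwind_corrected} by $h^2|v_j|^k$ and sum. Since the coefficients are nonnegative under the CFL condition, we reindex the shifted terms. The $x$-transport terms telescope into boundary fluxes. The outgoing fluxes are nonpositive and are dropped. The incoming flux is $h\Delta t\sum_{j\ge1}v_j^{k+1}g_j^n$, which is $O(\Delta t)$ by Lemma \ref{lemme/estimationschema} and assumption d) applied with $k+1$; this is where $|v|^{p+1}g\in BV$ enters. The $v$-transport terms do not telescope exactly, because the weight changes: shifting $j$ produces $\mu\gamma(E_i^n)^\pm h^2(|v_{j\pm1}|^k-|v_j|^k)f^n_{i,j}$. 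Since $||v_j\pm h|^k-|v_j|^k|\le Ch(1+|v_j|^{k-1})$ and $\mu h=\Delta t$, this is bounded by $C\Delta t\|E\|_\infty(M_{k-1}^n+M_0^n)$. The correction term contributes $\sum h^2|v_j|^k|e^n_{i,j}|=O(\Delta t)$ by Lemma \ref{lemme/termeerreur}.

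\emph{Total variation.} We take the differences $d_x$ and $d_v$ of \eqref{m_upwind_corrected}. For $d_xf_{i,j}^{n+1}$ with fixed $j$, the velocity coefficients are constant in $i$, so the $x$-transport part is again a convex combination of $d_xf^n_{i,j},d_xf^n_{i\pm1,j}$, up to boundary contributions handled by the extension and $e$. The $E$-part yields a convex combination of $d_xf^n_{i,j\pm1}$ plus commutator terms $\mu\gamma\big((E_{i+1}^n)^\pm-(E_i^n)^\pm\big)(f^n_{i+1,j}-f^n_{i+1,j\mp1})$. Since $|E_{i+1}^n-E_i^n|\le h\|\partial_xE\|_\infty$, these are bounded by $\Delta t\|\partial_xE\|_\infty|d_vf|$. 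For $d_vf_{i,j}^{n+1}$, the $x$-transport coefficients $(v_j)^\pm$ vary with $j$. The commutator $\mu(v_{j+1}^\pm-v_j^\pm)(f_{i\mp1,j+1}-f_{i,j+1})$ is then bounded by $\Delta t|d_xf|$, which couples $V_k$ to $X_k$; this is why both must be estimated together. We take absolute values, multiply by $h|v_j|^k$, and sum. Convexity, together with the weight-shift errors bounded as in the moment step, gives
\[
X_k^{n+1}+V_k^{n+1}\le(1+C\Delta t)\big(X_k^n+V_k^n\big)+C\Delta t\sum_{l<k}(X_l^n+V_l^n+M_l^n)+\sum_{i,j}h|v_j|^k\big(|d_xe_{i,j}^n|+|d_ve_{i,j}^n|\big).
\]
The last sum is $O(\Delta t)$ by Lemma \ref{lemme/termeerreur}.

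\emph{Main obstacle.} The hard part is the boundary rows $i=0$, $i=I$ and the corner cells $j=0,\pm1$. There the upwind stencil reaches the fictive values, the differences $d_xf$ across $x=0$ compare interior values with $g$, and $d_vf_{0,0}$ compares $f_{0,0}$ with $g_1$. We would rely on the extension $f_{-1,j}=f_{0,j}$ so that the convex-combination structure holds on the whole index set, and put every discrepancy into $e^n_{i,j}$, which Lemma \ref{lemme/termeerreur} controls. For the initial corner mismatch $|v_j|^k|\tilde f_{1,j}-g_j^0|$, we use the last estimate of Lemma \ref{lemme/estimationschema}. Finally, the CFL constant $\mu_0$ depends on $A$, but the Gronwall constants must not. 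We must therefore check that the CFL condition is used only for positivity of the coefficients and never to bound a factor $\mu|v_j|$ in a growth estimate.
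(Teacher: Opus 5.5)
Your plan follows the paper's proof: weighted discrete $BV$ estimates on the corrected scheme \eqref{m_upwind_corrected}, commutators coupling $X_k$ and $V_k$, $O(h)$ weight-shift errors producing lower-order quantities, Lemma \ref{lemme/termeerreur} for $e$, Lemma \ref{lemme/estimationschema} for $S^0$, and a discrete Gr\"onwall. Running one Gr\"onwall on $\sum_{k\le p}$ instead of inducting on $p$ is a harmless variant. So is bounding the $E$-terms of the moment equation by $M_{k-1}$ via summation by parts, where the paper bounds them by $V^p$.

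However, your inequality for $X_k^{n+1}+V_k^{n+1}$ omits a term, and your claim that every boundary discrepancy can be put into $e^n_{i,j}$ fails for it. Summing $\mu v_j^+\big(|d_vf^n_{i-1,j}|-|d_vf^n_{i,j}|\big)$ over $i=0,\dots,I$ leaves $\mu v_j^+|d_vf^n_{-1,j}|=\mu v_j|d_vg^n_j|$ for $j\ge1$. The cells $(0,j)$, $j\ge1$, are pinned to $g$ and lose no $v$-variation through $x$-transport, while row $i=1$ receives it. With the weight $h|v_j|^k$, this is the influx of velocity variation $\Delta t\sum_{j\ge1}|v_j|^{k+1}|d_vg^n_j|$ through $x=0$; the paper keeps it explicitly in $B^p_k$. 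It is not contained in $e^n_{0,j}$, which only carries $d_tg$ and the $E$-terms. Nor can it be absorbed into $(1+C\Delta t)V_k^n$: its ratio to the row-$0$ part of $\Delta t\,V_k^n$ is $|v_j|/h$. Using the CFL bound $\mu|v_j|\le 1$ instead only yields a factor $2$ per step, which is exactly the misuse you warn against at the end. (In the $X_k$ estimate the analogous term vanishes, since $d_xf^n_{-1,j}=0$ for $j\ge0$.)

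This term must be treated as data forcing. By \eqref{m_estimate_vp_dvg} at order $k+1$, it is bounded by $C\int_{t_n}^{t_{n+1}}\int_{\mathbb{R}^+}(1+|v|^k)g\,dv\,dt+2|\partial_v(|v|^{k+1}g)|\big((t_n,t_{n+1})\times\mathbb{R}^{+\star}\big)$. For $k=p$, this is where the extra hypothesis $|v|^{p+1}g\in BV$ is really used: you need the $v$-derivative, not only the $L^1$ moment your moment step needs.

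Moreover, assumptions b) and d) stop at order $p$. So this term is only summable in $n$, not uniformly $O(\Delta t)$. The same holds for your moment influx $\Delta t\sum_j h|v_j|^{p+1}g^n_j$, for which ``d) applied with $k+1$'' is unavailable when $k=p$. The recursion should therefore read $S^{n+1}\le(1+C\Delta t)S^n+b_n$, with $\sum_n b_n$ bounded by $\||v|^lg\|_{L^1}$ for $l\le p+1$ and by $|\partial_v(|v|^{p+1}g)|\big((T_r,T_{r+1})\times\mathbb{R}^{+\star}\big)$. Gr\"onwall then still gives $S^n\le e^{C(T_{r+1}-T_r)}\big(S^0+\sum_n b_n\big)$, and with this term added your argument closes.
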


\begin{proof}
    Let $p \in \mathbb{N}$. We  use here the abuse of notation for $j \in \mathbb{Z}$, $v_{j \pm 1} = v_{j} \pm h $ which is either $v_{j+1}$ of $v_{j-1}$ depending on whether we use the $+$ sign of the $-$ sign. We will use in the practical computation two elementary inequalities. The first one is obtained by convexity of the function $s \in \mathbb{R} \longmapsto |s|^{p}:$
    
    \begin{equation}
         \forall j \in \mathbb{Z}, \quad    |v_{j\pm 1}|^p \leq 2^{p} (|v_j|^p + h^p). 
            \label{equation/observation1}
    \end{equation}
    
    The second one can be deduced by integration of the function $s \in \mathbb{R} \mapsto p \textnormal{ sgn}(s) |s|^{p-1}$ combined with the previous inequality \eqref{equation/observation1}:
    
    \begin{equation}
         \forall j \in \mathbb{Z}, \quad    | |v_{j\pm 1}|^p - |v_j|^p | \leq p h (|v_j|^{p-1} + |v_{j\pm 1}|^{p-1}) \leq ph(2^{p-1}+1) |v_{j}|^{p-1} + 2^{p-1} p h^p .
              \label{equation/observation2}
    \end{equation}
    
To estimate the total variations of $f_{h,\Delta t}$, we will use the discrete differentiation rules of a product of two sequences of real numbers $(a_{i,j})_{(i,j) \in \Iintv{-1,I+1} \times\mathbb{Z}}$ and $(b_{i,j})_{(i,j) \in \Iintv{-1,I+1} \times\mathbb{Z}}:$

\begin{align*}
d_{v}(a_{i,j}b_{i,j}) = b_{i,j+1}d_{v}(a_{i,j}) + a_{i,j} d_v(b_{i,j}), \quad (i,j) \in \Iintv{0,I} \times \mathbb{Z},\\
d_{x}(a_{i,j}b_{i,j})  = b_{i+1,j} d_x(a_{i,j}) + a_{i,j} d_x(b_{i,j}), \quad (i,j) \in \Iintv{0,I} \times \mathbb{Z}.
\end{align*}

We now begin with an estimate of  $TV_{v}^p(f_{h,\Delta t}(t), \mathcal{O}).$
Applying the operator $d_v$ to the scheme \eqref{m_upwind_corrected}, we find for $(i,j) \in \Iintv{0,I} \times \mathbb{Z},$

    \begin{eqnarray*}
        d_v(f_{i,j}^{n+1}) & = & (1-\mu v_j^+ + \mu v_j^- - \mu \gamma (E_i^n)^+ + \mu \gamma (E_i^n)^- ) d_v(f_{i,j}^n) \\
         & + & \mu v_j^+  d_v(f_{i-1,j}^n) - \mu v_j^-  d_v(f_{i+1,j}^n) + \mu \gamma (E_i^n)^+  d_v(f_{i,j-1}^n) - \mu \gamma (E_i^n)^-  d_v(f_{i,j+1}^n) \\
         & - & \mu d_v(v_j^+) d_x(f_{i-1,j+1}^n)  - \mu d_v(v_j^-) d_x(f_{i,j+1}^n) + d_v(e_{i,j}^n).
    \end{eqnarray*}
    
We thus obtain after multiplication by $v_j^p$ that

 \begin{eqnarray*}
        v_j^pd_v(f_{i,j}^{n+1}) & = & (1-\mu v_j^+ + \mu v_j^- - \mu \gamma (E_i^n)^+ + \mu \gamma (E_i^n)^- ) v_j^pd_v(f_{i,j}^n) \\
         & + & \mu v_j^+ v_j^p d_v(f_{i-1,j}^n) - \mu v_j^- v_j^p  d_v(f_{i+1,j}^n) + \mu \gamma (E_i^n)^+ v_j^p d_v(f_{i,j-1}^n) \\
         &-& \mu \gamma (E_i^n)^- v_j^p d_v(f_{i,j+1}^n)   -  \mu d_v(v_j^+) v_j^p d_x(f_{i-1,j+1}^n)  - \mu d_v(v_j^-) v_j^p d_x(f_{i,j+1}^n) \\
         & +& v_j^p d_v(e_{i,j}^n).
    \end{eqnarray*}  
    
Under the CFL condition (\ref{equation/CFL}), we have $(1-\mu v_j^+ + \mu v_j^- - \mu \gamma (E_i^n)^+ + \mu \gamma (E_i^n)^- ) \geq 0$.
So we can take the module, use the triangle inequality, sum on the spatial index $0\leq i\leq I$ and rearrange the terms. We eventually obtain

\begin{eqnarray*}
    \sum_{i=0}^{I} |v_j^p  d_v(f_{i,j}^{n+1}) | & \leq &  \sum_{i=0}^{I} |v_j^p  d_v(f_{i,j}^{n}) | + \mu v_j^+ ( |v_j^p  d_v(f_{-1,j}^{n}) | - |v_j^p  d_v(f_{I,j}^{n}) |)   \\
    &+& \mu v_j^- ( | v_j^p d_v(f_{0,j}^{n}) | - |v_j^p  d_v(f_{I+1,j}^{n}) |) \\
     & + & \mu |\gamma| \sum_{i=0}^{I} |(E_i^n)^-| ( |v_j^p  d_v(f_{i,j}^{n}) | - |  v_j^pd_v(f_{i,j+1}^{n}) | ) \\
     &+&  \mu |\gamma| \sum_{i=0}^{I} |(E_i^n)^+| ( | v_j^p d_v(f_{i,j-1}^{n}) | - |v_j^p  d_v(f_{i,j}^{n}) | ) \\
    & + & \mu |d_v(v_j^+)| \sum_{i=0}^{I}|v_j^pd_x(f_{i-1,j+1}^n)|  + \mu |d_v(v_j^-)| \sum_{i=0}^{I}|v_j^pd_x(f_{i,j+1}^n)| + \sum_{i=0}^{I} |v_j^pd_v(e_{i,j}^n)|.
\end{eqnarray*}

Nonpositive boundary terms are less than $0$, so

\begin{eqnarray*}
    \sum_{i=0}^{I} |v_j^p  d_v(f_{i,j}^{n+1}) | & \leq &  \sum_{i=0}^{I} |v_j^p  d_v(f_{i,j}^{n}) | + \mu v_j^+  |v_j^p  d_v(f_{-1,j}^{n}) | - \mu v_j^-   |v_j^p  d_v(f_{I+1,j}^{n}) | \\
     & + & \mu |\gamma| \sum_{i=0}^{I}|(E_i^n)^-| ( |v_j^p  d_v(f_{i,j}^{n}) | - |  v_j^pd_v(f_{i,j+1}^{n}) | ) \\
     &+&  \mu |\gamma| \sum_{i=0}^{I} |(E_i^n)^+| ( | v_j^p d_v(f_{i,j-1}^{n}) | - |v_j^p  d_v(f_{i,j}^{n}) | ) \\
    & + & \mu |d_v(v_j^+)| \sum_{i=0}^{I}|v_j^pd_x(f_{i-1,j+1}^n)|  + \mu |d_v(v_j^-)| \sum_{i=0}^{I}|v_j^pd_x(f_{i,j+1}^n)| + \sum_{i=0}^{I} |v_j^pd_v(e_{i,j}^n)|.
\end{eqnarray*}

Then, we sum over $j\in\mathbb{Z}$ to get

\begin{eqnarray*}
    \sum_{j\in\mathbb{Z}}\sum_{i=0}^{I} |v_j^p  d_v(f_{i,j}^{n+1}) | & \leq &    \sum_{j\in\mathbb{Z}}\sum_{i=0}^{I} |v_j^p  d_v(f_{i,j}^{n}) | + \mu   \sum_{j=1}^{+\infty}  |v_j^{p+1}  d_v(f_{-1,j}^{n}) | + \mu   \sum_{j=-\infty}^{-1}   |v_j^{p+1}  d_v(f_{I+1,j}^{n}) | \\
     & + & \mu |\gamma| \sum_{i=0}^{I}  \sum_{j\in\mathbb{Z}} |(E_i^n)^-| ( |v_j^p | - |  v_{j-1}^p| )  | d_v(f_{i,j}^{n})| \\
     &+&  \mu |\gamma| \sum_{i=0}^{I}\sum_{j\in\mathbb{Z}}  |(E_i^n)^+| ( |v_{j+1}^p | - |  v_{j}^p| )  | d_v(f_{i,j}^{n})| \\
    & + & \mu h \sum_{j=1}^{+\infty}  \sum_{i=-1}^{I-1}|v_{j-1}^pd_x(f_{i,j}^n)|  +  \mu h \sum_{j=-1}^{-\infty}  \sum_{i=0}^{I}|v_{j-1}^pd_x(f_{i,j}^n)| + \sum_{j\in\mathbb{Z}} \sum_{i=0}^{I} |v_j^pd_v(e_{i,j}^n)|.
\end{eqnarray*}

Notice that $d_x(f_{-1,j}^n)=0$ if $j\geq0, n\geq 0$ and that $d_x(f_{I,j}^n)=0$ if $j\leq0, n\geq 0$. By using observations (\ref{equation/observation1})-(\ref{equation/observation2}), we get that

\begin{eqnarray*}
    \sum_{j\in\mathbb{Z}}\sum_{i=0}^{I} |v_j^p  d_v(f_{i,j}^{n+1}) | & \leq &    \sum_{j\in\mathbb{Z}}\sum_{i=0}^{I} |v_j^p  d_v(f_{i,j}^{n}) | + \mu   \sum_{j=1}^{+\infty}   |v_j^{p+1}  d_v(f_{-1,j}^{n}) | + \mu   \sum_{j=-\infty}^{-1}   |v_j^{p+1}  d_v(f_{I+1,j}^{n}) | \\
     & + & hp (2^{p-1}+1) \|E\|_{L^{\infty}} \mu |\gamma| \sum_{i=0}^{I}  \sum_{j\in\mathbb{Z}}   | v_j^{p-1}d_v(f_{i,j}^{n})|\\
     &+&  h^p p 2^{p-1} \|E\|_{L^{\infty}} \mu |\gamma| \sum_{i=0}^{I}  \sum_{j\in\mathbb{Z}}   |d_v(f_{i,j}^{n})|\\
    & + & 2^p \mu h \sum_{j\in\mathbb{Z}}  \sum_{i=0}^{I-1}|v_{j}^p d_x(f_{i,j}^n)|  + 2^p \mu h^{p+1} \sum_{j\in\mathbb{Z}}  \sum_{i=0}^{I-1}|d_x(f_{i,j}^n)|  \\
    &+& \sum_{j\in\mathbb{Z}} \sum_{i=0}^{I} |v_j^pd_v(e_{i,j}^n)|.
\end{eqnarray*}

So for any integer $0 \leq k \leq n+1$, we obtain

\begin{eqnarray*}
    \max_{0\leq n\leq k+1}\sum_{j\in\mathbb{Z}}\sum_{i=0}^{I} h|v_j^p  d_v(f_{i,j}^{n}) | & \leq &    \max_{0\leq n\leq k}\sum_{j\in\mathbb{Z}}\sum_{i=0}^{I}h |v_j^p  d_v(f_{i,j}^{n}) | + \mu   \max_{0\leq n\leq k}\sum_{j=1}^{+\infty} h  |v_j^{p+1}  d_v(f_{-1,j}^{n}) | \\
    &+& \mu   \max_{0\leq n\leq k}\sum_{j=-\infty}^{-1}h   |v_j^{p+1}  d_v(f_{I+1,j}^{n}) | \\
     & + & hp (2^{p-1}+1) \|E\|_{L^{\infty}} \mu |\gamma| \max_{0\leq n\leq k}\sum_{i=0}^{I}  \sum_{j\in\mathbb{Z}} h  | v_j^{p-1}d_v(f_{i,j}^{n})|\\
     &+&  h^p p 2^{p-1} \|E\|_{L^{\infty}} \mu |\gamma| \max_{0\leq n\leq k}\sum_{i=0}^{I}  \sum_{j\in\mathbb{Z}}h  |d_v(f_{i,j}^{n})|\\
    & + & 2^p \mu h \max_{0\leq n\leq k}\sum_{j\in\mathbb{Z}}  \sum_{i=0}^{I-1}h|v_{j}^p d_x(f_{i,j}^n)| \\
    & +&v 2^p \mu h^{p+1} \max_{0\leq n\leq k}\sum_{j\in\mathbb{Z}}  \sum_{i=0}^{I-1}h|d_x(f_{i,j}^n)|  \\
    &+&\max_{0\leq n\leq k} \sum_{j\in\mathbb{Z}} \sum_{i=0}^{I} h |v_j^pd_v(e_{i,j}^n)|.
\end{eqnarray*}

We then move on to $ TV_x^{p}(f_{h,\Delta t}, \mathcal{O})$. Applying  $d_x$ to the scheme \eqref{m_upwind_corrected}, then multiplying by $v_j^p$ and passing to the absolute value,  we get that

\begin{eqnarray*}
    |v_j^pd_x(f_{i,j}^{n+1})| & \leq & |v_j^pd_x(f_{i,j}^{n})|  + \mu v_{j}^+ (|v_j^pd_x(f_{i-1,j}^n)|-|v_j^pd_x(f_{i,j}^n)|) \\
    &+& \mu v_{j}^- (|v_j^pd_x(f_{i,j}^n)|-|v_j^pd_x(f_{i+1,j}^n)|) \\
     & + & \mu|\gamma| |(E_i^n)^+| (|v_j^pd_x(f_{i,j-1}^n)|-|v_j^pd_x(f_{i,j})|)\\
     & +& \mu|\gamma| |(E_i^n)^-| (|v_j^pd_x(f_{i,j}^n)|-|v_j^pd_x(f_{i,j+1}^n)|) \\
      & + & \mu|\gamma| |d_x((E_i^n)^+)| |v_j^pd_v(f_{i+1,j-1}^n)| + \mu|\gamma| |d_x((E_i^n)^-)| |v_j^pd_v(f_{i+1,j}^n)| + |v_j^pd_x(e_{i,j}^n)|.
\end{eqnarray*}

Summing over $0\leq i \leq I-1 $, and bounding nonpositive terms by $0$, we get

\begin{eqnarray*}
     \sum_{i=0}^{I-1}|v_j^pd_x(f_{i,j}^{n+1})| & \leq &  \sum_{i=0}^{I-1}|v_j^pd_x(f_{i,j}^{n})|  + \mu   |v_j^{p+1}d_x(f_{-1,j}^n)| + \mu  |v_j^{p+1}d_x(f_{I,j}^n)| \\
     & + & \mu|\gamma| \sum_{i=0}^{I-1} |(E_i^n)^+| (|v_j^pd_x(f_{i,j-1}^n)|-|v_j^pd_x(f_{i,j})|) \\
     & +&  \mu|\gamma| \sum_{i=0}^{I-1} |(E_i^n)^-| (|v_j^pd_x(f_{i,j}^n)|-|v_j^pd_x(f_{i,j+1}^n)|) \\
      & + & \mu|\gamma| \sum_{i=0}^{I-1}|d_x((E_i^n)^+)| |v_j^pd_v(f_{i+1,j-1}^n)| + \mu|\gamma| \sum_{i=0}^{I-1}|d_x((E_i^n)^-)| |v_j^pd_v(f_{i+1,j}^n)| \\
      &+& \sum_{i=0}^{I-1}|v_j^pd_x(e_{i,j}^n)|.
\end{eqnarray*}

Then we sum over $j\in\mathbb{Z}$ and we find 

\begin{eqnarray*}
     \sum_{j\in\mathbb{Z}}\sum_{i=0}^{I-1}|v_j^pd_x(f_{i,j}^{n+1})| & \leq &  \sum_{j\in\mathbb{Z}}\sum_{i=0}^{I-1}|v_j^pd_x(f_{i,j}^{n})|  + \mu  \sum_{j=1}^{+\infty} |v_j^{p+1}d_x(f_{-1,j}^n)| + \mu  \sum_{j=-\infty}^{-1} |v_j^{p+1}d_x(f_{I,j}^n)| \\
     & + & \mu|\gamma| \sum_{j\in\mathbb{Z}}\sum_{i=0}^{I-1} |(E_i^n)^+| (|v_{j+1}^p|-|v_j^p|)  |d_x(f_{i,j}^n)| \\
     & + & \mu|\gamma| \sum_{j\in\mathbb{Z}}\sum_{i=0}^{I-1} |(E_i^n)^-| (|v_{j}^p|-|v_{j-1}^p|)  |d_x(f_{i,j}^n)| \\
      & + & \mu|\gamma|  \sum_{j\in\mathbb{Z}}\sum_{i=0}^{I-1}|d_x((E_i^n)^+)| |v_{j+1}^pd_v(f_{i+1,j}^n)| \\
      &+& \mu|\gamma|\sum_{j\in\mathbb{Z}} \sum_{i=0}^{I-1}|d_x((E_i^n)^-)| |v_j^pd_v(f_{i+1,j}^n)| \\
      &+&  \sum_{j\in\mathbb{Z}}\sum_{i=0}^{I-1}|v_j^pd_x(e_{i,j}^n)|.
\end{eqnarray*}

As $E\in W^{1,\infty}(Q_r)$, we have $ |d_x((E_i^n)^+)|+|d_x((E_i^n)^-)| \leq \| \partial_x E \|_{L^\infty(Q_r)} h$. We also use the estimates (\ref{equation/observation1}, \ref{equation/observation2}) which yields

\begin{eqnarray*}
     \sum_{j\in\mathbb{Z}}\sum_{i=0}^{I-1}|v_j^pd_x(f_{i,j}^{n+1})| & \leq &  \sum_{j\in\mathbb{Z}}\sum_{i=0}^{I-1}|v_j^pd_x(f_{i,j}^{n})|  + \mu  \sum_{j=1}^{+\infty} |v_j^{p+1}d_x(f_{-1,j}^n)| + \mu  \sum_{j=-\infty}^{-1} |v_j^{p+1}d_x(f_{I,j}^n)| \\
     & + & \mu|\gamma|\|E\|_{L^\infty(Q_r)} p (1+2^{p-1}) h  \sum_{j\in\mathbb{Z}}\sum_{i=0}^{I-1}  |v_j^{p-1}d_x(f_{i,j}^n)| \\
     & + &  \mu|\gamma|\|E\|_{L^\infty(Q_r)} p 2^{p-1} h^p  \sum_{j\in\mathbb{Z}}\sum_{i=0}^{I-1}  |d_x(f_{i,j}^n)| \\
     & + & 2^p\mu|\gamma|\|\partial_x E\|_{L^\infty(Q_r)} h \sum_{j\in\mathbb{Z}}\sum_{i=0}^{I}|v_j^p d_v(f_{i,j}^n)| \\
     & + & 2^p\mu|\gamma|\|\partial_x E\|_{L^\infty(Q_r)} h^{p+1} \sum_{j\in\mathbb{Z}}\sum_{i=0}^{I}| d_v(f_{i,j}^n)| \\
      &+&  \sum_{j\in\mathbb{Z}}\sum_{i=0}^{I-1}|v_j^pd_x(e_{i,j}^n)|.
\end{eqnarray*}
So for $k \in \Iintv{0,N-1}$ we eventually obtain,
\begin{eqnarray*}
     \max_{0\leq n \leq k+1}\sum_{j\in\mathbb{Z}}\sum_{i=0}^{I-1}h|v_j^pd_x(f_{i,j}^{n})| & \leq &  \max_{0\leq n \leq k}\sum_{j\in\mathbb{Z}}\sum_{i=0}^{I-1}h|v_j^pd_x(f_{i,j}^{n})|  + \mu  \max_{0\leq n \leq k}\sum_{j=1}^{+\infty}h |v_j^{p+1}d_x(f_{-1,j}^n)| \\
     &+ &\mu  \max_{0\leq n \leq k}\sum_{j=-\infty}^{-1}h |v_j^{p+1}d_x(f_{I,j}^n)| \\
     & + & \mu|\gamma|\|E\|_{L^\infty(Q_r)} p (1+2^{p-1}) h  \max_{0\leq n \leq k}\sum_{j\in\mathbb{Z}}\sum_{i=0}^{I-1}h  |v_j^{p-1}d_x(f_{i,j}^n)| \\
     & + &  \mu|\gamma|\|E\|_{L^\infty(Q_r)} p 2^{p-1} h^p  \max_{0\leq n \leq k}\sum_{j\in\mathbb{Z}}\sum_{i=0}^{I-1}h  |d_x(f_{i,j}^n)| \\
     & + & 2^p\mu|\gamma|\|\partial_x E\|_{L^\infty(Q_r)} h \max_{0\leq n \leq k}\sum_{j\in\mathbb{Z}}\sum_{i=0}^{I}h|v_j^p d_v(f_{i,j}^n)| \\
     & + & 2^p\mu|\gamma|\|\partial_x E\|_{L^\infty(Q_r)} h^{p+1} \max_{0\leq n \leq k}\sum_{j\in\mathbb{Z}}\sum_{i=0}^{I}h| d_v(f_{i,j}^n)| \\
      &+&  \max_{0\leq n \leq k}\sum_{j\in\mathbb{Z}}\sum_{i=0}^{I-1}h|v_j^pd_x(e_{i,j}^n)|.
\end{eqnarray*}

Set $V_k^p =  \underset{ 0 \leq n \leq k } \max \displaystyle \sum_{j\in\mathbb{Z}}\sum_{i=0}^{I} h|v_j^p  d_v(f_{i,j}^{n}) |  +  \underset{ 0 \leq k \leq n }   \max \sum_{j\in\mathbb{Z}}\sum_{i=0}^{I-1} h|v_j^p  d_x(f_{i,j}^{n}) | $.  Summing the two estimates on $TV_{v}^{p}(f_{h,\Delta t}(t), \mathcal{O})$ and $TV_{x}^{p}(f_{h,\Delta t}(t),\mathcal{O})$, we glean the induction for $k \in \Iintv{0,N-1}:$

\begin{eqnarray*}
    V_{k+1}^p  & \leq & (1+A_p\Delta t ) V_k^p + pB_p \Delta t V_k^{p-1} + \Delta t(pC_{p} h^{p-1}+ D_p h^p)V_k^0 + B_k^p,
\end{eqnarray*}

\noindent where $A_p,B_p,C_{p},D_p$ are  positive constants which depend only on $p$, $|\gamma|$, $\|E\|_{L^{\infty}(Q_{r})}$, $\| \partial_{x} E \|_{L^{\infty}(Q_{r})}$. The term $B_k^p \geq 0$ gathers all the sums corrective terms $e_{i,j}^n$, along with the two boundary terms 

$$
\mu   \max_{0\leq n\leq k}\sum_{j=1}^{+\infty} h  |v_j^{p+1}  d_v(f_{-1,j}^{n}) | + \mu   \max_{0\leq n\leq k}\sum_{j=-\infty}^{-1}h   |v_j^{p+1}  d_v(f_{I+1,j}^{n}) | .
$$

 These terms are controlled by the total variations of $|v|^kg$, for $k=0,..p+1$. A discrete Grönwall inequality then yields the explicit estimate: $\forall k\in  \Iintv{0,N}$,
 
$$
V_k^p \leq (1+A_p\Delta t)^kV_0^p + \left(\dfrac{(1+A_p\Delta t)^k-1}{A_p}\right)\dfrac{ pB_p \Delta t V_k^{p-1} + \Delta t(pC_{p} h^{p-1}+ D_p h^p)V_k^0 + B_k^p}{\Delta t}.
$$

 In addition, Lemma \ref{m_BV_estimate_corrective_terms} proves that $\underset{ k \in \Iintv{0,N}} \max B_k^p= O(\Delta t)$, hence the remainder is bounded for all $0 < \mu \leq \mu_{0}.$ Observe that $V_0^p$ is controlled by the $L^1$ norms \\ $\| v^k (T\tilde{f}(0,.)-Tg(T_{r},.))\|_{L^1(\mathbb{R}^+)}, \| v^k\tilde{f}\|_{L^1(\mathcal{O})}, \|v^k g\|_{L^1((T_{r},T_{r+1})\times\mathbb{R})}, \| v^k T\tilde{f}(0,.)\|_{L^1(\mathbb{R}^+)}$, \\ $\| v^k T\tilde{f}(L,.)\|_{L^1(\mathbb{R}^-)} $ and the total variations $TV\Big( |v|^k g,(T_{r},T_{r+1})\times\mathbb{R}^{+\star}\Big), TV\Big( |v|^k\tilde{f},\mathcal{O}\Big) $ for $k=0,...,p$. An induction on $p$ shows that $V_k^0,V_k^1,...,V_k^p$ are bounded uniformly with respect to $k$. In the same fashion, we derive the estimate for the discrete $L^{1}$ norms $I_{p}^{k} := \underset{ 0 \leq n \leq k} \max \displaystyle \sum_{j \in \mathbb{Z}} \sum_{i = 0}^{I} h^2 |v_{j}|^{p} |f^{n}_{i,j}|$:

$$
I_{k+1}^p \leq I_k^p + \Delta t \sum_{j=1}^\infty h|v_j|^{p+1}g_j^n + \Delta t (2^p+1) V_k^p + 2^p h^p \Delta t V_k^0 + \sum_{j\in\mathbb{Z}}\sum_{i = 0}^{I} h^2 |v_j|^p |e_{i,j}^n|. 
$$

The same reasoning applies. It shows the claim since the total variation and the $L^{1}$ norms on $\mathcal{O}$ for $t \in [t_{n},t_{n+1})$ are obtained by convex combination of the norms at step $n$ and $n+1.$
\end{proof}

\subsection{Passing to the limit in the scheme}

We are ready to apply a compactness argument to identify  a weak solution for the Vlasov problem \eqref{systeme/vlasovlineairebord} and prove Proposition \ref{proposition:regularite_vlasov}.

\begin{proof}[Proof of Proposition \ref{proposition:regularite_vlasov}] We first show the result when $\tilde{f},g$ have bounded supports, and then extend the result to general data. \\

\noindent \textbf{Step 1: compactly supported data.} \\

Let $T_{0} = 0$ and $0 < T_{1} < 1$. Fix $0 < \mu \leq \mu_{0}$, with $\mu_{0}$ is given in the CFL condition \eqref{equation/CFL}. Recall that $\Delta t = \mu h_{I}$ with $h_{I} = \frac{1}{I+1}$ and set for ease $(f_{I} = f_{h_{I},\mu h_{I}})_{I \in \mathbb{N}}.$ First, observe that the support in velocity of $f_I$ on $[T_{0},T_{1}]$ is bounded by some constant $R> 0$. We then work in the domain $\mathcal{O}_R :=[0,L]\times [-R,R]\subset \mathcal{O}$. By the convergence result of Boyer \cite{boyerconv} (see also  \cite{aguillon}), we know that $(f_I)_{I\in\mathbb{N^\star}} $ converges in the space $C^0([0,T_{1}]; L^1(\mathcal{O}_R))$ toward a function $f$ which is the unique weak solution  (Theorem \ref{m_def_weaksol_vlasov}). By Fatou's lemma,  lower semi-continuity of the total variation, and Lemma \ref{m_BV_estimate_main_terms}) we get for $k \in \Iintv{0,..p}$ and $t \in [0,T_{1}],$

\[
\||v|^kf(t)\|_{BV(\mathcal{O}_R)} \leq \underset{ l \rightarrow +\infty} \liminf \||v_{h_{I_{l}}}|^kf_{I_{l}}(t)\|_{BV(\mathcal{O}_R)} < +\infty.
\]

Recall that the discrete estimates do not depend on the size of the supports of $\tilde{f},g$, but rather on the $BV$, $L^1$ and $L^\infty$ norms of their moments in velocity. Hence, every bound is uniform in the size of the supports.  Eventually, using the equation  
 
 $$\partial_tf = -v\partial_xf-\gamma E \partial_vf,$$
 
 \noindent we obtain $\partial_{t} f \in \mathcal{M}(\Omega_{0})$  since $v\partial_{x}f \in \mathcal{M}(\Omega_{0})$ and $E \partial_{v} f \in \mathcal{M}(\Omega_{0}).$  So $f \in BV(\Omega_{0}).$ Multiplying the equation by $v^{k}$ for $0 \leq k \leq p-1$ and using the fact that the differentiation in time and the multiplication by $v$
commutes,  we obtain that $|v|^{k} f \in BV(\Omega_{0})$ for $0 \leq k \leq p-1.$
Repeating the process on  a finite number of adjacent time intervals of size less than one gives us the result on $\Omega_T$. We finish by proving that the  trace $\overline{T}f$ of the weak solution (see Theorem \ref{m_weak_form})  coincide with the usual trace (see Theorem \ref{divergence_formula} and  \ref{theoreme/trace}). To do this, we first consider a sequence $(E_n)_{n\in\mathbb{N}}$ such that $E_n\in C_0^\infty(\overline{\Omega_T})$ and $E_n\xrightarrow[n\to \infty]{L^\infty(Q_T)} E $. By performing an integration by parts  with $f$ and the vector field $\phi(1,v,\gamma E_n)$, where $\phi\in C^\infty_0(\overline{\Omega_T})$, and taking the limit $n\to\infty$, we get that 

\begin{equation}
\int_{\Omega_T} f (\partial_t\phi + v\partial_x\phi +\gamma E\partial_v \phi) dtdxdv = \int_{\partial\Omega_T} Tf \phi (1,v,\gamma E)\cdot \overrightarrow{n} d\mathcal{H}^2 - \int_{\Omega_T} \phi (1,v,\gamma E)\cdot d(Df).
\end{equation}

Now, by choosing $\phi\in C_0^\infty(\Omega_T)$ and by comparing this last equality with the weak formulation of the Vlasov equation, we deduce that $(1,v,\gamma E)\cdot Df = 0$. we therefore deduce that for  every $\phi\in C^\infty_0(\overline{\Omega_T})$, 

\begin{equation}
\int_{\Omega_T} f (\partial_t\phi + v\partial_x\phi +\gamma E\partial_v \phi) dtdxdv = \int_{\partial\Omega_T} Tf \phi (1,v,\gamma E)\cdot \overrightarrow{n} d\mathcal{H}^2. 
\end{equation}

Again, comparing this last equality with the weak formulation gives that $Tf = \overline{T}f$ almost everywhere on $\partial\Omega_T$. \\

\noindent \textbf{Step 2: extension to  data with unbounded supports.} \\

We  now get rid of the compactness assumption on the support of $g$ and $\tilde{f}$.  Choose  $R>0$, and consider the solutions of the following Vlasov problem:

\begin{equation}
\left\lbrace
\begin{array}{ll}
\partial_t f_R + v \partial_x f_R +\gamma E \partial_v f_R = 0, &  (t,x,v)\in  \Omega_T, \\ 
f_R(0,x,v) =  \chi_R(v)\tilde{f}(x,v), & (x,v)\in \mathcal{O},  \\
f_R(t,0,v) =  \chi_R(v)g(t,v),   &  (t,v)\in (0,T)\times\mathbb{R}^{+\star}, \\
f_R(t,L,v) = 0,   &  (t,v)\in (0,T)\times\mathbb{R}^{-\star}. 
\end{array}\right.
\label{systeme/vlasovbis}
\end{equation}
 The solution $f_R$ satisfies the same bounds as before which are uniform in $R$. Indeed for $k \in \Iintv{0,p},$ we have
$$
\|v^k\chi_R g\|_{L^1([0,T]\times\mathbb{R})} \leq \|v^k g \|_{L^1([0,T]\times\mathbb{R})},
$$
$$
\|v^k \chi_R g \|_{L^\infty([0,T]\times\mathbb{R})} \leq \|v^k g\|_{L^\infty([0,T]\times\mathbb{R})},
$$
$$
|D(\chi_R v^k g)|((0,T)\times\mathbb{R}^{+\star}) \leq 2T\|\alpha'\|_{L^1([0,1])} \| v^k g \|_{L^\infty(\Omega_T)} + |D(v^kg)|((0,T)\times\mathbb{R}^{+\star}),
$$

\noindent and so on. For each $t \in [0,T]$, the set

\begin{equation}
   \mathcal{S}_t= \Big \lbrace f_{R}(t)  \: : R>0 \Big \rbrace 
\end{equation}

\noindent is bounded in $BV(\mathcal{O})$, thus for every $t$ there is a subsequence $(f_{R_n}(t))_{n\in\mathbb{N}}$  which converges in $L^1_{loc}(\mathcal{O})$ toward a function $f(t)\in BV_{loc}(\mathcal{O})$ (Theorem \ref{theoreme/compacite}). Once again, By Fatou's lemma, lower semi-continuity of the total variation and  Lemma \ref{m_BV_estimate_main_terms}, we obtain that   for $k \in \Iintv{0,..p}$ and $t \in [0,T]$

\[
\||v|^kf(t)\|_{BV(\mathcal{O})} \leq \underset{ n \rightarrow +\infty} \liminf \||v|^kf_{R_n}(t)\|_{BV(\mathcal{O})} < +\infty,
\]  

\noindent hence $|v|^kf(t)\in BV(\mathcal{O})$. The regularity in $BV(\Omega_T)$ is deduced as before. We now show that $f$ satisfies the weak formulation. Consider $V$ a bounded  neighborhood of   $Supp(\phi)\cap \partial\Omega_T$ in $\mathbb{R}^3$ and consider $A = \overline{V\cap \Omega_T}$. The set $A$ is compact in $\mathbb{R}^3$ and its inverse image by the flow of $(1,v,\gamma E)$ is a compact of the incoming part of $\partial \Omega_T$. Hence, for all $n$ large enough, for every $(t,x,v)\in A$, and for all $r>0$ small enough,

$$
\fint_{B((t,x,v),r)} f_{R_n}(t',x',v')) dt'dx'dv' = \fint_{B((t,x,v),r)} f(t',x',v')) dt'dx'dv' .
$$

We deduce that $\lim\limits_{n\to\infty} Tf_{R_n} = Tf$ almost everywhere in $A$. Using this and dominated convergence,  we get the weak formulation on $f$ for every $\phi\in C_0^\infty(\overline{\Omega_T})$:

\begin{equation}
\int_{\Omega_T} f (\partial_t\phi + v\partial_x\phi +\gamma E\partial_v \phi) dtdxdv = \int_{\partial\Omega_T} Tf \phi (1,v,\gamma E)\cdot \overrightarrow{n} d\mathcal{H}^2 .
\end{equation}

It only remains to establish that $f\in C^0([0,T];L^1(\mathcal{O}))$. We  fix $B>0$,  two instants $t_1<t_2$ and define $U_{t_1,t_2}=(t_1,t_2)\times\mathcal{O}$. By a smoothing argument, we build a sequence of functions $(\lambda_n)_{n\in\mathbb{N}}$ such that for all $n$,  $\lambda_n\in C^\infty_c(\mathbb{R})$, $|\lambda_n|\leq 1$, $Supp(\lambda_n)\in [-B-1,B+1]$ and that $\lambda_n\to \mathbb{1}_{[-B,B]}$ almost everywhere in $\mathbb{R}$. We choose $\psi\in C^\infty_0(\overline{\mathcal{O}})$. Notice that $\psi$ is smooth up to the boundary. Applying Green's identity to the vector field $(\psi \lambda_n,0,0)$ and the  function $f\in BV(U_{t_1,t_2})$, we get that

$$
\int_{\mathcal{O}}(f(t_2)-f(t_1))\psi \lambda_n dxdv = \int_{U_{t_1,t_2}} \psi \lambda_n d(\partial_t f).
$$

Letting $n\to\infty$ and using the dominated convergence theorem, we obtain

$$
\forall \psi\in C^\infty([0,L]\times[-B,B]), \ \int_{\mathcal{O}}(f(t_2)-f(t_1))\psi  dxdv = \int_{U_{t_1,t_2}} \psi  d(\partial_t f).
$$

Now, we choose $\varphi\in L^\infty(\mathcal{O})$, with support in $(0,L)\times(-B,B)$. Using mollifier and imitating (\cite{evansgariepy}, Theorem 3, section 4.2) we built a sequence $(\varphi_k)_{k\in\mathbb{N}}$ such that, for all $k\in\mathbb{N}$,  $\varphi_k \in C^\infty([0,L]\times[-B,B])$, $|\varphi_k|\leq \|\varphi\|_{L^\infty([0,L]\times[-B,B])}$. Moreover, $\varphi_k \to \varphi$ almost everywhere in $[0,L]\times[-B,B]$.  Using the last equality for this sequence and using dominated convergence again, we find that 

$$
\forall \varphi\in L^\infty(\mathcal{O}) \mbox{ such that } Supp(\varphi)\subset (0,L)\times(-B,B), \ \int_{\mathcal{O}}(f(t_2)-f(t_1))\varphi  dxdv = \int_{U_{t_1,t_2}} \varphi  d(\partial_t f).
$$

Finally, if the support of  $\varphi$ is not bounded support, we multiply it by the cutoff function $\chi_R$, apply the last result to this product, let $R\to\infty$ and use dominated convergence, to get that 

$$
\forall \varphi\in L^\infty(\mathcal{O}), \  \int_{\mathcal{O}}(f(t_2)-f(t_1))\varphi  dxdv = \int_{U_{t_1,t_2}} \varphi  d(\partial_t f).
$$

By duality, we get the inequality $\|f(t_2)-f(t_1)\|_{L^1(\mathcal{O})} \leq |\partial_tf|(U_{t_1,t_2})$. This proves the time continuity in $L^1(\mathcal{O})$.

\end{proof}

\section{The Vlasov-Poisson problem with prescribed   Dirichlet boundary condition} \label{sec:Vlasov-Poisson}

In this section, we recall the proof of wellposedness of the Vlasov-Poisson  system when Dirichlet boundary conditions are prescribed (\ref{melon}, \ref{meleche}).  For a given   function $u\in W^{1,\infty}([0,T])$, we consider the two species Vlasov-Poisson problem  on $[0,T]$:

\begin{equation} 
\left\lbrace
\begin{array}{ll}
\partial_t f^{\pm} + v \partial_x f^{\pm}  -\gamma_\pm \partial_x\phi \partial_v f^{\pm}  = 0 &  \textnormal{ in } \Omega_{T}, \\ 
f^\pm(t,0,v) = g(t,v) &\text{ if } (t,v) \in (0,T) \times \mathbb{R}^{+\star}, \\
f^\pm(t,L,v) = 0 &\text{ if } (t,v) \in (0,T) \times \mathbb{R}^{-\star}, \\ 
f^\pm(0,x,v) = \tilde{f}(x,v) & \text{ if }  (x,v)\in \mathcal{O},  \\
\end{array}\right.
\label{melon}
\end{equation}

\begin{equation}
\left\lbrace
\begin{array}{lll}
-\varepsilon_0 \partial_{xx}\phi & = & q_+ \rho^+ + q_- \rho^- \textnormal{ in } [0,T)\times (0,L), \\
\phi(t,0) &=& 0, \ \forall t \in [0,T), \\ 
\phi(t,L)& =& u(t),\ \forall t\in [0,T).
\end{array}
\right.
\label{meleche}
\end{equation}

Existence and uniqueness in BV for this problem has already  been studied \cite{Guo}. Nevertheless, we point out that the dependence of the estimates with respect to $u$ is fundamental for the Vlasov-Poisson-Ampère system, so we reproduce them for completeness. The proof proceeds by  a fixed point theorem applied to the electric field. We consider the map 

\begin{equation*}
    \Lambda : W^{1,\infty}(Q_{T}) \longmapsto L^{\infty}(Q_{T})
\end{equation*}

\noindent defined for any $E\in W^{1,\infty}(Q_T)$  and $(t,x) \in Q_{T}$ by

\begin{eqnarray}
\Lambda(E)(t,x) &:=& -\dfrac{1}{L\epsilon_{0}}\int_0^L\int_0^s (q_+\rho^++q_-\rho^-)(t,u) duds + \dfrac{1}{\epsilon_{0}}\int_0^x(q_+\rho^++q_-\rho^-)(t,u) du\nonumber \\
& - &\frac{u(t)}{L},
\label{systeme/ptfixe1}
\end{eqnarray}

\noindent where $\rho^+ ,\rho^-$ are computed from the solutions of  Vlasov equation \eqref{systeme/vlasovlineairebord} ($T_r=0,T_{r+1}=T$) when the electric field $E$ is given. According to Proposition \ref{proposition:regularite_vlasov} for each given $E \in W^{1,\infty}(Q_{T})$ we have unique weak solution such that $|v|^{k} f^{+},|v|^{k} f^{-} \in BV(\Omega_{T})$ for $k = 0,...p-1$. Thus, it is easy to see that $\Lambda(E)$ defines a function in $L^{\infty}(Q_{T})$ since the total charge and $u$ are bounded. In the sequel, we shall apply the Schauder fixed point theorem to $\Lambda$ for the existence.  At the very first, we need to prove $\Lambda\Big(W^{1,\infty}(Q_{T}) \Big) \subset W^{1,\infty}(Q_{T})$. From the formula \eqref{systeme/ptfixe1}, it is easy to see that $\Lambda(E), \partial_{x} \Lambda(E) \in L^{\infty}(Q_{T}).$ To prove that $\partial_{t} \Lambda(E) \in L^{\infty}(Q_{T})$ we first need to compute this derivative. The next lemma provides an expression similar to the Maxwell-Ampère equation.
\begin{lemme}
Provided $|v|^{k} f^{\pm}$  belongs to $BV(\Omega_{T})$ for $k = 0,1$, we have the following expression for the distributional derivative $\partial_{t} \Lambda(E)$:
\begin{equation}
\epsilon_{0} \partial_t\Lambda(E)(t,x) = -(q_+j^++q_-j^-)(t,x) + \frac{1}{L} \int_0^L(q_+j^++q_-j^-)(t,u)du - \frac{\epsilon_{0} }{L}u'(t).
\end{equation}
\label{lemme/maxwellampere}
\end{lemme}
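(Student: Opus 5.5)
The plan is to differentiate the explicit formula \eqref{systeme/ptfixe1} in time in the sense of distributions on $Q_T$. The continuity equation of Proposition \ref{proposition/regulariterhoj} then turns time derivatives of the charge density into spatial derivatives of the current. Set $\sigma := q_+\rho^+ + q_-\rho^-$ and $J := q_+ j^+ + q_- j^-$. Since $f^\pm, |v| f^\pm \in BV(\Omega_T)$, Proposition \ref{proposition/regulariterhoj} gives $\rho^\pm, j^\pm \in BV(Q_T)$ and $\partial_t \rho^\pm = -\partial_x j^\pm$ in $\mathcal{M}(Q_T)$. Hence $\partial_t \sigma = -\partial_x J$.

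The key intermediate identity is a boundary-aware version of this, namely for almost every $x\in(0,L)$
\begin{equation*}
\partial_t \int_0^x \sigma(t,y)\,dy = J(t,0) - J(t,x) \quad \text{in } \mathcal{D}'(0,T).
\end{equation*}
To prove it, I test against $\psi(t)\theta_n(y)$, where $\theta_n$ is a Lipschitz approximation of $\mathbb{1}_{[0,x]}$ that is nonzero at $y=0$. The cleanest route is to go back to the kinetic level: take $\varphi(t,y,v) = \psi(t)\theta_n(y)\chi_R(v)$ with $\psi\in C_c^\infty(0,T)$ in the weak formulation \eqref{m_weak_form}. The boundary term at $y=0$ produces $\int\!\int v\,\overline{T}f(t,0,v)\psi\,dv\,dt$, which by the trace identification in the proof of Proposition \ref{proposition:regularite_vlasov} equals $\int \psi\, Tj(t,0)\,dt$. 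Here $Tj(t,0)$ is the $BV$ trace of $j$ at $x=0$, and Theorem \ref{theoreme/regularite_trace} with $k=1$ ensures that $|v|Tf$ is integrable. Letting $R\to\infty$ by dominated convergence uses $|v|f\in L^1$, and the term $\gamma E\,\partial_v\chi_R$ vanishes in the limit. Letting then $n\to\infty$ turns $\int\theta_n' j$ into the point value $j(t,x)$ for a.e. $x$ (Lebesgue points of $j$ in $x$ via Theorem \ref{theoreme/desintegration}). Multiplying by the charges and summing over species gives the identity.

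Next I integrate in $x$ over $(0,L)$, using Fubini to justify the integrated version:
\begin{equation*}
\partial_t \int_0^L\!\int_0^s \sigma\,du\,ds = L\,J(t,0) - \int_0^L J(t,u)\,du .
\end{equation*}
Finally I plug both identities into \eqref{systeme/ptfixe1}, together with $\partial_t(u/L) = u'/L$, which holds since $u\in W^{1,\infty}$. This gives
\begin{equation*}
\epsilon_0 \partial_t \Lambda(E) = -\bigl(L J(t,0) - \textstyle\int_0^L J\bigr)/L + J(t,0) - J(t,x) - \epsilon_0 u'/L ,
\end{equation*}
and the $J(t,0)$ terms cancel, which is exactly the claimed formula.

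The main obstacle is the rigorous justification of the boundary value $J(t,0)$, i.e.\ identifying the limit of $\int_0^L \theta_n'(y)\,j\,dy$ near $y=0$ with a genuine trace. I would handle it using Theorem \ref{theoreme/regularite_trace}, which applies because $|v|f\in BV(\Omega_T)$, so that the weak-formulation trace and the BV trace coincide. Since this term cancels in the end, an alternative I would try first is to avoid it entirely. One could test only against $\theta$ supported away from $0$ and obtain $\partial_t\bigl(\int_a^x \sigma\bigr) = J(t,a) - J(t,x)$. Then average over $a$ near $0$: the $a$-dependent term is independent of $x$, so it cancels identically between the two pieces of $\Lambda(E)$, and the limit $a\to 0$ then involves only $L^1$ convergence.
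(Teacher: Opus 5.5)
Your proposal is correct and has the same structure as the paper's proof. Both derive $\partial_t\int_0^x \sigma = J(t,0)-J(t,x)$ from the continuity equation, integrate in $x$, and let the $J(t,0)$ contributions cancel; the only difference is that the paper gets this identity at the macroscopic level, by integrating the measure identity $\partial_t\rho^\pm=-\partial_s j^\pm$ over $(0,T)\times(0,x)$ against $\varphi(t,x)$, whereas you re-derive it from the kinetic weak formulation with test functions $\psi(t)\theta_n(y)\chi_R(v)$. Your closing remark is also a genuine simplification: $\int_0^a\sigma\,du$ is independent of $x$ and drops out of $\Lambda(E)$ identically, so $\Lambda(E)$ can be written with any base point $a$ and no boundary trace of $j$ is ever needed.
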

\begin{proof}

Since $|v|^{k} f^{\pm}$  belongs to $BV(\Omega_{T})$ for $k = 0,1$, the moments $\rho^\pm,j\pm$ belong to $BV(Q_T)$. Let us now compute the distributional derivative $\partial_t \int_0^x(q_+\rho^+ +q_- \rho^-)(t,s) ds$. Let $\varphi\in C_0^\infty(Q_T)$.  Performing an integration by parts, using the continuity equation, and doing again an integration by parts, we justify the following computation:

\begin{eqnarray*}
\int_{Q_T} \partial_t \int_0^x \rho^\pm(t,s) ds \varphi(t,x)dxdt & = & -\int_{Q_T} \int_0^x \rho^\pm(t,s) ds \partial_t\varphi(t,x)dxdt  \\
 & = & -\int_0^L \left( \int_0^x\int_0^T \rho^\pm(t,s)\partial_t\varphi(t,x) dsdt \right)dx \\
 & = &  \int_0^L\left( \int_0^x\int_0^T \varphi(t,x) d\partial_t\rho^\pm(t,s)\right) dx  \\
 & = & -\int_0^L \left(\int_0^x\int_0^T \varphi(t,x) d\partial_s j^\pm(t,s) \right) dx  \\
  & = & -\int_{Q_T} \varphi(t,x) (j^\pm(t,x)-j^\pm(t,0)) dxdt 
\end{eqnarray*}

Proceeding similarly for the other terms, we get the result.

\end{proof}

We now prove existence and uniqueness for the Vlasov-Poisson system.
\begin{theoreme}[Existence and uniqueness for the Vlasov-Poisson system] Assume, for an integer $p\geq 2$, that:
\begin{itemize}
        \item $|v|^k\tilde{f}^\pm \in  BV(\mathcal{O}), k=0,...p$,
        \item $|v|^k g^\pm\in BV((0,T)\times\mathbb{R}^+), k=0,...p+1$,
        \item $|v|^k \tilde{f}^\pm\in L^\infty(\mathcal{O}),|v|^k g^\pm\in L^\infty((0,T)\times\mathbb{R}^{+\star})$, $k=0,...p$,
          \item $\underset{ t \in (0,T)}{\sup} \underset{ \Delta t > 0}  \sup \dfrac{1}{\Delta t}|\partial_v(|v|^kg^{\pm})|((t,t+\Delta t)\times\mathbb{R}^{+\star})< + \infty, k=0,...p$, 
        \item $\underset{ t \in (0,T)}{\sup} \underset{ \Delta t > 0}  \sup \dfrac{1}{\Delta t}|\partial_t(|v|^kg^{\pm})|((t,t+\Delta t)\times\mathbb{R}^{+\star})< + \infty, k=0,...p$, 
               \item $\underset{ t \in (0,T)} \sup  \int_{\mathbb{R}^{+}} |v|^k g^\pm(t,v)dv <\infty$, $k=0,...,p$.
\end{itemize}

Then the Vlasov-Poisson system (\ref{melon}, \ref{meleche}) has a unique weak solution $(f^{+},f^{-},\phi) \in C^0\Big([0,T];L^{1}(\mathcal{O}) \Big) ^2  \times W^{2,\infty}(Q_{T}) $ such that $|v|^{k} f \in BV(\Omega_{T})$ for all $k \in \Iintv{0,p-1}$, and that for all  $k \in \Iintv{0,p}$ and almost all $t\in(0,T),$ $|v|^k f(t)\in BV(\mathcal{O})$.

\end{theoreme}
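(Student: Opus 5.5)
We prove existence by a Schauder fixed point for the map $\Lambda$ of \eqref{systeme/ptfixe1} on a closed convex set of fields, and uniqueness by a Gronwall estimate. Fix $p\geq 2$ and $u\in W^{1,\infty}([0,T])$. For $E\in W^{1,\infty}(Q_T)$, Proposition \ref{proposition:regularite_vlasov}, applied to each species with $\gamma=\gamma_\pm$, gives weak solutions $f^\pm$ with $|v|^kf^\pm\in BV(\Omega_T)$ for $k\le p-1$, so in particular for $k=0,1$. Hence Proposition \ref{proposition/regulariterhoj} and Lemma \ref{lemme/maxwellampere} apply. They give
\[
\|\Lambda(E)\|_{L^\infty}+\|\partial_x\Lambda(E)\|_{L^\infty}\lesssim \sup_t\|\rho^\pm(t)\|_{L^\infty_x}+|u|_\infty,
\]
\[
\|\partial_t\Lambda(E)\|_{L^\infty}\lesssim \sup_t\|j^\pm(t)\|_{L^\infty_x}+|u'|_\infty .
\]
We need bounds on $\rho,j$ in $L^\infty$ that depend on $E$ in a controlled, at most linear, way.

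\textbf{Moment bounds.} Use characteristics: $f^\pm$ is transported by $(1,v,\gamma E)$. Along a characteristic, $|V(s)|\le |v|+|\gamma|\|E\|_\infty t$. Combined with $|v|^kf\le \|(1+|v|)^k\tilde f\|_\infty+\|(1+|v|)^kg\|_\infty$ in $L^\infty$, this gives
\[
f^\pm(t,x,v)\lesssim (1+|v|)^{-k}\bigl(1+\|E\|_\infty T\bigr)^k .
\]
With $k=p\geq 3$ this makes $\rho$ and $j$ bounded by a polynomial in $\|E\|_\infty$, since $(1+|v|)^{-3}$ is integrable. This is where $p$ enters. If $p=2$ only, $j$ would need to come from BV in $x$ of the moments instead.

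Next we need $\|E\|_\infty$ bounded a priori. Poisson gives $|\partial_x\phi|\le \frac{1}{\epsilon_0}\int_0^L|q\rho|+|u|/L$. The $L^1$ mass $\|f(t)\|_{L^1}$ is controlled by Proposition \ref{proposition/streamtube}, because $(1,v,\gamma E)\cdot Df=0$, so the mass is bounded by the initial mass plus the injected flux. Controlling $\sup_x\rho$ needs more. Following \cite{Guo}, I would restrict to a short time $T_1$ and the ball $K=\{\|E\|_{W^{1,\infty}(Q_{T_1})}\le M\}$. Choose $M$ large, from the data and $|u|_{W^{1,\infty}}$, and then $T_1$ small so that $\Lambda(K)\subset K$, using the polynomial bounds above. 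The bound on $\partial_x\Lambda(E)$ is $\rho$ and the bound on $\partial_t\Lambda(E)$ is $j$, both bounded on $K$. $K$ is convex and compact in $C^0(Q_{T_1})$ by Arzelà–Ascoli.

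\textbf{Continuity of $\Lambda$ in $C^0$.} For $E_n\to E$ uniformly in $K$, use the weak formulation \eqref{m_weak_form} and the uniform BV bounds: the $f_n$ are compact in $L^1_{loc}$ by Theorem \ref{theoreme/compacite}, and the uniform moment tails give $L^1$ compactness. Any limit is the weak solution for $E$, which is unique by Theorem \ref{m_def_weaksol_vlasov} extended as in Step 2. So $\rho_n\to\rho$ and $\Lambda(E_n)\to\Lambda(E)$. Schauder then gives a fixed point on $[0,T_1]$, and $\phi$ recovered from $E=-\partial_x\phi$ lies in $W^{2,\infty}$ since $\partial_{xx}\phi\propto\rho\in L^\infty$.

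\textbf{Globalization — the main obstacle.} To iterate on adjacent intervals up to $T$, $T_1$ must depend only on quantities that do not blow up. I would prove an a priori bound on $\|E\|_{L^\infty}$ on $[0,T]$ by the classical 1D argument. Since $|E|\le C(\|f\|_{L^1}+|u|)$ and the $L^1$ mass is bounded uniformly on $[0,T]$ by Proposition \ref{proposition/streamtube}, $\|E\|_\infty$ stays bounded. Then velocities grow at most linearly, so $\rho$ and $j$ stay bounded and $\|\partial_x E\|_\infty$, $\|\partial_t E\|_\infty$ stay bounded. Hence $T_1$ can be fixed uniformly and the construction extends to $[0,T]$. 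The BV regularity statements then follow from Proposition \ref{proposition:regularite_vlasov} applied with the final field.

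\textbf{Uniqueness.} Take two solutions and set $\delta E=E_1-E_2$. I would estimate $\|f_1-f_2\|_{L^1}$ through the equation
\[
\partial_t(f_1-f_2)+v\partial_x(f_1-f_2)+\gamma E_1\partial_v(f_1-f_2)=-\gamma\,\delta E\,\partial_vf_2 ,
\]
with zero data. Here $\partial_vf_2(t)$ is a bounded measure, by Theorem \ref{theoreme/desintegration} and the BV bound. This gives
\[
\|(f_1-f_2)(t)\|_{L^1}\le C\int_0^t\|\delta E\|_\infty\,TV(f_2(s))\,ds .
\]
Since $u$ is common to both solutions, $\|\delta E(t)\|_\infty\le C\|(f_1-f_2)(t)\|_{L^1}$, and Gronwall concludes. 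The delicate point is justifying the $L^1$ estimate for a difference of BV weak solutions. I would do it via the renormalized chain rule of Theorem \ref{m_chain_rule} with $|\cdot|$ and the Green formula of Proposition \ref{proposition/streamtube}.
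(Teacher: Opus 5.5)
Your argument is correct in outline and follows the paper's skeleton: Schauder for $\Lambda$, $L^\infty$ moment bounds along characteristics, and uniqueness via the chain rule, Proposition \ref{proposition/streamtube}, disintegration of $\partial_v f_2$ and Gr\"onwall. The existence part, however, is organized differently.

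\textbf{No time localization is needed.} The paper works on all of $[0,T]$ at once. The mass estimate bounds $\|\Lambda(E)\|_{L^\infty(Q_T)}$ by a constant $M$ that depends only on the data and $\|u\|_{L^\infty}$, not on $E$. So the paper fixes $M$ first and only then the derivative radius $P(TM)$, and $B_{M,T}$ is invariant on $[0,T]$ with no smallness condition. This is exactly the observation you use only in your globalization step. If you build it into the invariant set, with separate radii for $\|E\|_\infty$ and for the derivatives instead of one $W^{1,\infty}$ ball, the short-time restriction and the restart bookkeeping both disappear.

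\textbf{Continuity of $\Lambda$.} The paper gets a Lipschitz bound in $L^\infty$ from the same $L^1$ stability estimate you keep for uniqueness. The source is $-\gamma\psi\,\partial_v f_2$, and the constant is proportional to $|\partial_v f_2|(\Omega_T)$, uniform on $B_{M,T}$. Your compactness route also works, but it needs two extra ingredients:
\begin{itemize}
\item uniqueness of BV weak solutions of the linear problem when the data have unbounded velocity support. Theorem \ref{m_def_weaksol_vlasov} is stated only for compactly supported data; uniqueness does follow from the same $L^1$ estimate with zero source.
\item an upgrade from $L^1$ convergence of $\rho_n$ to uniform convergence of $\Lambda(E_n)$. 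You can get this from the compactness of $K$ in $C^0$.
\end{itemize}

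\textbf{The restriction $p\ge 3$.} Your restriction to $p\ge 3$ is not a weakness relative to the paper. The paper's bound on $\partial_t\Lambda(E)$ also uses $\||v|^3 f^\pm\|_{L^\infty}$, which the hypotheses provide only for $p\ge 3$. Your suggested fix for $p=2$ does work, provided the radii are chosen in order:
\begin{enumerate}
\item Bound $\rho$, and hence $\partial_x\Lambda(E)$, through $\|(1+v^2)f\|_{L^\infty}$. This depends only on $T\|E\|_\infty$, so it fixes a radius $M_1$ for $\partial_x E$.
\item On the set $\{\|E\|_\infty\le M,\ \|\partial_x E\|_\infty\le M_1\}$, the BV bounds of Lemma \ref{lemme/variationtotale} on $|v|f(t)$ are uniform in $t$ and do not involve $\partial_t E$.
\item The one-dimensional embedding $BV(0,L)\subset L^\infty(0,L)$ then gives $\|j(t)\|_{L^\infty(0,L)}\le \frac{1}{L}\||v|f(t)\|_{L^1(\mathcal{O})}+|\partial_x(|v|f(t))|(\mathcal{O})$. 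This fixes the radius for $\partial_t E$.
\end{enumerate}
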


\begin{proof}

    The first part of the proof consists in finding $W^{1,\infty}$ estimates on $\Lambda(E)$. Therefore, we start by bounding $\Lambda(E)$.  We notice that for $t \in [0,T],$
    
    $$
    \left|\int_0^x (q_+\rho^++q_-\rho^-)(t,u)du \right| \leq q_+\|f^+(t)\|_{L^1(\mathcal{O})}+ |q_-|\|f^-(t)\|_{L^1(\mathcal{O})}.
    $$
    
For each species, we have the mass estimate, for all $t \in [0,T]$,

    $$\|f^{\pm}(t)\|_{L^1(\mathcal{O})} \leq \|\tilde{f}^{\pm}\|_{L^1(\mathcal{O})} + \| vg^{\pm} \|_{L^1((0,T) \times \mathbb{R}^{+})}.$$
    
So we obtain directly from \eqref{systeme/ptfixe1}

\begin{align}
    &\epsilon_{0} \|\Lambda(E)\|_{L^\infty(Q_T)}\leq \frac{\varepsilon_0}{L}\|u\|_{L^\infty((0,T))} \\ \label{m_estimate_E}
    &+ 2(q_++|q_-|)(\|\tilde{f}^+\|_{L^1(\mathcal{O})} + \|v g^+ \|_{L^1((0,T)\times\mathbb{R}^+)} + \|\tilde{f}^-\|_{L^1(\mathcal{O})} + \|v g^- \|_{L^1((0,T)\times\mathbb{R}^+)}). \nonumber
\end{align}

We then turn to $\partial_x \Lambda(E)$. As  $ \epsilon_{0} \partial_x \Lambda(E)=q_+\rho^++q_-\rho^-$, 

\begin{eqnarray*}
    \epsilon_{0} \|\partial_x\Lambda(E)\|_{L^\infty(Q_T)} &\leq & (q_+\|\rho^+\|_{L^\infty(Q_T)} +|q_-| \|\rho^-\|_{L^\infty(Q_T)} ) \\
    &\leq &(q_+ +|q_-|) \int_\mathbb{R}\dfrac{dv}{1+v^2}(\|(1+v^2)f^+\|_{L^\infty(\Omega_T)} + \|(1+v^2)f^-\|_{L^\infty(\Omega_T)} ).
\end{eqnarray*}

Using the expression of $\partial_t\Lambda(E)$, we get in the same vein

\begin{align*}
    &\epsilon_{0} \|\partial_t\Lambda(E)\|_{L^\infty(Q_T)} \leq \frac{\epsilon_{0}}{L}\|u'\|_{L^\infty((0,T))} \\
    &+ 2(q_++|q_-|) \int_\mathbb{R}\dfrac{dv}{1+v^2}(\|(|v|+|v|^3)f^+\|_{L^\infty(\Omega_T)} + \|(|v|+|v|^3)f^-\|_{L^\infty(\Omega_T)} ).
\end{align*}

Hence, we see that we need bounds on $\||v|^k f^\pm\|_{L^\infty(\Omega_T)}$ for $k=0,1,2,3$. To this end, we recall that for $(\tau,x,v) \in \Omega_{T}$, we denote by $t\in [0,T] \longmapsto  (X(t,\tau,x,v), V(t,\tau,x,v) )$ the characteristic curve associated with the field $(v,\gamma_{\pm} E)$ which starts from $(\tau,x,v)$. We define the three functions below:
    
    $$
    \Gamma_1(t)= V(t,\tau,x,v) f^\pm(t,X(t,\tau,x,v),V(t,\tau,x,v)),
    $$
    $$
    \Gamma_2(t)= V(t,\tau,x,v)^2 f^\pm(t,X(t,\tau,x,v),V(t,\tau,x,v)),
    $$
    $$
    \Gamma_3(t)= V(t,\tau,x,v)^3 f^\pm(t,X(t,\tau,x,v),V(t,\tau,x,v)).
    $$
    
    The function $f^\pm$ is known to be constant on the characteristic curves. It is given explicitly by
    
    $$
    f^{\pm}(t,x,v) = \mathbb{1}_{\tau(t,x,v) = 0} \tilde{f}\Big(X(0,t,x,v), V(0,t,x,v)\Big) +\mathbb{1}_{\tau(t,x,v) > 0} g^{\pm}\Big(\tau(t,x,v), V(\tau(t,x,v),t,x,v)\Big),
    $$
    
    \noindent where $\tau(t,x,v)$ denotes the time of exit of the trajectory $t \in [0,T] \longmapsto (t,X(t,x,v),V(t,x,v))$ from $\Omega_{T}$. We can therefore differentiate $\Gamma_1$ to get for $t \in (0,T)$,
    
    $$
    \Gamma'_1(t) = \gamma_\pm E(t,X(t,\tau,x,v))f^\pm(t,X(t,\tau,x,v),V(t,\tau,x,v)).
    $$
    
    Integrating the previous inequality on $[\tau,t]$ yields  $|\Gamma_1(t)|\leq |\Gamma_1(\tau)| + \int_\tau^t |\Gamma_1'(s)|ds$, so that we get  
    
   $$
 \|vf^\pm \|_{L^\infty(\Omega_T)} \leq \|v \tilde{f}^\pm\|_{L^{\infty}(\mathcal{O})}+ \|v g^\pm\|_{L^{\infty}((0,T)\times\mathbb{R})} +  T |\gamma_\pm| \|E\|_{L^\infty(Q_T)} \|f^\pm \|_{L^\infty(\Omega_T)}.
    $$ 
    
    Analogous computations lead us to the other bounds:
    
    $$
    \|v^2f^\pm \|_{L^\infty(\Omega_T)} \leq \|v^2 \tilde{f}^\pm\|_{L^{\infty}(\mathcal{O})}+ \|v^2 g^\pm\|_{L^{\infty}((0,T)\times\mathbb{R})} +  2T |\gamma_\pm| \|E\|_{L^\infty(Q_T)} \|vf^\pm \|_{L^\infty(\Omega_T)},
    $$
    $$
    \|v^3f^\pm \|_{L^\infty(\Omega_T)} \leq \|v^3 \tilde{f}^\pm\|_{L^{\infty}(\mathcal{O})}+ \|v^3 g^\pm\|_{L^{\infty}((0,T)\times\mathbb{R})} +  3T |\gamma_\pm |\|E\|_{L^\infty(Q_T)} \|v^2f^\pm \|_{L^\infty(\Omega_T)}.
    $$
    
    Moreover, the maximum principle states that 
    
    $$\|f^\pm \|_{L^\infty(\Omega_T)}\leq   \max(\|\tilde{f}^\pm\|_{L^\infty(\mathcal{O})},\|g^\pm\|_{L^\infty(Q_{T})}).$$
    
    So, there exists a polynomial $P(X)=AX^3+BX^2+CX+D$ with positive coefficients $A,B,C,D$ depending only on the data   such that 
    
    $$
     \|\partial_t\Lambda(E)\|_{L^\infty(Q_T)} +  \|\partial_x\Lambda(E)\|_{L^\infty(Q_T)} \leq P(T\|E\|_{L^\infty(Q_T)}) .
    $$
    
    We are ready to apply the Schauder fixed point theorem. We have just proven that the range of $\Lambda$ is a subset of 
    
    \begin{equation*}
     B_{M,T} =\left\lbrace
        \begin{array}{cc}
            E \in W^{1,\infty}(Q_T) \ : \ \|E\|_{L^\infty(Q_T)}\leq M ; \ \|\partial_t E\|_{L^\infty(Q_T)} +  \|\partial_x E\|_{L^\infty(Q_T)} \leq P(T M) 
        \end{array}\right\rbrace,
    \end{equation*}
    
\noindent  where $\epsilon_{0} M = \frac{\varepsilon_0}{L}\|u\|_{L^\infty((0,T))} + (q_++|q_-|)(\|\tilde{f}^+\|_{L^1(\mathcal{O})} + \|v g^+ \|_{L^1((0,T)\times\mathbb{R}^+)} + \|\tilde{f}^-\|_{L^1(\mathcal{O})} + \|v g^- \|_{L^1((0,T)\times\mathbb{R}^+)})$. Thus, we see that $\Lambda (B_{M,T}) \subset B_{M,T}$  where $B_{M,T}$ is closed convex subset of $L^{\infty}(Q_T)$ endowed with the norm $\|.\|_{L^{\infty}(Q_T)}$. We also see that $B_{M,T}$ is compact for the $L^{\infty}(Q_{T})$ topology. Indeed, any sequence of functions in $B_{M,T}$ satisfies the assumptions of Ascoli theorem, thus has a subsequential limit that converges for the norm $\|.\|_{L^\infty(Q_T)}$ to an element of $B_{M,T}$. So far, we have proven that $\Lambda : B_{M,T} \mapsto B_{M,T}$ and that $B_{M,T}$ is compact for the $\|.\|_{L^\infty(Q_T)}$ norm. We now prove that $\Lambda$ is Lipschitz continuous. To this end, consider two elements $E_1$, $E_2$ of $W^{1,\infty}(Q_T)$ and the associated densities $f_1^\pm, f_2^\pm$. Set $h^\pm = f_1^\pm-f_2^\pm$, and $\psi=E_1-E_2$, then $h^\pm \in BV(\Omega_{T})$ satisfies the following transport equation with source term 

\begin{equation}
\partial_t h^\pm + v \partial_x h^\pm +\gamma_+ E_1 \partial_v h^\pm = -\gamma_\pm \psi \partial_vf_2^\pm  \textnormal{ in } \mathcal{M}(\Omega_{T}) ,
\end{equation}

\noindent where the initial and boundary conditions are zero. By the chain rule theorem in $BV$ (Theorem \ref{theoreme/chainrule}), $|v^{k} h^\pm| \in  BV(\Omega_T)$ for $k=0,...,p-1$. Using an $L^1$ estimate along the field $(1,v,\gamma_\pm E_1)$ (Proposition \ref{proposition/streamtube}) we find that 

$$
\|h^\pm(t)\|_{L^1(\mathcal{O})} \leq \int_{\Omega_t} \begin{pmatrix}
    1 \\
    v \\
    \gamma_\pm E_1
\end{pmatrix}
\cdot d(D|h^\pm|).
$$

Using again the chain rule (Theorem \ref{m_chain_rule}), we have that $|h^{\pm}| \in BV(\Omega_{T})$ and moreover its derivative is a measure that decomposes as

$$
D|h^\pm| = \textnormal{sgn}(\overline{h}^\pm) \tilde{D}h^\pm + \dfrac{|(h^\pm)_+| - |(h^\pm)_-|}{h^\pm_+-h^\pm_-} D_{jump}h^\pm,
$$

\noindent where $\tilde{D}h^{\pm}$ is the diffuse part and $ D_{jump} h^{\pm}$ is the jump part. We can summarize this equality by writing that 

$$
D|h^\pm| = F(\overline{h}^\pm, h^\pm_+, h^\pm_-) Dh^\pm
$$

\noindent with $F(\overline{h}^\pm, h^\pm_+, h^\pm_-)$ a functional bounded by $1$. Using this fact, and the PDE satisfied by $h^\pm$, we get that

$$
\|h^\pm(t)\|_{L^1(\mathcal{O})} \leq -\gamma_\pm \int_{\Omega_T} F(\overline{h}^\pm, h^\pm_+, h^\pm_-) \psi  d(\partial_vf_2^\pm).
$$

This last term can be bounded by $$|\gamma_\pm||\partial_v f^\pm_2|(\Omega_T) \|\psi\|_{L^\infty(Q_T)}.$$ By the definition of  $\Lambda$ \eqref{systeme/ptfixe1}, we obtain easily that 

$$
\epsilon_{0} \| \Lambda(E_1)-\Lambda(E_2)\|_{L^\infty(Q_T)} \leq 2|\gamma_\pm||\partial_v f^\pm_2|(\Omega_T) \|E_1-E_2\|_{L^\infty(Q_T)}.
$$

Schauder theorem applies and $\Lambda$ admits a fixed point. We may now prove uniqueness separately. We consider two triplets $(E_1,f_1^+,f_1^-)$, $(E_2,f_2^+,f_2^-)$ solutions of the Vlasov-Poisson system. Using the same notations as before, $h^\pm$ satisfies the Vlasov equation with a source term, and we obtain for $t \in [0,T]$

$$
\|h^\pm(t)\|_{L^1(\mathcal{O})} \leq -\gamma_\pm \int_{ \Omega_t} \psi F(\overline{h}^\pm, h^\pm_+, h^\pm_-) d(\partial_vf_2^\pm).
$$

Thanks to  Theorem \ref{theoreme/desintegration}, we  disintegrate the measure $d(\partial_{v}f_2^\pm)$  to get 

$$
\|h^\pm(t)\|_{L^1(\mathcal{O})} \leq -\gamma_\pm \int_0^t\int_{0}^{L}\int_\mathbb{R} \psi(s,x) F(\overline{h}^\pm, h^\pm_+, h^\pm_-) d(\partial_vf_2^\pm(s)) ds.
$$

The right-hand side is easily bounded and 

$$
\|h^\pm(t)\|_{L^1(\mathcal{O})} \leq |\gamma_\pm| \int_0^t   \sup_{x\in(0,L)}|\psi(s,x)|   TV(f_2^\pm(s),\mathcal{O}) ds.
$$

Using the uniform estimates on the total variation along with the expression of $\psi$ and mass estimates, we get that 

$$
\|h^+(t)\|_{L^1(\mathcal{O})} + \|h^-(t)\|_{L^1(\mathcal{O})} \leq K \int_0^t \|h^+(s)\|_{L^1(\mathcal{O})} + \|h^-(s)\|_{L^1(\mathcal{O})}ds,
$$

\noindent where $K \geq 0$ is a constant that depends only on the data. The Grönwall states that $\|h^+(t)\|_{L^1((0,L)\times\mathbb{R})} + \|h^-(t)\|_{L^1((0,L)\times\mathbb{R})}=0 $ for every $t\in[0,T]$. This implies directly that $\psi=0$.
\end{proof}

\section{The Vlasov-Poisson-Ampère system} \label{sec:Vlasov-Poisson-floating}

We now come to the proof of the main result (Theorem \ref{MAIN_RESULT}). Thus, throughout this section, we assume the hypotheses of the main result. For the sake of conciseness, we shall omit in the following mathematical statements to specify the assumptions of regularity on the data. The proof proceed again by a fixed point argument. To prove the global in time existence, we have first to prove the existence on $[0,T_{1}]$, with $T_{1}$ small enough, and then repeat the argument on adjacent small-time intervals. As in the first step, the constants in the estimates grow as $T_{1}$ increases, we cannot repeat the process trivially. To do so, we use an energy estimates and the fact that the floating potential equation enables us to give a uniform in $T_{1}$ bound on $\phi(t,L)$ which depends only on the data. Let $0 < T_{1}  < T$ to be specified later and consider the map

\begin{align}
  \Psi : W^{1,\infty}([0,T_1])\mapsto W^{1,\infty}([0,T_1])  
\end{align}

\noindent where for a given $u\in  W^{1,\infty}([0,T_1]) $, $\Psi(u)$ is defined by 

$$
\Psi(u)(t): =\beta+\dfrac{1}{\epsilon_{0}}\int_0^t\int_{0}^{L} (q_+j^++q_-j^-)(\tau,s)ds d\tau.
$$

In this expression, the currents $j^{+}$ and $j^{-}$ are given by the solutions $f^+,f^-$ of the Vlasov-Poisson system

\begin{equation}
\left\lbrace
\begin{array}{ll}
\partial_t f^\pm + v \partial_x f^\pm +\gamma_\pm E \partial_v f^\pm = 0 &  \textnormal{ in }  \Omega_{T_1}, \\ 
f^\pm(0,x,v) = \tilde{f}^{\pm}(x,v) & \textnormal{ if } (x,v)\in \mathcal{O},  \\
f^\pm(t,0,v) = g^\pm(t,v)   & \textnormal{ if }(t,v)\in (0,T_1)\times\mathbb{R}^{+\star},\\
f^\pm(t,L,v) = 0   & \textnormal{ if } (t,v)\in (0,T_1)\times\mathbb{R}^{-\star} ,
\end{array}\right.
\end{equation}
\begin{multline}
E(t,x) = -\dfrac{1}{L\epsilon_{0}}\int_{0}^{L}\int_0^s (q_+\rho^+ +q_-\rho^-)(t,y) dyds + \frac{1}{\epsilon_{0}}\int_0^x(q_+\rho^+ +q_-\rho^-)(t,y) dy \\ -\frac{1}{L} u(t)  \mbox{ in } Q_{T_{1}}.
\end{multline}

This operator is well-defined since the solution of the Vlasov-Poisson system is such that $|v|f^{+},|v|f^{-} \in BV(\Omega_{T_1})$.
Solutions of the Vlasov-Poisson-Ampère system \eqref{m_vlasov+}-\eqref{m_initial_datum} correspond to fixed points of $\Psi$.

\begin{lemme}[Invariant set]\label{m_Invariant_Set_PSI} There are constants $C > |\beta|$, $M \geq 2C +1 \sqrt{2C}$ and $0 < T_1 \leq \min(\frac{1}{\sqrt{2C}},T)$ such that
$\Psi(K_{M,T_{1}}) \subset B_{M,T_{1}}$ where $B_{M,T_1} = \Big \lbrace u \in W^{1,\infty}(0,T_1) \  :  \ u(0)=\beta \ ; \ \|u\|_{L^\infty(0,T_1)}\leq M \ ; \   \|u'\|_{L^\infty(0,T_1)}\leq C(1+T_1(1+M))   \Big\rbrace.$
\end{lemme}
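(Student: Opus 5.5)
Note: $K_{M,T_1}$ in the statement is presumably $B_{M,T_1}$ itself, so the claim is $\Psi(B_{M,T_1})\subset B_{M,T_1}$. The plan is to bound, for $u\in B_{M,T_1}$, the total current $\int_0^L (q_+j^++q_-j^-)(t,x)\,dx$ uniformly in $t\in[0,T_1]$ by an affine function of $T_1 M$. Then $\Psi(u)(0)=\beta$ holds trivially. The derivative bound follows from
\[
\epsilon_0\Psi(u)'(t)=\int_0^L (q_+j^++q_-j^-)(t,x)\,dx,
\]
and the sup bound from $|\Psi(u)(t)|\le |\beta|+T_1\|\Psi(u)'\|_{L^\infty}$.

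\textbf{Step 1: control the current by a moment.} I will bound $|\int_0^L j^\pm(t,x)dx|\le \||v|f^\pm(t)\|_{L^1(\mathcal{O})}$. There are two ways to control this:
\begin{itemize}
\item via the $L^\infty$ characteristic bounds from the proof of the Vlasov--Poisson theorem, $\|(|v|+|v|^3)f^\pm\|_{L^\infty(\Omega_{T_1})}$, integrated against $(1+v^2)^{-1}$;
\item via the $L^1$ moment estimate of Proposition \ref{proposition/streamtube} applied to $p=|v|f^\pm$, whose weak derivative along $(1,v,\gamma_\pm E)$ equals $\mathrm{sgn}(v)\gamma_\pm E f^\pm$.
\end{itemize}
The second route is cleaner. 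It gives
\[
\||v|f^\pm(t)\|_{L^1}\le \||v|\tilde f^\pm\|_{L^1}+T_1\sup_s\int v^2 g^\pm\,dv+|\gamma_\pm|T_1\|E\|_{L^\infty}\sup_s\|f^\pm(s)\|_{L^1}.
\]
The mass estimate bounds $\|f^\pm(s)\|_{L^1}$ by data only.

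\textbf{Step 2: bound the field in terms of $u$.} The bound \eqref{m_estimate_E} gives $\|E\|_{L^\infty(Q_{T_1})}\le c_0+\|u\|_{L^\infty}/L\le c_0+M/L$, with $c_0$ depending only on the data. Combining with Step 1, there is a constant $C$ depending only on the data (and chosen $>|\beta|$) such that $|\Psi(u)'(t)|\le C(1+T_1(1+M))$. This is exactly the derivative condition in $B_{M,T_1}$.

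\textbf{Step 3: close the sup bound.} We need $|\beta|+T_1C(1+T_1(1+M))\le M$. With $T_1\le 1/\sqrt{2C}$ we get $T_1^2C\le 1/2$, so the left side is at most
\[
|\beta|+T_1C+\tfrac12(1+M)\le C+\sqrt{C/2}+\tfrac12+\tfrac M2 .
\]
This is $\le M$ as soon as $M\ge 2C+\sqrt{2C}+1$, which matches the condition in the statement, read as $M\ge 2C+1+\sqrt{2C}$.

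\textbf{Main obstacle.} The delicate step is rigorously justifying the moment estimate for $|v|f^\pm$ through Proposition \ref{proposition/streamtube}. This needs $|v|f,\,v^2f\in BV(\Omega_{T_1})$, which holds for $p\ge4$ by Proposition \ref{proposition:regularite_vlasov}. It also needs the identity $(1,v,\gamma E)\cdot D(|v|f)=\gamma E\,\mathrm{sgn}(v)f$ in measures; I will obtain it from the chain/product rule together with $(1,v,\gamma E)\cdot Df=0$, established in the proof of Proposition \ref{proposition:regularite_vlasov}.

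If that identity proves awkward at $v=0$, the fallback is the $L^\infty$ characteristic route. There the cubic polynomial $P(T\|E\|)$ appears, and the same smallness argument on $T_1$ must absorb it; this would only change the constants.
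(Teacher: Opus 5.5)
Your proposal is correct and follows essentially the same route as the paper. The paper also applies Proposition \ref{proposition/streamtube} to $|v|f^\pm$, combines it with the mass estimate and the bound $\|E\|_{L^\infty}\le c_0+\|u\|_{L^\infty}/L$ to get $\|\Psi(u)'\|_{L^\infty}\le C(1+T_1(1+M))$, and closes the sup bound under $T_1\le 1/\sqrt{2C}$ with the same threshold $M\ge 2C+1+\sqrt{2C}$; you do the final algebra directly, while the paper argues via monotonicity in $T_1$ of $C(1+T_1+T_1^2)/(1-CT_1^2)$. Your readings of $K_{M,T_1}$ as $B_{M,T_1}$ and of the threshold as $2C+1+\sqrt{2C}$ agree with the paper's proof.
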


\begin{proof}
Let $u \in L^{\infty}(0,T_{1}).$ Let us recall the classic bound on $E$:
 
    \begin{multline*}
    \| E\|_{L^\infty(Q_{T_1})} \leq    \frac{\|u\|_{L^\infty((0,T_1))}}{L} \\ + 2\frac{(q_++|q_-|)}{\epsilon_{0}} ( \|\tilde{f}^+ \|_{L^1(\mathcal{O})} +  \|\tilde{f}^- \|_{L^1(\mathcal{O})} +  \|vg^+ \|_{L^1((0,T_1)\times\mathbb{R}^+)} +  \|vg^- \|_{L^1((0,T_1)\times\mathbb{R}^+)}).
    \end{multline*}
    
We provide a bound on the currents by Proposition \ref{proposition/streamtube} applied to $|v|f^\pm$ and an estimate of the total mass yields

\begin{align*}
        & \underset{ 0 \leq t \leq T_{1} } \sup \|vf^\pm(t) \|_{L^1(\mathcal{O})} \leq  \|v^2g^\pm \|_{L^1((0,T_1)\times\mathbb{R}^+)} +  \|v\tilde{f}^\pm \|_{L^1(\mathcal{O})} \\
        &+ T_1 ( \|vg^\pm \|_{L^1((0,T_1)\times\mathbb{R}^+)}+  \|\tilde{f}^\pm \|_{L^1(\mathcal{O})}) \| E\|_{L^\infty(Q_{T_1})}.
\end{align*}
Using the expression of $\Psi(u)$, we find readily that there exists $C>|\beta|$ such that 

    $$
    \| \Psi(u) \|_{L^\infty(0,T_1)} \leq C(1+T_1+T_1^2(1+\|u\|_{L^\infty(0,T_1)} )),
    $$
    $$
     \| \Psi(u)' \|_{L^\infty(0,T_1)}  \leq C(1+T_1(1+\|u\|_{L^\infty(0,T_1)} )).
     $$
     
Since $\|u\|_{L^\infty(0,T_1)}\leq M$, we see that $\| \Psi(u) \|_{L^\infty(0,T_1)}\leq M$ if $C(1+T_1+T_1^2(1+M ))\leq M$. Since $T_{1}^2 \leq \frac{1}{2C}$, this last inequality is equivalent to $M \geq \frac{C(1+T_{1} + T_{1}^2)}{(1-CT_{1}^2)}$. Now observe that the lower bound in the previous inequality is an increasing  function of the variable $T_{1}.$ So, for $T_{1} \leq \frac{1}{\sqrt{2C}}$ we have  $\frac{C(1+T_{1} + T_{1}^2)}{(1-CT_{1}^2)} \leq 2C +1 +\sqrt{2C}.$ So, it is sufficient to pick $M \geq 2C +1 +\sqrt{2C}.$

\end{proof}

\begin{lemme}[Fixed point of  $\Psi$] \label{m_Lipschitz_PSI}
    Fix the constants $C > |\beta|$, $T_{1} \leq \min(\frac{1}{\sqrt{2C}},T)$ and $ M \geq 2C +1 + \sqrt{2C}.$ Then  $\Psi: B_{M,T_1}\mapsto B_{M,T_1}$ is  Lipschitz continuous for the norm $\| \cdot \|_{L^{\infty}(0,T_{1})}$ and admits a unique fixed point.
\end{lemme}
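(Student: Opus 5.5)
The plan is to show that $\Psi$ satisfies a \emph{Volterra-type} Lipschitz estimate on $B_{M,T_1}$,
\begin{equation*}
|\Psi(u_1)(t)-\Psi(u_2)(t)| \leq K\int_0^t \|u_1-u_2\|_{L^\infty(0,s)}\,ds, \qquad t\in[0,T_1],
\end{equation*}
with $K$ depending only on the data, $M$, $C$ and $T$. It does not depend on $T_1$ being smaller than some further threshold. This immediately gives the Lipschitz bound $\|\Psi(u_1)-\Psi(u_2)\|_{L^\infty(0,T_1)}\leq KT_1\|u_1-u_2\|_{L^\infty(0,T_1)}$. For the fixed point, I would avoid requiring $KT_1<1$: equip $B_{M,T_1}$ with the equivalent weighted norm $\|u\|_\lambda=\sup_{t}e^{-\lambda t}|u(t)|$ with $\lambda=2K$, for which the Volterra estimate yields a contraction of factor $1/2$. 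Equivalently, iterate and obtain $\|\Psi^n u_1-\Psi^n u_2\|\le \frac{(KT_1)^n}{n!}\|u_1-u_2\|$.

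The set $B_{M,T_1}$ is closed in $L^\infty$, since the constraints $u(0)=\beta$, the sup bound and the uniform Lipschitz bound all pass to uniform limits. It is therefore a complete metric space, and it is invariant by Lemma \ref{m_Invariant_Set_PSI}, which applies for exactly the stated ranges of $C,M,T_1$. Banach's theorem then gives existence and uniqueness of the fixed point in $B_{M,T_1}$ for every $T_1\le\min(1/\sqrt{2C},T)$.

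To derive the Volterra estimate, I proceed as follows.
\begin{itemize}
\item Let $f_i^\pm,E_i$ be the Vlasov--Poisson solutions associated with $u_i$, and set $h^\pm=f_1^\pm-f_2^\pm$ and $\psi=E_1-E_2$. Then
\begin{equation*}
|\Psi(u_1)-\Psi(u_2)|(t)\le \epsilon_0^{-1}\sum_\pm |q_\pm|\int_0^t\| v h^\pm(s)\|_{L^1(\mathcal{O})}ds.
\end{equation*}
\item As in the uniqueness part of the Vlasov--Poisson theorem, $h^\pm$ solves the transport equation along $(1,v,\gamma_\pm E_1)$ with source $-\gamma_\pm\psi\,\partial_v f_2^\pm$ and zero data. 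Proposition \ref{proposition/streamtube} together with the chain rule (Theorem \ref{m_chain_rule}) and the disintegration (Theorem \ref{theoreme/desintegration}) give
\begin{equation*}
\|h^\pm(t)\|_{L^1}\le|\gamma_\pm|\int_0^t\|\psi(s)\|_\infty TV(f_2^\pm(s),\mathcal{O})\,ds.
\end{equation*}
\item Applying the same argument to $|v||h^\pm|$ gives
\begin{equation*}
\|vh^\pm(t)\|_{L^1}\le|\gamma_\pm|\int_0^t\big(\|\psi(s)\|_\infty TV(|v|f_2^\pm(s))+\|E_1\|_\infty\|h^\pm(s)\|_{L^1}\big)ds .
\end{equation*}
The extra term comes from $\partial_v|v|$ in the product rule.
\item From the formula for $E$ one has
\begin{equation*}
\|\psi(s)\|_\infty\le \tfrac{2(q_++|q_-|)}{\epsilon_0}\big(\|h^+(s)\|_{L^1}+\|h^-(s)\|_{L^1}\big)+\tfrac1L|u_1(s)-u_2(s)|.
\end{equation*}
\end{itemize}
Grönwall's lemma applied to $H=\|h^+\|_{L^1}+\|h^-\|_{L^1}$ then gives $H(t)\le K'\int_0^t|u_1-u_2|$. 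Feeding this back into the $|v|$-moment estimate and into the first inequality yields the Volterra estimate.

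The main obstacle is the uniformity of $K$. I need $\sup_s TV(f_2^\pm(s))$ and $\sup_s TV(|v|f_2^\pm(s))$, together with $\|E_1\|_\infty$, to be bounded in terms of $M$ and the data only, uniformly over $u_2\in B_{M,T_1}$. The plan is to trace the constants: for $u\in B_{M,T_1}$, the Schauder set in the Vlasov--Poisson theorem bounds $\|E\|_{W^{1,\infty}}$ by a quantity depending on $M$, on $C(1+T_1(1+M))$ and on $T$. Lemma \ref{m_BV_estimate_main_terms}, whose Grönwall constants depend only on $\|E\|_{L^\infty}$ and $\|\partial_xE\|_{L^\infty}$ (with $k=0,1\le p$), then gives the required uniform total-variation bounds, passed to the limit through lower semicontinuity as in the proof of Proposition \ref{proposition:regularite_vlasov}.
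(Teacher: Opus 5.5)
Your proposal is correct, and its analytic core is the paper's: the same $L^1$ estimate for $h^\pm$ along $(1,v,\gamma_\pm E_1)$ via Proposition~\ref{proposition/streamtube}, the chain rule and disintegration, the same bound on $\psi$, Grönwall for $\|h^+\|_{L^1(\mathcal{O})}+\|h^-\|_{L^1(\mathcal{O})}$, and the same $|v|$-moment estimate with the extra $\gamma_\pm\,\textnormal{sgn}(v)E_1|h^\pm|$ term.

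Where you genuinely differ is the fixed-point mechanism. The paper records only $\|\Psi(u_1)-\Psi(u_2)\|_{L^\infty(0,T_1)}\leq C_{T_1}\|u_1-u_2\|_{L^\infty(0,T_1)}$, with a constant that is not shown to be small. It gets existence from Schauder's theorem, with continuity from this bound and compactness of $B_{M,T_1}$ in $L^\infty$ from the uniform bound on $u'$. It then proves uniqueness separately by applying Grönwall to $\|u_1-u_2\|_{L^\infty(0,t)}\leq \frac{C_{T_1}}{\epsilon_0}\int_0^t\|u_1-u_2\|_{L^\infty(0,\tau)}\,d\tau$, which is exactly your Volterra inequality.

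You use that structure only once. In the norm $\sup_t e^{-2Kt}|u(t)|$, $\Psi$ is a $\tfrac12$-contraction on the closed, hence complete, set $B_{M,T_1}$, so Banach's theorem gives existence and uniqueness together. This needs no compactness: the bound on $u'$ then serves only for invariance and for the $W^{1,\infty}$ control of $E$ that feeds the BV constants. It also gives convergence of the Picard iterates, and it makes explicit the uniformity of the constants over $u_2\in B_{M,T_1}$, which the paper leaves implicit. Schauder would only be preferable if one had mere continuity of $\Psi$, and that is not the situation here.

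Two small remarks. First, the paper's Grönwall uses the time-integrable kernel $TV_v(f_2^\pm(\tau),\mathcal{O})$, so it needs only $|\partial_v f_2^\pm|(\Omega_{T_1})<\infty$ rather than your $\sup_s TV(f_2^\pm(s),\mathcal{O})$; the latter is nevertheless provided by Lemma~\ref{m_BV_estimate_main_terms}. Second, the weight actually produced in your moment estimate is $\int|v|\,d|\partial_v f_2^\pm(s)|$, not the total variation alone. It satisfies $\int|v|\,d|\partial_v f_2^\pm(s)|\leq TV(|v|f_2^\pm(s),\mathcal{O})+\|f_2^\pm(s)\|_{L^1(\mathcal{O})}$, so this is harmless.
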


\begin{proof}
Let $u_1,u_2$ be in $K_{M,T_1}$ and $(f_1^+,f_1^-,E_1)$ and $(f_2^+,f_2^-,E_2)$ the corresponding solutions of the Vlasov-Poisson system.
We set $h^\pm := f_1^\pm-f_2^\pm$, $\rho_h^\pm: = \int_\mathbb{R}h^\pm dv$, $j_h^\pm := \int_\mathbb{R} vh^\pm dv$ and  $\psi = E_{1}-E_{2}.$ Then, we have for $(t,x) \in Q_{T}$,

 $$
    \psi(t,x) = -\dfrac{1}{L\epsilon_{0}}\int_{0}^{L}\int_0^s (q_+\rho_h^++q_-\rho_h^-)(t,u) duds + \frac{1}{\epsilon_{0}} \int_0^x(q_+\rho_h^++q_-\rho_h^-)(t,u) du + \frac{u_1(t)-u_2(t)}{L}.
$$

The functions $h^{+}$ and $h^{-}$ satisfy a transport equation with a source term:

\begin{equation}
    \partial_t h^\pm + v \partial_x h^\pm +\gamma_+ E_1 \partial_v h^\pm = -\gamma_\pm \psi \partial_v f_2^\pm  \textnormal{ in }  \mathcal{M}(\Omega_{T_1}),\\ 
\end{equation}

\noindent where the initial and boundary conditions vanish. Let us estimate $\|h^\pm(t)\|_{L^1(\mathcal{O})}$. By an $L^1$ estimates (Proposition \ref{proposition/streamtube}), and using the chain rule (Theorem \ref{theoreme/chainrule}), we provide the inequality:

    \begin{eqnarray*}
        \|h^\pm(t)\|_{L^1(\mathcal{O})} & \leq  & -\gamma_\pm \int_{\Omega_{T_1}} F(\overline{h}^\pm,h^\pm_+,h^\pm_-)\psi d(\partial_vf_2^\pm),
    \end{eqnarray*}
    
   \noindent  where $F(\overline{h}^\pm,h^\pm_+,h^\pm_-)$ is a functional bounded by 1 in module. The measure $d(\partial_vf_2^\pm)$ (see \cite{ambrosio}, section 3.11) can be disintegrated as $d\tau\otimes d(\partial_vf_2^\pm(\tau))$, thus we can write 
   
    \begin{eqnarray*}
        \|h^\pm(t)\|_{L^1(\mathcal{O})} & \leq   & |\gamma_\pm| \int_0^t\sup_{0\leq x \leq L}|\psi(\tau,x)| TV_v(f_2^\pm(\tau),\mathcal{O}) d\tau.
    \end{eqnarray*}
    
Using directly the expression of $\psi$, we have the bound for $\tau \in [0,t]$:

    $$
    \sup_{0\leq x \leq L}|\psi(\tau,x)| \leq 2\frac{q_++|q_-|}{\epsilon_{0}}\Big( \|h^+(\tau)\|_{L^1(\mathcal{O})} +  \|h^-(\tau)\|_{L^1((\mathcal{O})}\Big) + \frac{|u_1(\tau)-u_2(\tau)|}{L}.
    $$
    
Inserting this bound in the previous inequality yields

    \begin{eqnarray*}
        \|h^\pm(t)\|_{L^1(\mathcal{O})} & \leq   & 2|\gamma_\pm| \int_0^t\dfrac{(q_++|q_-|)}{\epsilon_{0}}( \|h^+(\tau)\|_{L^1(\mathcal{O})} +  \|h^-(\tau)\|_{L^1(\mathcal{O})})  TV_v(f_2^\pm(\tau),\mathcal{O}) d\tau \\
        & + & \frac{|\gamma_\pm|}{L} \int_0^t |u_1(\tau)-u_2(\tau)|  TV_v(f_2^\pm(\tau),\mathcal{O}) d\tau.
    \end{eqnarray*}
    
Summing the inequalities obtained for each species, we obtain 

     \begin{eqnarray*}
        \|h^+(t)\|_{L^1(\mathcal{O})} +  \|h^-(t)\|_{L^1(\mathcal{O})} & \leq   &  2\dfrac{(q_++|q_-|)}{\epsilon_{0}}\int_0^t( \|h^+(\tau)\|_{L^1(\mathcal{O})} +  \|h^-(\tau)\|_{L^1(\mathcal{O})}) k(\tau) d\tau \\
        & + & \frac{1}{L} \int_0^t |u_1(\tau)-u_2(\tau)|  k(\tau)  d\tau,
    \end{eqnarray*}
    
\noindent where $k(\tau) =  |\gamma_+|TV_v(f_2^+(\tau),\mathcal{O})+ |\gamma_-|TV_v(f_2^-(\tau),\mathcal{O})$. Using Grönwall and Hölder inequalities allow us to conclude that for $t \in [0,T_{1}]$

     \begin{eqnarray*}
        \|h^+(t)\|_{L^1(\mathcal{O})} +  \|h^-(t)\|_{L^1(\mathcal{O})} & \leq   & \frac{1}{L}\|u_1-u_2\|_{L^\infty(0,t)}(|\gamma_+| |\partial_vf_2^+(\Omega_{T_1})| +   |\gamma_-| |\partial_vf_2^-(\Omega_{T_1})| ) \\
         & \times & e^{2\frac{q^{+} + |q^-|}{\epsilon_{0}}(|\gamma_+| |\partial_vf_2^+(\Omega_{T_1})| +   |\gamma_-| |\partial_vf_2^-(\Omega_{T_1})| )}.
    \end{eqnarray*}
    
    So there exists a  constant $C_{T_1}>0$,  such that for every $t \in [0,T_{1}],$
    \begin{equation}
    \|h^+(t)\|_{L^1(\mathcal{O})} +  \|h^-(t)\|_{L^1(\mathcal{O})} \leq    C_{T_1} \|u_1-u_2\|_{L^\infty(0,t)}.
    \label{equation/1}        
    \end{equation}
    
We also need an estimate on the function $vh^{\pm}$. Using Proposition \ref{proposition/streamtube} applied to  $|vh^\pm|$ gives the bound
\begin{eqnarray*}
        \|vh^\pm(t)\|_{L^1(\mathcal{O})} & \leq  &  \int_{\Omega_t} \begin{pmatrix}
            1\\
            v\\
            \gamma_+ E_1
        \end{pmatrix}\cdot d(D(|v||h^\pm|)).
\end{eqnarray*}

Using the Leibniz rule to expand $D(|v| |h^\pm|)$ and the chain rule, we find that 
$$
D(|v| |h^\pm|) = |v| F(\overline{h}^\pm,h^\pm_+,h^\pm_-) Dh^\pm + |h^\pm| sgn(v) \overrightarrow{e_v} dtdxdv,
$$
where $F(\overline{h}^\pm,h^\pm_+,h^\pm_-)$ is a functional bounded by 1 in module. We then write 
\begin{eqnarray*}
        \|vh^\pm(t)\|_{L^1(\mathcal{O})} & \leq  & -\gamma_\pm \int_{ \Omega_t}F(\overline{h}^\pm,h^\pm_+,h^\pm_-)\psi |v|d(\partial_vf_2^\pm) + \gamma_\pm \int_{\Omega_t}|h^\pm|sgn(v)E_1 dtdxdv.
\end{eqnarray*}
We then disintegrate the measure $d(\partial_{v}f_{2}^{\pm})$ (Theorem \ref{theoreme/desintegration}) and use a crude upper bound to get for $t \in [0,T_{1}]$

    \begin{equation}
        \|vh^\pm(t)\|_{L^1(\mathcal{O})}  \leq   |\gamma_\pm| \|\psi\|_{L^\infty(Q_t)} |v\partial_vf_2^\pm|(\Omega_{T_1}) 
        + |\gamma_\pm| t \sup_{0 \leq\tau\leq t}\|h^\pm(\tau)\|_{L^1(\mathcal{O})} \|E_1\|_{L^\infty(Q_{T_1})}.
        \label{equation/2}
    \end{equation}

Finally, we get from the expression of $\psi$ that 

    $$
    \|\psi\|_{L^\infty(Q_t)} \leq 2\dfrac{(q_++|q_-|)}{\epsilon_{0}}\sup_{0 \leq \tau\leq t}( \|h^+(\tau)\|_{L^1(\mathcal{O})} +  \|h^-(\tau)\|_{L^1(\mathcal{O})}) + \frac{\|u_1-u_2\|_{L^\infty(0,t)}}{L}.
    $$
    
Therefore, combining equations \eqref{equation/1} and \eqref{equation/2}, we find that there exists a $C_{T_1}>0$ which depends only on $\tilde{f}^\pm,g^\pm, M,L$ and $T_1$ such that  for any pair $(u_{1},u_{2})$ of $K_{M,T_{1}}$ we have

\begin{equation}
    \|\Psi(u_1)-\Psi(u_2)\|_{L^\infty(0,T_1)}\leq C_{T_1} \|u_1-u_2\|_{L^\infty(0,T_1)}. \label{m_Lipschitz_estimate_PSI}
\end{equation}

By Lemma \ref{m_Invariant_Set_PSI}, we have that $\Lambda$ is continuous on $B_{M,T_{1}}$. Moreover the set $B_{M,T_{1}}$ is compact for the $L^{\infty}(0,T_{1})$ topology. So, the Schauder fixed point theorem states that $\Psi$ has a fixed point, thus existence of a solution on $[0,T_{1}]$ is proven. To show uniqueness, let us consider two fixed points $u_1$ and $u_2$. Keeping the same notations as before, we get by definition of $u_1,u_2$ that for $t \in [0,T_{1}]$
\begin{eqnarray*}
\|u_1-u_2\|_{L^\infty(0,t)} &\leq &  \dfrac{q_++|q_-|}{\epsilon_{0}}\int_0^t \|vh^+(\tau)\|_{L^1(\mathcal{O})} + \|vh^-(\tau)\|_{L^1(\mathcal{O})}d\tau \\
&\leq &  \dfrac{C_{T_1}}{\epsilon_{0}}\int_0^t \|u_1-u_2\|_{L^\infty(0,\tau)}d\tau.
\end{eqnarray*}

Once again, we conclude by Gronwall inequality that $u_1=u_2$. This implies, by uniqueness for the 
 Vlasov-Poisson system, that $h^\pm=0$ and $\psi=0$. 
\end{proof}
We would like to repeat the previous argument on an interval $[T_1,T_2]$ for a suitable $T_2 \leq T$, and so on. This must be done carefully: indeed, the initial value $\beta=\phi(0,L)$ was given, but on $[T_1,T_2]$ this value is replaced by $\phi(T_1,L) = \beta + \dfrac{1}{\varepsilon_0} \int_0^{T_{1}}\int_{0}^{L} (q_+j^++q_-j^-)(\tau,x)dxd\tau$. Repeating the estimates of Lemma \ref{m_Invariant_Set_PSI} would yield constants that grows polynomially with $T_{1}$. Thus, it would not be not trivial to fix the constants $C, T_2$ and $M$ in Lemma \ref{m_Invariant_Set_PSI}. We rather use an energy estimate to get the desired bound on $\phi(t,L)$. The cornerstone, is the following energy estimates, which is obtained thanks to our nonlinear boundary condition \eqref{m_potential_bc}. Namely, the difference of potential is always less than the energy of the system. We define the energy of the system for $t \in [0,T_{1}]$ by
\begin{equation}
    \mathcal{E}(t) = \frac{m^{+}}{2} \int_{\mathcal{O}} v^2f^{+} dxdv + \frac{m^{-}}{2} \int_{\mathcal{O}} v^2f^{-}dxdv + \frac{\epsilon_{0}}{2}\int_{0}^{L}  | \partial_{x} \phi |^2 dx.
\end{equation}

\begin{proposition}[Bounds on the floating potential]

    Assume that $|v|^kf^\pm\in BV(\Omega_{T_1})$, for $k=0,1,2,3$. Then $\mathcal{E}\in BV((0,T_1))$, and 
    
    \begin{equation*}
    \dfrac{d}{dt}\mathcal{E}(t)  = \sum_\pm  -\dfrac{m_\pm}{2} \int_\mathbb{R} v^3 Tf^\pm(t,L,v)dv +  \dfrac{m_\pm}{2} \int_\mathbb{R} v^3 Tf^\pm(t,0,v)dv .
\end{equation*}
The energy obeys the bound for $t\in (0,T_1),$
\begin{equation}\label{m_floating_p_bound_1}
    \mathcal{E}(t)\leq \mathcal{E}(0) + \dfrac{m_+}{2} \|v^3g^+\|_{L^1((0,T_1)\times\mathbb{R}^+)}+ \dfrac{m_-}{2} \|v^3g^-\|_{L^1((0,T_1)\times\mathbb{R}^+)}.
\end{equation}

Eventually, the energy controls the floating potential:

\begin{equation}\label{m_floating_p_bound_2}
 \sqrt{\dfrac{2L}{\epsilon_{0}}\mathcal{E}(t) }\geq |\phi(t,L)|.
\end{equation}

\label{proposition/energieestimation}

\end{proposition}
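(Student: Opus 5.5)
The plan is to compute the time derivatives of the kinetic and field parts of $\mathcal{E}$ separately, in the weak (distributional in $t$) sense. The key point is that the floating potential condition \eqref{m_potential_bc} makes the nonlocal terms of the Ampère law of Lemma \ref{lemme/maxwellampere} cancel. Throughout, $E=-\partial_x\phi$ and $m_\pm\gamma_\pm=q_\pm$.

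\emph{Kinetic part.} Fix $\theta\in C_0^\infty((0,T_1))$ and apply Theorem \ref{theoreme/regularite_trace} to $f^\pm$ with weight $v^2$ and test field $\theta(t)\chi_R(v)(1,v,\gamma_\pm E)$. In practice $E$ is first approximated by $C^1$ fields, as in the proof of Proposition \ref{proposition/streamtube}. The interior measure term vanishes because $(1,v,\gamma_\pm E)\cdot Df^\pm=0$, as shown in the proof of Proposition \ref{proposition:regularite_vlasov}. We then let $R\to\infty$ by dominated convergence. The hypothesis $|v|^3f^\pm\in BV(\Omega_{T_1})$ gives $|v|^3Tf^\pm\in L^1(\partial\Omega_{T_1})$, which justifies the $v^3$ boundary fluxes, and $\partial_v\chi_R\to0$ kills the cutoff error. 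This yields, in $\mathcal D'(0,T_1)$,
\[
\frac{d}{dt}\frac{m_\pm}{2}\int_{\mathcal O}v^2f^\pm
=\frac{m_\pm}{2}\int_{\mathbb R}v^3\bigl(Tf^\pm(t,0,v)-Tf^\pm(t,L,v)\bigr)dv+q_\pm\int_0^L E\,j^\pm\,dx .
\]
The last term comes from $\int \gamma_\pm E\,\partial_v(v^2)f^\pm=2\gamma_\pm\int E\,vf^\pm$.

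\emph{Field part.} By Lemma \ref{lemme/maxwellampere} with $u(t)=\phi(t,L)$, together with $\epsilon_0u'(t)=\int_0^L(q_+j^++q_-j^-)\,dx$ from \eqref{m_potential_bc}, the last two terms cancel and we get $\epsilon_0\partial_tE=-(q_+j^++q_-j^-)$. Since $E\in W^{1,\infty}(Q_{T_1})$, it follows that $\frac{d}{dt}\frac{\epsilon_0}{2}\int_0^L E^2\,dx=-\int_0^L E\,(q_+j^++q_-j^-)\,dx$. Summing with the kinetic identity, the $E\,j$ terms cancel and we obtain the stated formula for $\mathcal E'$. The right-hand side lies in $L^1(0,T_1)$ by the trace integrability, so $\mathcal E\in W^{1,1}\subset BV$.

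\emph{Energy bound.} The boundary data are used as follows: $Tf^\pm(t,L,v)=0$ for $v<0$, and $Tf^\pm(t,0,v)=g^\pm$ for $v>0$. The remaining pieces are $-\int_{v>0}v^3Tf^\pm(t,L,v)\,dv\le0$ and $\int_{v<0}v^3Tf^\pm(t,0,v)\,dv\le0$, both by positivity of $f^\pm$. Hence $\mathcal E'\le\sum_\pm\frac{m_\pm}{2}\int_{\mathbb R^+}v^3g^\pm\,dv$, and integrating in time gives \eqref{m_floating_p_bound_1}.

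\emph{Potential bound.} Since $\phi(t,0)=0$, we have $\phi(t,L)=\int_0^L\partial_x\phi\,dx$. Cauchy--Schwarz then gives $|\phi(t,L)|\le\sqrt L\,\|\partial_x\phi(t)\|_{L^2}\le\sqrt{2L\mathcal E(t)/\epsilon_0}$, which is \eqref{m_floating_p_bound_2}.

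\emph{Main obstacle.} I expect the hardest step to be the rigorous derivation of the kinetic identity with unbounded $v^2$ weights and BV (not smooth) $f$. This means checking that the trace theorem applies to $v^2f^\pm$ with the field $(1,v,\gamma_\pm E)$, and that the transport relation $(1,v,\gamma_\pm E)\cdot Df^\pm=0$ survives multiplication by $v^2$. By contrast, the cancellation via Ampère plus the floating condition is exact and straightforward.
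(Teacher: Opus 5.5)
Your proposal is correct and follows essentially the paper's argument. You differentiate the kinetic energy by integration by parts with boundary traces; Lemma~\ref{lemme/maxwellampere} together with the floating condition \eqref{m_potential_bc} gives $\epsilon_0\partial_t E=-(q_+j^++q_-j^-)$, so the $E\,(q_+j^++q_-j^-)$ terms cancel; and the bounds follow from the sign of the outgoing traces and Cauchy--Schwarz (the paper's Jensen step).

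The only difference is cosmetic. You test directly with the transport field $\theta\chi_R(1,v,\gamma_\pm E)$ and use $(1,v,\gamma_\pm E)\cdot Df^\pm=0$, whereas the paper tests $v^2f^\pm$ against $(\theta,0,0)$ and substitutes the measure-valued Vlasov equation before integrating by parts in $x$ and $v$. Since your field depends on $v$ and has a $v$-component, the tool you are really using is the cut-off Green formula of Theorem~\ref{divergence_formula}, as in Proposition~\ref{proposition/streamtube}, rather than Theorem~\ref{theoreme/regularite_trace} as stated.
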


\begin{proof}

We first prove that $\mathcal{E}$ is in $BV(0,T_1)$. Let $\phi\in C_0^\infty(0,T_1)$. First, we focus on the kinetic energy $e^\pm(t)=\int_\mathcal{O}v^2 f^\pm(t,x,v) dxdv$. By integration by parts (see Theorem \ref{theoreme/regularite_trace}), the Vlasov equation and the Leibniz rule ($v^2\partial_vf = \partial_v(v^2f)-2vf dtdxdv$)  we have that



\begin{eqnarray*}
 \int_0^T e^\pm(t) \partial_t\phi(t) dt  &=& \int_{\Omega_{T_1}} v^2 f^\pm \Div (\phi(t),0,0) dtdxdv  \\
					& =&  -\int_{\Omega_{T_1}} v^2 \phi(t) d(-v\partial_x f^\pm -\gamma_\pm \partial_v f^\pm) \\
					& = & \int_{\Omega_{T_1}} v^3 \phi(t) d(\partial_x f^\pm) + \gamma_\pm \int_{\Omega_{T_1}} v^2 E \phi(t) d(\partial_v f^\pm ) \\
					& = & \int_0^{T_1}\phi(t)\int_\mathbb{R} v^3Tf^\pm(t,L,v)dv dt - \int_0^{T_1}\phi(t)\int_\mathbb{R}v^3 Tf^\pm(t,0,v)dv dt \\
					&-& 2\gamma_\pm \int_{\Omega_{T_1}}Evf^\pm \phi(t) dtdxdv.
\end{eqnarray*}

Thus, in the distributional sense, 

\begin{multline}
\sum_\pm \frac{m_\pm}{2} \frac{d}{dt}e^\pm(t) = \sum_\pm \left(-\frac{m_\pm}{2}\int_\mathbb{R} v^3Tf^\pm(t,L,v)dv + \frac{m_\pm}{2} \int_\mathbb{R}v^3 Tf^\pm(t,0,v)dv \right) \\+\int_0^L E (q_+j^+ + q_- j^-) dx.
\end{multline}

Remember that integrals of the trace are well-defined by Theorem \ref{theoreme/regularite_trace}.  Using Ampère equation, the last term can be rewritten as $-\varepsilon_0 \int_0^L \partial_t\partial_x\phi \partial_x\phi dx$. The potential energy is easier to work with, and by classic theory of Sobolev spaces, its derivative is exactly $\varepsilon_0 \int_0^L \partial_t\partial_x\phi \partial_x\phi dx$. By cancellation of the latter terms, we get that

    \begin{equation*}
    \dfrac{d}{dt}\mathcal{E}(t)  = \sum_\pm  -\dfrac{m_\pm}{2} \int_\mathbb{R} v^3 Tf^\pm(t,L,v)dv +  \dfrac{m_\pm}{2} \int_\mathbb{R} v^3 Tf^\pm(t,0,v)dv .
\end{equation*}

The bound on $\mathcal{E}$ is found by integration. Indeed, we have that 

    \begin{equation}
    \mathcal{E}(t)-\mathcal{E}(0)  =  \sum_\pm -\dfrac{m_\pm}{2} \int_0^{t}\int_\mathbb{R} v^3 Tf^\pm(s,L,v)dvds +  \dfrac{m_\pm}{2}\int_0^{t} \int_\mathbb{R} v^3 Tf^\pm(s,0,v)dv ds .
\end{equation}

By simple sign considerations and by using the expression of the trace of $f$ on the incoming part of the boundary, we find the desired bound. Finally, the last part stems from the triangle and Jensen inequalities:

    $$
\mathcal{E}(t) \geq \dfrac{\epsilon_{0}}{2}\int_{0}^{L} |\partial_x\phi|^2 dx \geq \dfrac{\epsilon_{0}}{2L}\left(\int_{0}^{L} |\partial_x\phi| dx\right)^2 \geq \dfrac{\epsilon_{0}}{2L}\left(\int_{0}^{L} \partial_x\phi dx \right)^2 = \dfrac{\epsilon_{0}}{2L} \phi(t,L)^2.
$$

\end{proof}

The bounds \eqref{m_floating_p_bound_1}, \eqref{m_floating_p_bound_2} show that $\phi(T_{1},L)$ only depends on the data. So we can repeat the previous estimates of Lemma  \ref{m_Invariant_Set_PSI}, \ref{m_Lipschitz_PSI} on $\Psi$. Thus, the solution can be extended to $[0,T]$ using the Schauder fixed point theorem. It concludes the proof of the Theorem \ref{MAIN_RESULT}.

\section{Conclusion}

This work shows the well-posedness of the Vlasov-Poisson-Ampère system, by studying the Vlasov-Poisson system with floating potential at the wall. Several extensions could be addressed in the future. First, regularity  may be relaxed. Indeed, we believe that the BV regularity is not necessary as it is only used to establish energy and $L^1$ estimates.  A second extension may study the long-time asymptotic of this system. This would be particularly interesting as stationary plasma sheath models are widely used in physics.  
A major difficulty to show the nonlinear stability of this system by the relative entropy method (see \cite{BENABDALLAH2000867}) is that the boundary condition breaks the structure needed to get dissipation and convexity of the relative entropy functional. We believe that a trajectorial approach as in \cite{Badsi-25_mybib} should work.

\section{Appendix}
\subsection{Proof of Theorem \ref{theoreme:trace}}
\begin{proof}
Let us define the trace as before. Indeed, $f$ lies in $BV((0,T)\times(0,L)\times(-B,B))$ for every $B\geq 0$, so its trace is defined for almost every  $(t,x,v)\in \partial \Omega_T$ by  

$$
 Tf(t,x,v) := \lim\limits_{R\to 0^+} \fint_{\Omega_T\cap B((t,x,v),R)} f(t',x',v') dt'dx'dv'. 
$$

Remark that the trace is nonnegative. We first  show that   $T(|v|^k f) = |v|^k Tf$. Indeed, we have that for a fixed $v$, 

\begin{multline}
\left|\fint_{\Omega_T\cap B((t,x,v),R)} (|v'|^k-|v|^k) f(t',x',v') dt'dx'dv'\right| = \\ \left|\sum_{s=0}^{k-1} \fint_{\Omega_T\cap B((t,x,v),R)} (|v'|-|v|) |v'|^{k-1-s}|v|^s f(t',x',v') dt'dx'dv'\right| .
\end{multline}

This last term is easily bounded. Indeed, $f\in L^\infty(\Omega_T)$, $\left||v'|-|v|\right|\leq R$, $|v'|\leq |v|+R$ and $v$ is fixed. There exists a constant $C_v$ depending on $v$ such that

$$
\fint_{\Omega_T\cap B((t,x,v),R)} (|v'|^k-|v|^k) f(t',x',v') dt'dx'dv'  \leq C_v \|f\|_{L^\infty(\Omega_T)} R .
$$

The bound goes to $0$ as $R\to 0$. We thus conclude by using the last estimate, the definition of the trace, and by rearranging the difference $(T(|v|^k f) -|v|^k Tf)(t,x,v)$ as following:

\begin{multline}
(T(|v|^k f) -|v|^k Tf)(t,x,v) = T(|v|^k f)(t,x,v) - \fint_{\Omega_T\cap B((t,x,v),R)} |v'|^k f dtdxdv \\ +  \fint_{\Omega_T\cap B((t,x,v),R)} |v'|^k f dtdxdv -  |v|^k \fint_{\Omega_T\cap B((t,x,v),R)}f dtdxdv \\+ |v|^k \fint_{\Omega_T\cap B((t,x,v),R)} f dtdxdv - |v|^k Tf(t,x,v).
\end{multline}

Now,  consider  $\phi_1,\phi_2\in C^\infty(Q_T)$ and $\psi_n$ a sequence of smooth functions converging uniformly toward $\psi$ on $[-R,R]$, with $R>0$. We apply the integration by parts formula to the function $f$ and the field $\chi_R(v)\psi_n(v)(\phi_1,\phi_2,0)$ to obtain

$$
\int_{\Omega_T} f\chi_R \psi_n \Div(\phi_1,\phi_2,0) dtdxdv = \int_{\partial\Omega_T} Tf \chi_R\psi_n(\phi_1,\phi_2,0)\cdot \overrightarrow{n} d\mathcal{H}^2-\int_{\Omega_T}\chi_R \psi_n (\phi_1,\phi_2,0)\cdot d(Df).
$$

By dominated convergence, every term converges as $n\to\infty$, and we get 

\begin{multline}
\int_{\Omega_T} f \psi \chi_R  \Div(\phi_1,\phi_2,0) dtdxdv = \int_{\partial\Omega_T} Tf \psi \chi_R(\phi_1,\phi_2,0)\cdot \overrightarrow{n} d\mathcal{H}^2\\-\int_{\Omega_T} \psi \chi_R(\phi_1,\phi_2,0)\cdot d(Df).
\end{multline}

We now show that $|v|^k Tf(t,L,v)\in L^1((0,T)\times\mathbb{R})$ if $|v|^k f\in BV(\Omega_T)$ (the other faces are similar). By applying the formula above to $\phi_1=0,\phi_2(x)=x, \psi(v)=|v|^k$, we get the estimate 

$$
\int_0^T\int_{\mathbb{R}} |v|^k \chi_R Tf(t,L,v) dtdv \leq \| v^k f\|_{L^1(\Omega_T)} + L|\partial_x(|v|^k f)|(\Omega_T),
$$

which implies the integrability of the trace  by monotone convergence.  Knowing this, we can take the limit $R\to +\infty$ (with $\psi(v)=v^k$) in the integration by part formula to obtain

\begin{multline}
\int_{\Omega_T} f v^k  \Div(\phi_1,\phi_2,0) dtdxdv = \int_{\partial\Omega_T} Tf v^k(\phi_1,\phi_2,0)\cdot \overrightarrow{n} d\mathcal{H}^2\\-\int_{\Omega_T} v^k(\phi_1,\phi_2,0)\cdot d(Df).
\end{multline}

Remark that the dominated convergence applies since $|v|^kTf\in L^1(\partial\Omega_T)$, and that the last integrand can be rewritten as $\chi_R(\phi_1,\phi_2,0)\cdot d(D(v^kf))$, with $D(v^kf)$ a bounded Radon measure and $\chi_R(\phi_1,\phi_2,0)$ continuous and bounded.

\end{proof}

\bibliographystyle{plain}
\bibliography{biblio}

\end{document}